\tikzset{
    labl/.style={anchor=south, rotate=90, inner sep=.5mm}
}
\begin{document}

\title{On the local-to-global principle for zero-cycles on self products of elliptic curves with CM}
\author{Michael Wills\footnote{Department of Mathematics, University of Virginia. 401 Kerchof Hall, 141 Cabell Dr., Charlottesville, VA 22904, USA. Email: \href{mailto:muf5cz@virginia.edu}{muf5cz@virginia.edu}}}
\date{}
\maketitle

\begin{abstract}
    For a smooth projective variety \(X\) defined over a global field \(K\), one can form a notion of Weak Approximation for the Chow group of zero-cycles of \(X\). There exists a Brauer-Manin obstruction to Weak Approximation here akin to that for rational points. However, unlike for rational points, it is conjectured that this obstruction is the only one; early versions of this conjecture date back to the 1980's \cite{Colliot-Thelene_Sansuc_1981}, \cite{Kato_Saito_86}. In this paper, we provide evidence for this when \(X\) is the self-product of an elliptic curve with complex multiplication. For some varieties of this form, we construct infinitely many extensions \(L/K\) for which the base change \(X\times_K \Spec L\) satisfies a local-to-global principle for a fixed prime \(p\). We do this via explicitly constructing global zero-cycles, and our results have applications over all but two of the complex multiplication fields.
\end{abstract}

\section{Introduction}
Let \(K\) be a number field with set of places \(\Omega\), and for each \(v\in \Omega\) let \(\iota_v:K\hookrightarrow K_v\) denote the embedding of \(K\) into the local field corresponding to \(v\).
Let \(X\) be a smooth projective geometrically connected variety over \(K\), and consider its set \(X(K)\) of \(K\)-rational points. One approach to understanding these points is consider the diagonal embedding of \(X(K)\) into the set of \textit{adelic points} of \(X\) defined by \(X(\mathbb{A}_K) = \prod_{v\in \Omega} X_v(K_v)\), where for each \(v\), \(X_v = X \times_K \Spec K_v\) denotes the base change. In particular, one can ask:
\begin{enumerate}
    \item does \(X\) satisfy the \textit{Hasse Principle}, i.e.\ does the existence of an adelic point for \(X\) imply existence of a \(K\)-rational point for \(X\), and if so
    \item does \(X\) satisfy \textit{Weak Approximation}, i.e.\ is \(X(K)\) dense in \(X(\mathbb{A}_K)\) (in the appropriate topological sense)?
\end{enumerate}
One obstruction to both of these arises from the cohomological Brauer group \(\Br(X) = H^2(X, \mathbb{G}_m)\); more precisely, in \cite{Manin_1971} a pairing of sets \(X(\mathbb{A}_K) \times \Br(X) \to \mathbb{Q}/\mathbb{Z}\) is constructed under which points in \(X(K)\) pair trivially with all elements of \(\Br(X)\). The left (pointed set) kernel of this pairing then defines a closed intermediate set \[X(K) \subseteq X(\mathbb{A}_K)^{\Br} \subseteq X(\mathbb{A}_K),\] and we say that the Brauer-Manin obstruction \textit{explains the failure of the Hasse Principle} (or \textit{Weak Approximation}) if \(X(\mathbb{A}_K) \neq \emptyset\) but \(X(\mathbb{A}_K)^{\Br} = \emptyset\) (or if \(X(\mathbb{A}_K)^{\Br}\neq X(\mathbb{A}_K)\) respectively). However, this does not capture all ways that either the Hasse Principle or Weak Approximation may fail; see \cite{Skorobogatov_1999} for the first explicit example, and \cite[Chapter 8]{Poonen_2017} for an overview of further refinements to this question.

Similar questions arise for the \textit{Chow group of zero-cycles} \(\CH_0(X)\) of \(X\), a ``linearization'' of the closed points of \(X\) foundational to intersection theory \cite{Fulton_1998}. Namely, one can construct an adelic analogue to \(\CH_0(X)\), which for \(K\) totally imaginary is given by
\[
    \CH_{0,\mathbb{A}}(X) = \prod_{v\in\Omega_f} \CH_0(X_{K_v})
\]
where \(\Omega_f\subset \Omega\) denotes the finite places (see \autoref{ssec:filtration_on_CH} for the general definition), together with a diagonal map \(\Delta:\CH_0(X) \to \CH_{0,\mathbb{A}}(X)\).
The Brauer-Manin pairing on rational points then extends to an abelian group pairing \(\CH_{0,\mathbb{A}}(X) \times \Br(X) \to \mathbb{Q}/\mathbb{Z}\), and the image of \(\Delta\) is contained in the left kernel of this pairing \cite[p. 57]{Colliot-Thelene_1995}. Since \(\Br(X)\) is torsion, this extends to a pairing \(\widehat{\CH_{0,\mathbb{A}}(X)} \times \Br(X) \to \mathbb{Q}/\mathbb{Z}\) (where for any abelian group \(A\), we denote by \(A/n\) the quotient \(A/nA\), and by \(\widehat{A}\) the profinite completion \(\limi_n A/n\)), giving rise to a complex
\begin{align}\label{eqn:global_CH_complex}
    \widehat{\CH_0(X)} \overset{\Delta}{\rightarrow} \widehat{\CH_{0,\mathbb{A}}(X)} \overset{\varepsilon}{\rightarrow} \Hom(\Br(X), \mathbb{Q}/\mathbb{Z}).
\end{align}

We then have the following Conjecture due to Colliot-Th\'el\`ene:

\begin{conj*}[E]\label{conj:E}
For a smooth projective geometrically connected variety \(X\) defined over a number field, the complex (\ref{eqn:global_CH_complex}) is exact.
\end{conj*}

As discussed in \cite{Wittenberg_2012}, this conjecture implies both that the Brauer-Manin obstruction is the only obstruction to the Hasse Principle for zero-cycles of degree 1, as well as a version of Weak Approximation for zero-cycles (now in a group-theoretic rather than topological sense). Conjecture (E) goes back to conjectures in \cite{Colliot-Thelene_Sansuc_1981} on geometrically rational surfaces and in \cite{Kato_Saito_86} on higher class field theory; see \cite{Colliot-Thelene_1995}, \cite{Wittenberg_2012}, and \cite{Colliot_Thelene_Skorobogatov_2021} for a fuller history. The form given above is due to \cite{van_Hamel_03} and \cite{Wittenberg_2012}.

Little is known about Conjecture (E) in general, especially unconditionally. For \(X = \Spec K\), Conjecture (E) follows from the fundamental short exact sequence of global class field theory. For \(X\) a curve, Conjecture (E) was proved in \cite{Colliot-Thelene_1999}, conditional on finiteness of the Shafarevich-Tate group of the Jacobian of \(X\). In higher dimensions, there are conditional results given in \cite{Liang_2013} (building upon work in \cite{Colliot-Thelene_Sansuc_Swinnerton-Dyer_1987}, \cite{Colliot-Thelene_Sansuc_Swinnerton-Dyer_1987a}), which show Conjecture (E) for rationally connected varieties under the hypothesis that the Brauer-Manin obstruction is the only one for Weak Approximation of rational points after any finite base change. A similarly conditional result on the Hasse Principle for zero-cycles of odd degree on products of Kummer varieties may be found in \cite{Balestrieri_Newton_2021}. For Weak Approximation on K3 surfaces, a weaker ``fixed \(n\)'' version of Conjecture (E) is shown in \cite{Ieronymou_2021}, again conditional on a deep understanding of Weak Approximation for rational points. There are no known abelian varieties of dimension at least 2 for which Conjecture (E) holds in its entirety, despite knowing that Weak Approximation for rational points holds for certain classes of such varieties (assuming finiteness of the Tate-Shafarevich group) \cite{Wang_1996}. Additionally, compatibility of Conjecture (E) is shown in \cite{Harpaz_Wittenberg_2016} with certain fibrations over \(\mathbb{P}^1\), and in \cite{Liang_2023} with products of at most one nice curve and varieties satisfying a certain condition on their Brauer group, which includes geometrically rationally connected varieties and K3 surfaces.

The goal of this paper is to provide unconditional evidence of Conjecture (E) for infinite families of varieties \(X = (E\times_K E)_L\), where \(E\) is an elliptic curve defined over an imaginary quadratic field \(K\) having complex multiplication by the full ring of integers \(\mathcal{O}_K\), and \(L/K\) is a finite extension. We will do so via explicit computations with zero-cycles rather than using the arithmetic of rational points that powers the conditional results discussed above. 

To state the nature of this evidence, recall that \(\CH_0(X)\) admits a filtration \[\CH_0(X) \supseteq F^1(X) \supseteq F^2(X),\] where \(F^1(X)\) denotes the zero-cycles of degree zero and \(F^2(X)\) is the Albanese kernel, also known as the Abel-Jacobi kernel. This filtration and its adelic analogue are compatible with the maps in the complex (\ref{eqn:global_CH_complex}), so a necessary aspect of proving Conjecture (E) is showing exactness of
\begin{align}\label{eqn:global_F2_complex}
    \widehat{F^2(X)} \overset{\Delta}{\rightarrow} \widehat{F^2_{\mathbb{A}}(X)} \overset{\varepsilon}{\rightarrow} \Hom(\Br(X), \mathbb{Q}/\mathbb{Z}).
\end{align}
In fact, by \cite[Proposition 5.6]{Gazaki_Hiranouchi_2021}, exactness of (\ref{eqn:global_F2_complex}) is equivalent to Conjecture (E) if one assumes that the Tate-Shafarevich group of \(X\) contains no non-zero divisible element.

Note next that by the Chinese Remainder Theorem, exactness of (\ref{eqn:global_F2_complex}) is equivalent to exactness of
\begin{align}\label{eqn:p_primary_F2_complex}
    \limi_n F^2(X)/p^n \overset{\Delta}{\rightarrow} \limi_n F^2_{\mathbb{A}}(X)/p^n \overset{\varepsilon}{\rightarrow} \Hom(\Br(X), \mathbb{Q}/\mathbb{Z})
\end{align}
for each prime \(p\). As it turns out, for $X=(E\times E)_L$ and \(p\geq 5\), only the places of \(L\) lying above \(p\) will contribute to the middle term; see \autoref{lem:places_away_from_p_are_trivial}. This is very useful, as at present our understanding of the local \(F^2\) groups at places of supersingular reduction for \(X\) lags behind that at places of ordinary reduction; see \cite[Section 1.2]{Gazaki_Hiranouchi_2021} for some discussion of why this is the case. Since \(E\) has complex multiplication, the behavior of \(E_L\) at places lying above a rational prime \(p\) is entirely determined by how \(p\) behaves in the extension \(K/\mathbb{Q}\), with \(p\) inert implying that the reduction is supersingular and \(p\) splitting implying that it is ordinary \cite[Theorem 13.12]{Lang_1987}. This motivates the central study of this paper, which is to find specific examples of elliptic curves \(E/K\), finite extensions \(L/K\), and primes \(p\) splitting in \(K/\mathbb{Q}\) for which we can show exactness of (\ref{eqn:p_primary_F2_complex}) for \(X = (E\times E)_L\). We are specifically interested in examples where the middle term is non-vanishing, so that exactness is not trivially satisfied. This brings us to our first result:

\begin{theorem}[cf. Theorems \ref{thm:no_inertia_nontriviality} and \ref{thm:D_equals_1_nontriviality_criterion}]\label{thm:nontriviality_proportion_summary}
Let \(K\) be an imaginary quadratic field of class number 1, let \(p\) be a prime which splits in \(K/\mathbb{Q}\), and let \(E\) be an elliptic curve over \(K\) with complex multiplication by $\mathcal{O}_K$. Assume that $p \mid |\bar{E}(\mathbb{F}_p)|$, where $\bar{E}$ denotes the reduction of $E$ at some place of $K$ above $p$. Let $X = E\times_K E$.

There exist infinitely many extensions $L/K$ for which we may construct $z\in F^2(X_L)$ such that $\Delta(z)\neq 0$, where
\[
    \limi_n F^2(X_L)/p^n \overset{\Delta}{\longrightarrow} \prod_{v\mid p} \limi_n F^2(X_{L_v})/p^n.
\]
\end{theorem}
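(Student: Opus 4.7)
The plan is to exploit the Somekawa $K$-group description of $F^2$, namely $F^2(X_L)\cong K(L;E_L,E_L)$ and similarly over each $L_v$, so that elements can be written explicitly as symbols $\{P_1,P_2\}$ of pairs of $L$-rational points of $E$, and the diagonal map sends symbols to symbols. Since $p$ splits in $K/\mathbb{Q}$ as $\pi\bar\pi$, both places $w,\bar w$ of $K$ above $p$ satisfy $K_w=K_{\bar w}=\mathbb{Q}_p$ and $E$ has ordinary reduction at each, so the task reduces to producing a \emph{global} symbol $z$ whose image at one chosen place $v$ of $L$ above $w$ is nontrivial in $F^2(X_{L_v})/p^n$ for some $n$.

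First I would fix $w$ and exploit the anomalous hypothesis $p\mid|\bar E(\mathbb{F}_p)|$ to pick a nontrivial $\bar T\in\bar E(\mathbb{F}_p)[p]$. Via the connected-étale sequence for $E[p]/K_w$ (valid because the reduction is ordinary), $\bar T$ lifts uniquely to $T\in E(K_w^{\mathrm{un}})[p]$, giving a nontrivial class in the unramified part of $H^1(L_v,E[p])$ after base change. Next, for infinitely many candidate $L/K$ I would exhibit a pair $(P_1,P_2)$ of $L$-rational points such that at the chosen $v\mid w$: (i) $P_1$ has the same reduction data as $T$, so $\delta(P_1)\in H^1(L_v,E[p])$ has nonzero unramified component, and (ii) $P_2$ lies in the formal group $\hat E(\mathfrak{m}_{L_v})$ at a depth not divisible by $p$, so $\delta(P_2)\in H^1(L_v,E[p])$ has nonzero image in the formal/ramified complement. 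The infinitude will come from varying ramification at $w$ in $L/K$ (or adding disjoint auxiliary extensions), so that distinct $L$ yield genuinely different constructions; this is where the split into the two cases \ref{thm:no_inertia_nontriviality} and \ref{thm:D_equals_1_nontriviality_criterion} is presumably needed, depending on whether $L/K$ contributes inertia at $w$.

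The element $z=\{P_1,P_2\}\in K(L;E_L,E_L)\cong F^2(X_L)$ would then be detected locally through the Galois symbol
\[
    s_p\colon F^2(X_{L_v})/p \longrightarrow H^2(L_v, E[p]\otimes E[p]),\qquad \{P_1,P_2\}\longmapsto \delta(P_1)\smile\delta(P_2).
\]
Local Tate duality together with the decomposition of $H^1(L_v,E[p])$ into unramified and formal parts (dual to each other under the Weil-pairing-induced duality) guarantees that the cup product of two classes, one in each summand, is nonzero. This forces $\{P_1,P_2\}\not\equiv 0\pmod p$ in $F^2(X_{L_v})$, and hence $\Delta(z)$ is nonzero already at level $p$ in the target.

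The main obstacle, and the technical heart of the argument, is the simultaneous execution of Step 2 and the cup-product verification: it is not hard to write down the required \emph{local} data at $v$, but one must realize $P_1$ and $P_2$ as honest $L$-points over a global field and control the global Kummer classes so that, upon localization at $v$, they land respectively in the unramified and purely ramified pieces of $H^1(L_v,E[p])$. Managing this compatibility for \emph{infinitely many} extensions $L/K$ — rather than a single one — is what splits the argument into the cases handled by Theorems~\ref{thm:no_inertia_nontriviality} and~\ref{thm:D_equals_1_nontriviality_criterion}, with the anomalous assumption providing the unramified input and the CM structure (ensuring ordinary reduction and an explicit description of $\hat E$) providing the formal-group input.
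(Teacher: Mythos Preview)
Your broad strokes are right — the Raskind--Spie\ss\ isomorphism $F^2(X_L)\cong K_2(L;E_L)$ together with the Galois symbol $s_p$ is indeed the engine — but the concrete implementation diverges from the paper in several places, and two of your guesses are actually wrong.

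First, the decomposition you want to exploit is inverted. In the paper (Theorem~\ref{thm:K2_mod_p_structure}), one splits $E/p \cong \widehat{E}/p \oplus [E/\widehat{E}]/p$ as Mackey functors and observes that in the Galois symbol \emph{only the formal $\times$ formal piece survives}; the mixed formal/\'etale terms and the \'etale $\times$ \'etale term vanish in $K_2/p$ (by results of Gazaki--Leal and Raskind--Spie\ss). So your plan to cup an unramified class against a ramified one in $H^1(L_v,E[p])$ does not detect anything in $K_2/p$: what matters is the Hilbert symbol $(\delta(\widehat{P}),\delta(\widehat{Q}))_\zeta$ on the \emph{formal} components of both entries. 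Concretely, the paper takes one entry to be a nonzero $A\in E[\pi]=\widehat{E}[p]$ (so $\widehat{A}=A$ automatically) and then uses the valuative criterion of Lemma~\ref{lem:nontriviality_via_valuations}: the symbol $\{A,P\}$ is nonzero mod $p$ precisely when $w(\widehat{A})+w(\widehat{P})=p$, which forces $u(\widehat{P})=1$ over the base.

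Second, your reading of the case split and of the infinitude mechanism is off. Theorems~\ref{thm:no_inertia_nontriviality} and~\ref{thm:D_equals_1_nontriviality_criterion} are \emph{not} distinguished by inertia in $L/K$; the paper always takes $L=F(E[\pi])$ with $v$ splitting completely in $F/K$, so $L_w/\mathbb{Q}_p$ is totally ramified of degree $p-1$, and the Remark after Lemma~\ref{lem:nontriviality_via_valuations} explicitly says the argument \emph{breaks} if there is any inertia. The two theorems instead split on whether $|\bar E(\mathbb{F}_p)|=p$ or $|\bar E(\mathbb{F}_p)|=2p$ (the latter occurring only for $D=1$, $p=5$). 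Likewise, the infinitely many $L$ do not come from varying ramification: they come from varying the intermediate field $F=K(\sqrt{f(b)})$ over $b$ in a $v$-adically open subset of $\mathcal{O}_K$ (the na\"ive quadratic points $(b,\sqrt{f(b)})$), with Siegel's theorem guaranteeing that all but finitely many $b$ give $F\neq K$. The criteria of Theorems~\ref{thm:no_inertia_nontriviality} and~\ref{thm:D_equals_1_nontriviality_criterion} then amount to explicit congruences on $b$ modulo $p^2$ ensuring $u(\widehat{P})=1$.
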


See \autoref{lem:all_cases_for_nontriv_via_vals_propn} for a list of situations in which this result may be applied; for seven of the nine CM types, one may find \(E\) and \(p\) as desired, and for six of these it is reasonable to expect that this occurs for infinitely many primes \(p\). The field extensions $L/K$ and zero-cycles $z$ referred to in \autoref{thm:nontriviality_proportion_summary} are explicitly characterized, and the algorithm for determining non-triviality of $\Delta(z)$ for the type of zero-cycle constructed is implemented in an accompanying Sage notebook\footnote{Available at \href{https://github.com/mwills758/locally_non-trivial_cycles/}{https://github.com/mwills758/locally\_non-trivial\_cycles/}}.

Previously, the only result of this form was found in \cite{Gazaki_Koutsianas_2024}. There, it is assumed that $E$ is already defined over $\mathbb{Q}$, and a conditional procedure is established for generating $z\in F^2(X_{L_0})$ with $\Delta(z) \neq 0$ for one extension $L_0/K$; this $L_0$ is the intersection of all $L$ appearing in our \autoref{thm:nontriviality_proportion_summary} for a given $E$ and $p$. The authors of this paper applied this result to several thousand elliptic curves with potential complex multiplication by the full ring of integers of \(\mathbb{Q}(\sqrt{-7})\) and with \(p=7\), computing that approximately 86.68\% of the curves of rank 1 admitted such a zero-cycle. Our \autoref{thm:nontriviality_proportion_summary} generalizes this procedure and gives an explanation for the proportion recorded in \cite{Gazaki_Koutsianas_2024}, demonstrating a precisely $\frac{p-1}{p}$ chance of $\Delta(z)$ non-triviality for any prime $p$ and CM type.

Fix $X = E\times_K E$ as before. For many choices of $L/K$ as in \autoref{thm:nontriviality_proportion_summary}, one can show that the map
\[
    \limi_n F^2_{\mathbb{A}}(X_L)/p^n \overset{\varepsilon}{\rightarrow} \Hom(\Br(X_L), \mathbb{Q}/\mathbb{Z})
\]
appearing in the complex (\ref{eqn:p_primary_F2_complex}) has trivial image, so proving exactness of (\ref{eqn:p_primary_F2_complex}) is equivalent to showing surjectivity of $\Delta$; see \cite[Theorem 4.2, Claim 3]{Gazaki_Koutsianas_2024}. The structure of the middle term for these $L$ is determined in \autoref{cor:size_of_middle_term} to be an $m$-dimensional $\mathbb{F}_p$-vector space for some $m$ depending on the splitting behavior of $p$ in $L/\mathbb{Q}$. While \autoref{thm:nontriviality_proportion_summary} allows us to construct extensions $L/K$ in which $\Im(\Delta)$ has positive $\mathbb{F}_p$-dimension, in general the value $m$ grows much too quickly for the zero-cycles we study to suffice. However, in some cases we are able to obtain surjectivity, and therefore exactness of (\ref{eqn:p_primary_F2_complex}):

\begin{theorem}[cf. \autoref{thm:main_theorem_detailed}]\label{thm:main_theorem_summary}
Let \(K\) be an imaginary quadratic field of class number 1, let \(p\) be a prime which splits in \(K/\mathbb{Q}\), and let \(E\) be an elliptic curve over \(K\) with complex multiplication by $\mathcal{O}_K$. Assume that $p \mid |\bar{E}(\mathbb{F}_p)|$, where $\bar{E}$ denotes the reduction of $E$ at some place of $K$ above $p$. Let $X = E\times_K E$.

Suppose that over the $L_0$ defined in the preceeding paragraph we may already construct a zero-cycle $z$ of the type produced by \autoref{thm:nontriviality_proportion_summary} such that $\Delta(z) \neq 0$. Then there exist infinitely many extensions $L/K$ for which $\Delta$ is surjective; that is, for which
\[
    \limi_n F^2(X_L)/p^n \overset{\Delta}{\rightarrow} \limi_n F^2_{\mathbb{A}}(X_L)/p^n \overset{\varepsilon}{\rightarrow} \Hom(\Br(X_L), \mathbb{Q}/\mathbb{Z})
\]
is exact.
\end{theorem}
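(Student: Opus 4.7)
The plan is to upgrade the single non-trivial cycle provided by hypothesis into a family of zero-cycles whose images span the entire middle term, whose precise structure is pinned down by \autoref{cor:size_of_middle_term}. First, I would decompose $\limi_n F^2_{\mathbb{A}}(X_L)/p^n \cong \bigoplus_{v \mid p} M_v$ as an $\mathbb{F}_p$-vector space, with the dimension of each local summand $M_v$ depending only on local data of $X_L$ at $v$. For extensions $L$ of the type produced by \autoref{thm:nontriviality_proportion_summary}, these local dimensions are uniform, so the total dimension $m$ is essentially determined by the number of places of $L$ above $p$. Surjectivity of $\Delta$ then becomes equivalent to exhibiting zero-cycles in $F^2(X_L)$ whose images generate each $M_v$.

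Second, I would choose $L/L_0$ to be Galois over $K$ with a highly symmetric decomposition behaviour at $p$: specifically, arranging that $\mathrm{Gal}(L/L_0)$ acts transitively on the places of $L$ lying over each place of $L_0$ above $p$. Under the standing hypothesis, the pullback to $L$ of the constructed cycle $z$ retains $\Delta(z) \neq 0$ at some place $v_0 \mid p$, and the Galois translates $\sigma \cdot z$ for $\sigma \in \mathrm{Gal}(L/L_0)$ yield cycles in $F^2(X_L)$ whose local images at $\sigma v_0$ are Galois-conjugate copies of the image of $z$ at $v_0$. Forming appropriate combinations of these translates --- or sums over decomposition cosets --- produces, for each $v \mid p$, a global cycle whose image is non-trivial precisely at $v$. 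If some $M_v$ has $\mathbb{F}_p$-dimension greater than one, I would re-execute the explicit construction of \autoref{thm:nontriviality_proportion_summary} with alternative choices of auxiliary torsion point or base divisor to obtain further independent local classes at $v_0$, and propagate them across all places by Galois translation as before.

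Third, the infinite family of such $L$ follows from observing that the constraints imposed --- containing $L_0$, being Galois over $K$ with the prescribed decomposition at $p$, and preserving the non-triviality of the pulled-back cycle --- restrict only the local behaviour of $L$ at finitely many places, so composing with auxiliary Galois extensions ramified only outside $p$ produces infinitely many valid $L$. The principal obstacle, as flagged in the excerpt, is dimension matching: the dimension $m$ of the middle term grows with $[L:K]$, while the explicit zero-cycles of \autoref{thm:nontriviality_proportion_summary} form an a priori low-dimensional family, so a naive count cannot succeed. The crucial verification will be that the Galois conjugates and auxiliary variants produce genuinely linearly independent images in $\bigoplus_v M_v$ --- not collapsing under unexpected global relations --- and this will hinge on an explicit local computation of their $F^2$-classes using the decomposition $\bar{E}[p^\infty] = \bar{E}[\mathfrak{p}^\infty] \oplus \bar{E}[\bar{\mathfrak{p}}^\infty]$ afforded by ordinary reduction at places above $p$.
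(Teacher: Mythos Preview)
Your Galois-translation strategy does not work, and the gap is fundamental. The cycle $z$ supplied by hypothesis lives in $F^2(X_{L_0})$; its pullback $\res_{L/L_0}(z) \in F^2(X_L)$ is therefore fixed by every $\sigma \in \Gal(L/L_0)$, so $\sigma\cdot z = z$ and you obtain no new classes. Concretely, if $v$ splits completely in $L/L_0$ then for each place $w$ of $L$ above the unique place $\varpi$ of $L_0$ above $v$ one has $L_w = (L_0)_\varpi$, and the $w$-component of $\Delta(\res_{L/L_0}(z))$ is simply $z_\varpi$ for every $w$. Thus $\Delta(z)$ lands on the diagonal of $\bigoplus_{w\mid v} \mathbb{Z}/p$ and spans only a line. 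Your fallback of varying the auxiliary torsion point $A$ also fails: replacing $A$ by $mA$ just rescales the symbol by $m$.

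What the paper actually does is construct a second \emph{point} $Q$ that is not defined over $K$. One takes a na\"ive quadratic point $Q = (b,\sqrt{f(b)})$ with $b\in\mathcal{O}_K$ chosen so that $v$ splits in $F = K(\sqrt{f(b)})$ and so that $\{A,Q\}$ is locally non-trivial (Corollary~\ref{cor:naive_quadratics_nontriviality}). Setting $L = F(E[\pi])$, there are exactly two places $w_1,w_2$ of $L$ above $v$, and the middle term is $(\mathbb{Z}/p)^2$. The key feature is that the non-trivial element $\sigma\in\Gal(L/L_0)$ sends $Q$ to $-Q$, so the two localizations $Q_{w_1}$ and $Q_{w_2}$ differ by sign. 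A short valuation argument (Proposition~\ref{prop:linear_independence_of_symbols}) then shows $\Delta(\{A,P\}_{L/L})$ and $\Delta(\{A,Q\}_{L/L})$ are $\mathbb{Z}/p$-linearly independent: if $n\{A,P\} + \{A,Q\}$ were trivial at both places, then both $n\widehat{P}+\widehat{Q}$ and $n\widehat{P}-\widehat{Q}$ would lie in $\mathcal{D}^p$, forcing $2\widehat{Q}\in\mathcal{D}^p$, a contradiction.

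Finally, the infinitely many $L$ do not arise by composing with extensions unramified at $p$ (this would alter the size of the middle term uncontrollably). Instead one varies $b$: the local conditions on $b$ cut out an open set in $\mathcal{O}_K$, and Siegel's theorem guarantees that only finitely many $b$ have $f(b)\in K^2$, so infinitely many choices give genuinely quadratic $F/K$. The paper only treats $m=2$; your stated concern about larger $m$ is precisely why the method does not extend further.
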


All of the $L$ constructed in \autoref{thm:main_theorem_summary} are given as degree 2 extensions of $L_0$, and the middle term of the exact sequence is an $\mathbb{F}_p$-vector space of dimension $m=2$.

Examples of such curves are not particularly rare; among curves defined just over \(\mathbb{Q}\), one expects that about half have positive rank, and heuristically curves of positive rank meet the additional criterion of \autoref{thm:main_theorem_summary} with proportion \(\frac{p-1}{p}\). See \autoref{exm:CM_by_D_equals_43} for an application of both of these Theorems to a specific curve $E/\mathbb{Q}$ with complex multiplication by the ring of integers of $K = \mathbb{Q}(\sqrt{-43})$.

The structure of this paper is as follows. In \autoref{sec:reduction_to_K2}, we review a reduction of Conjecture (E) available for products of curves with a \(K\)-rational point. In \autoref{sec:local_structure}, we describe the structure of the adelic Albanese kernel \(\limi_n F^2_{\mathbb{A}}(E\times E)/p^n\). In \autoref{sec:non-trivial_symbols}, we apply these structural results to give explicit conditions for local non-triviality of global symbols. Finally, in \autoref{sec:linear_independence} we show how these results are used to give infinite families of full local-to-global principles for zero-cycles lying in the adelic Albanese kernel.

\subsection{Notation}
Throughout, we will use the following notation:
\begin{itemize}
    \item For \(A\) an abelian group and \(n \in \mathbb{N}\), we will let \(A/n\) denote \(A/nA\), and \(\widehat{A} = \limi_n A/n\). For \(p\) prime, we let \(A\{p\}\) denote the \(p\)-primary part of \(A\).
    \item For a number field \(F\), let \(\mathcal{O}_F\) denote its ring of integers, and let \(\Omega_f\) and \(\Omega_\infty\) denote the finite and infinite places of \(F\) respectively. For \(v \in \Omega_f \cup \Omega_\infty\), let \(\iota_v:F \hookrightarrow F_v\) the localization. For a variety \(X\) defined over \(F\), we denote by \(X_v\) the base change \(X_{F_v}\), and for \(P \in X(F)\) we let \(P_v\) the corresponding point in \(X_v(F_v)\).
    \item For a local field \(k\), let \(\mathcal{O}_k\) denote its ring of integers, \(\mathfrak{m}_k\) its maximal ideal, and \(\mathbb{F}_k = \mathcal{O}_k / \mathfrak{m}_k\) the residue field. If \(k = F_v\) for \(F\) a number field and \(v\in \Omega_f\), we will denote these objects by \(\mathcal{O}_v\), \(\mathfrak{m}_v\), and \(\mathbb{F}_v\) respectively.
    \item All tensor products are over \(\mathbb{Z}\) unless otherwise noted.    
\end{itemize}

\subsection{Acknowledgements}
I would like to thank Professors Jean-Louis Colliot-Th\'el\`ene and Toshiro Hiranouchi for constructive and detailed feedback throughout the paper; Professors David Harari, Rachel Newton, Alexei Skorobogatov, and Olivier Wittenberg for their time and helpful conversations; and Professor Valia Gazaki, my thesis advisor, without whose patient guidance this paper could not have happened.

\newpage

\section{Reduction to Somekawa \texorpdfstring{$K$}{K}-groups}\label{sec:reduction_to_K2}

In this section, we discuss how to use results in \cite{Raskind_Spiess_2000} to reduce questions about zero-cycles to questions about Somekawa \(K\)-groups, which are more amenable to computation.

\subsection{Mackey functors and Somekawa \texorpdfstring{\(K\)}{K}-groups}

Let \(K\) be a perfect field.

\begin{definition}\label{def:Mackey_functor}
A \textit{Mackey functor} \(F\) over \(K\) is a contravariant functor \(F\) from the category of \'etale \(K\)-schemes \(\Sch_{\et/K}\) to abelian groups, together with a covariant association taking finite morphisms \(f:A\to B\) in \(\Sch_{\et/K}\) to abelian group homomorphisms \(f_*: F(A) \to F(B)\) such that
\begin{enumerate}
\item \(F(A\sqcup B) = F(A) \oplus F(B)\), and
\item given a pullback diagram of \'etale schemes
\[\begin{tikzcd}
        A & {B_1} \\
        {B_2} & C
        \arrow["{f_1}", from=1-1, to=1-2]
        \arrow["{f_2}"', from=1-1, to=2-1]
        \arrow["{g_1}", from=1-2, to=2-2]
        \arrow["{g_2}"', from=2-1, to=2-2]
\end{tikzcd}\]
we have that
\[\begin{tikzcd}
        {F(A)} & {F(B_1)} \\
        {F(B_2)} & {F(C)}
        \arrow["{f_2^*}"', from=1-1, to=2-1]
        \arrow["{g_1^*}", from=1-2, to=2-2]
        \arrow["{(f_1)_*}", from=1-1, to=1-2]
        \arrow["{(g_2)_*}"', from=2-1, to=2-2]
\end{tikzcd}\]
commutes, where \(f^*\) denotes \(F(f)\).
\end{enumerate}
\end{definition}

If \(X = \Spec L\) for some \(L/K\) finite, we write \(F(L) = F(\Spec L)\), and if \(f:\Spec M \to \Spec L\) is induced by a finite field extension \(M/L\) we denote by \(\res_{M/L}\) and \(\cor_{M/L}\) the maps \(f^*\) and \(f_*\) respectively. Note that since \'etale schemes over \(K\) are all of the form \(\bigsqcup_{i=1}^n \Spec L_i\) for \(L_1,\ldots, L_n\) finite (and thus separable) extensions of \(K\), the first condition above implies that a Mackey functor \(F\) is completely determined by its values \(F(L)\) for all \(L/K\) finite. The two main examples of Mackey functors over \(K\) we will use are the following:
\begin{itemize}
    \item For a semi-abelian variety \(A/K\) (i.e.\ an extension of an abelian variety by an algebraic torus; note that this class contains both abelian varieties and \(\mathbb{G}_m\)), the association \(L/K\mapsto A(L)\) defines a Mackey functor which we also denote by \(A\), where the restriction and corestriction maps are given by the usual inclusions and norm maps on semi-abelian varieties.
    \item For a \(\Gal(\bar{K}/K)\)-module \(B\) and \(i\geq 0\), the association \(L/K \mapsto H^i(L, B)\) defines a Mackey functor denoted \(H^i(-,B)\), where the restriction and corestriction maps are those from group cohomology.
\end{itemize}

A \textit{morphism of Mackey functors} is a natural transformation which preserves the covariant structure. Mackey functors over \(K\) then form a category, and this category is abelian with kernels, cokernels, and products given coordinate-wise. If we have an injective morphism of Mackey functors \(F'\hookrightarrow F\), we say that \(F'\) is a \textit{sub-Mackey functor} of \(F\), and this is equivalent to having \(F'(L) \hookrightarrow F(L)\) for all \(L/K\) finite. Every Mackey functor admits a multiplication-by-\(n\) endomorphism for all \(n \in \mathbb{Z}\). We define the \textit{Mackey product} of Mackey functors \(F_1, \hdots, F_n\) to be the Mackey functor \(F_1\mprod \cdots \mprod F_n\) which has, for \(L/K\) finite,
\begin{align*}
    (F_1\mprod \cdots \mprod F_n)(L) = \left(\bigoplus_{M/L\ \text{finite}} F_1(M) \otimes \cdots \otimes F_n(M)\right)/R,
\end{align*}
where \(R\) is generated by the \textit{projection formula relations}
\begin{align*}
    \paren{x_1\otimes \cdots\otimes \cor_{M'/M}(y_i) \otimes \cdots \otimes x_n} - \paren{\res_{M'/M}(x_1)\otimes \cdots\otimes y_i \otimes \cdots \otimes \res_{M'/M}(x_n)}
\end{align*}
for all towers of finite extensions \(M'/M/L\), all \(i = 1,\ldots, m\), all \(y_i\in F_i(M')\), and all \({x_j\in F_j(M)}\) for \(j\neq i\). Elements of \((F_1\mprod \cdots \mprod F_n)(L)\) for some \(L/K\) are called \textit{symbols}, and the symbol represented by \(x_1\otimes \cdots \otimes x_n \in F_1(M) \otimes \cdots \otimes F_n(M)\) is denoted by \(\{x_1,\hdots, x_n\}_{M/L}\). The Mackey product forms a tensor product on the category of Mackey functors over \(K\) \cite[Appendix A]{Kahn_Yamazaki_2013}.

Now, let \(A_1, \hdots, A_n\) be semi-abelian varieties over a perfect field \(k\).

\begin{definition}
    \cite{Somekawa_1990} The \textit{Somekawa K-group} of \(A_1, \hdots, A_n\) is defined by
    \[
        K(k; A_1, \hdots, A_n) = (A_1\mprod \cdots \mprod A_n)(k)/R',
    \]
    where \(R'\) is generated by the \textit{Weil reciprocity relations} defined in \cite[Definition 1.2]{Somekawa_1990} (see \cite[Definition 2.1.1]{Raskind_Spiess_2000} for a correction to this definition). If \(A_i = A\) for each \(i=1,\hdots, n\), we write \(K_n(k;A) = K(k; A, \hdots, A)\).
\end{definition}

By \cite[Proposition 1.5]{Somekawa_1990}, for every integer \(m\) coprime to \(\chr k\) there exists a group homomorphism
\[
    s_m: K(k; A_1, \hdots, A_n)/m \to H^2(k, A_1[m] \otimes \cdots \otimes A_n[m])
\]
called the \textit{generalized Galois symbol}, which is uniquely characterized by, for all \(\ell/k\) finite, fitting into the commutative diagram 
\[\begin{tikzcd}
	{\bigotimes_{i=1}^n A_i(\ell)/m} && {K(k;A_1,\hdots, A_n)/m} \\
	{\bigotimes_{i=1}^n H^1(\ell,A_i[m])} && {H^m(k, A_1[m]\otimes \cdots \otimes A_n[m])}
	\arrow[from=1-1, to=1-3]
	\arrow["{\bigotimes \delta_i}"', from=1-1, to=2-1]
	\arrow["{s_m}", from=1-3, to=2-3]
	\arrow["{\cor_{\ell/k}\circ \smile}"', from=2-1, to=2-3]
\end{tikzcd}\]
where each \(\delta_i\) is the connecting homomorphism induced by the short exact sequence of \(\Gal(\bar{k}/\ell)\)-modules 
\[
0 \to A_i[m] \to A_i(\bar{k}) \overset{m}{\to} A_i(\bar{k}) \to 0
\]
and \(\smile\) denotes the usual cup product on group cohomology.

\subsection{The filtration on \texorpdfstring{\(\CH_0(X)\)}{CH\_0(X)}}\label{ssec:filtration_on_CH}

Let \(X\) be a smooth projective variety over any field \(k\). The Chow group of zero-cycles of \(X\) admits a filtration
\[
    \CH_0(X) \supseteq F^1(X) \supseteq F^2(X),
\]
defined as follows. The group \(F^1(X)\) (often denoted \(A_0(X)\) in the literature) is the kernel of the degree map \(\CH_0(X) \to \mathbb{Z}\) defined on classes of points by \([P] \mapsto [k(P):k]\). If \(X(k) \neq \emptyset\), we can fix a basepoint \(P_0 \in X(k)\) and generate \(F^1(X)\) by symbols of the form \([P] - [P_0]\). 

Now, let \(P_0 \in X(k)\) again be a basepoint, and let \(\Alb_X\) denote the corresponding Albanese variety of \(X\), which together with the Albanese map \(\alb_X:X \to \Alb_X\) is characterized by the property that any other map from \(X\) to an abelian variety \(A\) mapping \(P_0\) to the zero point of \(A\) factors through \(\alb_X\). The Albanese map induces a homomorphism \(\alb_X: F^1(X) \to \Alb_X(k)\), and we denote the kernel of this map by \(F^2(X)\).

Now suppose that \(X= C_1 \times C_2\) is a product of two smooth projective geometrically connected curves with \(C_i(k) \neq \emptyset\). In this case, the Albanese variety of \(X\) with basepoint \((P_0, Q_0)\) is simply the product of the Jacobians \(J_1\) and \(J_2\) correpsonding to the basepoints \(P_0\) and \(Q_0\) respectively. Since the curves involved have points, we can apply the tools developed in \cite{Raskind_Spiess_2000}, in particular Corollary 2.4.1, to get that
\begin{align*}
    \CH_0(X) \cong \mathbb{Z} \oplus \Big(K(k;J_1) \oplus K(k; J_2) \Big) \oplus K(k; J_1, J_2).
\end{align*}
The terms in this decomposition correspond to the filtration of \(\CH_0(X)\) given above. Note that \(K(k;J_i) = J_i(k)\) by definition. More importantly for us, for \(X\) of this form we have an explicit isomorphism \(K(k;J_1, J_2) \cong F^2(X)\) via
\begin{align*}
    \{P, Q\}_{\ell/k} \mapsto (\rho_{\ell/k})_* \big( [P, Q] - [P_0, Q] - [P, Q_0] + [P_0, Q_0] \big)
\end{align*}
where \(\rho_{\ell/k}:X_\ell \to X\) denotes the base change; see the discussion following \cite[Theorem 2.16]{Gazaki_Hiranouchi_2021} for details.

Now, suppose that \(X\) is defined over a number field \(K\).

\begin{definition}
    The \textit{adelic Chow group of zero-cycles} of \(X\) is defined to be the product
    \[
        \CH_{0,\mathbb{A}}(X) = \prod_{v \in \Omega_f} \CH_0(X_v) \times \prod_{v\in \Omega_{\infty}} \bar{\CH_0}(X_v),
    \]
    where for \(v\in \Omega_\infty\) we set \(\bar{\CH_0}(X_v) = \CH_0(X_v) / (\rho_v)_*(\CH_0(X_{\bar{K_v}}))\), where \(\rho_v:X_{\bar{K_v}} \to X_v\) is the base change.

    We may analogously obtain a filtration $\CH_{0,\mathbb{A}}(X) \supseteq F^1_{\mathbb{A}}(X) \supseteq F^2_{\mathbb{A}}(X)$ by setting
    \[
        F^i_{\mathbb{A}}(X) = \prod_{v \in \Omega_f} F^i(X_v) \times \prod_{v\in \Omega_{\infty}} \bar{F^i}(X_v)
    \]
    for $i=1,2$, where for $v\in \Omega_{\infty}$ we denote by $\bar{F^i}(X_v)$ the image of \(F^i(X_v)\) in \(\bar{\CH_0}(X_v)\).
\end{definition}
Note that \(\bar{\CH_0}(X_v)\) is zero for \(v\) complex, and by \cite[Th\'eor\`eme 1.3(c)]{Colliot-Thelene_1995} we see that the real places contribute only finitely many copies of \(\mathbb{Z}/2\) to the subgroups \(F^i(X_v)\) for \(i=1,2\).

Now let us focus on the case that \(X = E\times E\) is a self-product of elliptic curves with complex multiplication defined over \(K\). To analyze the last term of our complex, recall that the Brauer group of \(X\) admits a filtration \(\Br(X) \supseteq \Br_1(X) \supseteq \Br_0(X)\) coming from the Hochschild-Serre spectral sequence
\begin{align*}
    H^i(k, H^j(X_{\bar{k}}, \mathbb{G}_m)) \Rightarrow H^{i+j}(X, \mathbb{G}_m),
\end{align*}
where \(\Br_1(X)\) denotes the \textit{algebraic Brauer group} \(\ker(\Br(X) \to \Br(\bar{X}))\) and \(\Br_0(X)\) is given by \(\Im(\Br(k) \to \Br(X))\). As proved in \cite[Section 5.2]{Gazaki_Hiranouchi_2021} and \cite[Corollary 2.17]{Gazaki_Koutsianas_2024}, the map \(\varepsilon:\widehat{\CH_{0,\mathbb{A}}(X)} \to \Hom(\Br(X), \mathbb{Q}/\mathbb{Z})\) restricts to a map \(\widehat{F^2_{\mathbb{A}}(X)} \to \Hom(\Br(X)/\Br_1(X), \mathbb{Q}/\mathbb{Z})\). Working at a single prime \(p\), we obtain a further restriction
\begin{align*}
    \varepsilon: \limi_n F^2_{\mathbb{A}}(X)/p^n \to \Hom\left(\frac{\Br(X)\{p\}}{\Br_1(X)\{p\}}, \mathbb{Q}/\mathbb{Z}\right).
\end{align*}

When \(p\geq 3\), the potential 2-torsion in \(F^2_{\mathbb{A}}(X)\) arising from the real places of \(K\) is trivial modulo \(p\), so we summarize the above by rewriting the complex (\ref{eqn:p_primary_F2_complex}) as
\begin{align}\label{eqn:p_primary_K2_complex}
    \limi_n K_2(K;E)/p^n \overset{\Delta}{\rightarrow} \prod_{v \in \Omega_f}\limi_n K_2(K_v;E_v)/p^n \overset{\varepsilon}{\rightarrow} \Hom\left(\frac{\Br(X)\{p\}}{\Br_1(X)\{p\}}, \mathbb{Q}/\mathbb{Z}\right).
\end{align}

\begin{lemma}\label{lem:places_away_from_p_are_trivial}
Suppose that \(p\geq 5\), and let \(v\) a place of \(K\) not lying over \(p\). Then \[\limi_n F^2(X_v) = \limi_n K_2(K_v;E_v)/p^n = 0.\]
\end{lemma}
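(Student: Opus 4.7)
The plan is to invoke the isomorphism $F^2(X_v)\cong K_2(K_v;E_v)=K(K_v;E_v,E_v)$ recorded in Section 2.2, after which the two vanishing claims in the statement coincide. It then suffices to prove that $K_2(K_v;E_v)$ is uniquely $p$-divisible whenever $v\nmid p$; the vanishing of the projective limit $\limi_n K_2(K_v;E_v)/p^n$ is immediate from this.

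The main structural input is the Néron model filtration $E(K_v)\supseteq E_0(K_v)\supseteq E_1(K_v)$, valid along with its analogues over every finite extension $L/K_v$. Writing $\ell$ for the residue characteristic of $v$, the hypothesis $v\nmid p$ forces $\ell\ne p$, so $E_1(L)=\hat{E}(\mathfrak{m}_L)$ is a pro-$\ell$ group and is uniquely $p$-divisible. By the bilinearity of the Mackey product combined with the projection formula relations, any symbol $\{P,Q\}_{L/K_v}$ with $P\in E_1(L)$ or $Q\in E_1(L)$ is infinitely $p$-divisible in $K_2(K_v;E_v)$, and hence vanishes modulo $p^n$ for every $n$. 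This reduces the problem to the image of symbols coming from the quotient Mackey functor $E/E_1$, which is an extension of the finite component-group Mackey functor by the Mackey functor $\tilde{E}^{\mathrm{ns}}$ of smooth points on the special fibre. The component group has bounded finite order, and after passing to a finite extension over which $E$ acquires semistable reduction its contribution is absorbed by the projection formula, leaving the image of $K(\mathbb{F}_L;\tilde{E}^{\mathrm{ns}},\tilde{E}^{\mathrm{ns}})$ taken over residue-field extensions.

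The remaining step is to show that this Somekawa $K$-group over a finite field is uniquely $p$-divisible, which follows from the finiteness of $\tilde{E}^{\mathrm{ns}}(\mathbb{F}_L)$ together with the generalized Galois symbol into $H^2(\mathbb{F}_L,\tilde{E}^{\mathrm{ns}}[p^n]^{\otimes 2})$, whose target vanishes because finite fields have cohomological dimension $1$. The hard part will be making the specialization from $K_2(K_v;E_v)$ down to $K_2(\mathbb{F}_v;\tilde{E}^{\mathrm{ns}})$ rigorous: one must verify that the Néron filtration on $E(K_v)$ is genuinely compatible with the Mackey-product structure in a manner that allows the mod-$p^n$ reduction to commute with the passage to the residue field. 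This compatibility is precisely the kind of specialization argument developed in the Mackey-functor formalism of \cite{Raskind_Spiess_2000}, to which I would appeal to close out the argument.
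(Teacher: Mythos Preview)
Your route is substantially more elaborate than the paper's, and leaves real work undone. The paper's proof is two lines: for $v$ of good reduction it simply cites \cite[Corollary~3.5.1(b)]{Raskind_Spiess_2000} to get $p$-divisibility of $K_2(K_v;E_v)$; for $v$ of bad reduction it uses that complex multiplication forces potential \emph{good} (not merely semistable) reduction over an extension $k/K_v$ of degree dividing $6$, and since $p\ge 5$ is coprime to every such degree the identity $\rho_*\rho^*=[k:K_v]$ transports $p$-divisibility from $F^2(X_k)$ down to $F^2(X_v)$. You never invoke the CM hypothesis or the bound $p\ge 5$, yet these are precisely what make the bad-reduction case clean.

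There is also a genuine gap in your finite-field step. Concluding that $K(\mathbb{F}_L;\tilde E^{\mathrm{ns}},\tilde E^{\mathrm{ns}})/p^n=0$ from the vanishing of the target $H^2(\mathbb{F}_L,-)$ requires the Galois symbol to be \emph{injective}, which you have not justified; this is a theorem, not a formality. Likewise, saying the component-group contribution is ``absorbed by the projection formula'' after passing to a semistabilizing extension tacitly needs that extension to have degree prime to $p$---again the role of CM (bounding the degree by $6$) and $p\ge 5$. Your closing appeal to the specialization machinery of \cite{Raskind_Spiess_2000} is, in effect, already the entire content of the good-reduction case; citing their Corollary~3.5.1(b) directly and then running the restriction--corestriction argument for bad reduction, as the paper does, is both shorter and complete.
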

\begin{pf}
(Cf. \cite[Proof of Theorem 4.2, Claim 2]{Gazaki_Koutsianas_2024})    
If \(v\) is a place of good reduction for \(E\), then by \cite[Corollary 3.5.1(b)]{Raskind_Spiess_2000} we have that \(K_2(K_v;E_v)\) is \(p\)-divisible, and so the corresponding inverse limit vanishes. 

Now suppose that \(v\) is a place of bad reduction for \(E\). Since \(E\) has complex multiplication, we know that \(E_v\) attains good reduction after an extension \(k/K_v\) of degree dividing 6 \cite[Proof of Proposition 5.4]{Silverman_2009}. Thus, \(K_2(k;E_k)\) is \(p\)-divisible. Let \(\rho: X_k \to X_v\) denote the base change, and recall that the composition
\[
    F^2(X_v) \overset{\rho^*}{\to} F^2(X_k) \overset{\rho_*}{\to} F^2(X_v)
\] 
is given by multiplication by \([k:K_v]\). Since \(p\geq 5\) we see that \(p \nmid [k:K_v]\), so for \(z\in F^2(X_v)\) we have that \(z\) not \(p\)-divisible if \([k:K_v]z\) not \(p\)-divisible, which in turn implies that \(\rho^*(z)\) is not \(p\)-divisible. But \(F^2(X_k)/p = 0\), so it must be that \(F^2(X_v)/p = 0\) as well.
\end{pf}

We thus have the following reduction.

\begin{proposition}\label{prop:reduction_to_K2}
    Let \(E/K\) be an elliptic curve with complex multiplication, and let \(p\geq 5\) be a prime. Set \(X = E\times E\). Then the complex (\ref{eqn:p_primary_F2_complex}) is exact for this \(X/K\) and this \(p\) if and only if 
    \begin{align}\label{eqn:p_primary_K2_complex_narrow}
        \limi_n K_2(K;E) /p^n \overset{\Delta}{\rightarrow} \prod_{v\mid p} \limi_n K_2(K_v, E_v) /p^n \overset{\varepsilon}{\rightarrow} \Hom\left(\frac{\Br(X)\{p\}}{\Br_1(X)\{p\}}, \mathbb{Q}/\mathbb{Z}\right),
    \end{align}
    is exact.
\end{proposition}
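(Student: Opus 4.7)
The plan is to exhibit canonical isomorphisms of the three terms of (\ref{eqn:p_primary_F2_complex}) with those of (\ref{eqn:p_primary_K2_complex_narrow}) that are compatible with $\Delta$ and $\varepsilon$, so that exactness of one complex is equivalent to exactness of the other. I would treat each term in turn and then verify naturality against the boundary maps.

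For the left-hand term, invoke the Raskind--Spiess identification recalled in \autoref{ssec:filtration_on_CH}: with $X = E\times E$ pointed by a pair of identity sections, the Albanese variety is $E\times E$ itself and the explicit symbol map yields $F^2(X)\cong K(K;E,E) = K_2(K;E)$. Applying $\limi_n(\cdot)/p^n$ transports this to the left-hand term of (\ref{eqn:p_primary_K2_complex_narrow}). For the middle term, I would decompose $F^2_{\mathbb{A}}(X)$ as a product over all places of $K$. The archimedean contributions $\bar{F^2}(X_v)$ are 2-torsion by \cite[Th\'eor\`eme 1.3(c)]{Colliot-Thelene_1995} and thus vanish modulo $p\geq 5$; the finite places $v\nmid p$ contribute zero by \autoref{lem:places_away_from_p_are_trivial}; and at places $v\mid p$, applying the Raskind--Spiess isomorphism locally (using that $E_v$ still has a rational point) converts each surviving factor to $\limi_n K_2(K_v;E_v)/p^n$. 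For the right-hand term, I would use the refinement recalled just above the statement: on $\widehat{F^2_{\mathbb{A}}(X)}$ the map $\varepsilon$ factors through $\Hom(\Br(X)/\Br_1(X), \mathbb{Q}/\mathbb{Z})$, and restricting to the $p$-primary part (exactness of $\Hom(-,\mathbb{Q}/\mathbb{Z})$) yields the stated $\Hom(\Br(X)\{p\}/\Br_1(X)\{p\}, \mathbb{Q}/\mathbb{Z})$.

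The final check is that the three identifications intertwine $\Delta$ and $\varepsilon$ of (\ref{eqn:p_primary_F2_complex}) with those of (\ref{eqn:p_primary_K2_complex_narrow}). Compatibility of $\Delta$ reduces to functoriality of the Raskind--Spiess symbol isomorphism under the base-change maps $X\to X_v$, and compatibility of $\varepsilon$ is built into the Somekawa-theoretic formulation of the Brauer--Manin pairing used in \cite[Section 5.2]{Gazaki_Hiranouchi_2021} and \cite[Corollary 2.17]{Gazaki_Koutsianas_2024}. No step is substantively hard once \autoref{lem:places_away_from_p_are_trivial} is in hand; I expect the only real bookkeeping burden to lie in carefully tracking basepoints and signs through the symbol map so that the isomorphisms genuinely commute with the boundary maps.
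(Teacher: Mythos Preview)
Your proposal is correct and mirrors the paper's own argument almost exactly: the paper presents this proposition as a direct summary of the preceding discussion, combining the Raskind--Spiess isomorphism $F^2\cong K_2$ (globally and locally), the vanishing of archimedean contributions modulo $p\geq 5$, the refinement of $\varepsilon$ through $\Br(X)\{p\}/\Br_1(X)\{p\}$, and \autoref{lem:places_away_from_p_are_trivial} to discard places $v\nmid p$. Your explicit mention of checking that the identifications intertwine $\Delta$ and $\varepsilon$ is a welcome bit of care that the paper leaves implicit.
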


\section{The adelic Albanese kernel completed at \texorpdfstring{\(p\)}{p}}\label{sec:local_structure}

Throughout this section, let \(E\) be an elliptic curve defined over a field \(k\) (the nature of \(k\) will vary), assume that \(E\) has complex multiplication by the full ring of integers \(\mathcal{O}_K\) for some imaginary quadratic field \(K\) of class number 1, and let \(p\) be a rational prime which splits in \(K/\mathbb{Q}\). Recall that there is a finite list of such fields \(K\), which are given by \(K = \mathbb{Q}(\sqrt{-D})\) for some \(D\in \{1,2,3,7,11,19,43,67,163\}\).

Our goal is to analyze the structure of the middle term of the complex (\ref{eqn:p_primary_K2_complex_narrow}) for \(E\) defined over the global fields constructed at the beginning of \autoref{ssec:curves_considered}. We will follow work of \cite{Hiranouchi_Hirayama_2013}, \cite{Hiranouchi_2016}, \cite{Gazaki_Leal_2021}, and \cite{Gazaki_Koutsianas_2024} to obtain an explicit description. The main tool will be the generalized Galois symbol.

The action of \(\End_k(E)\) on the space of invariant differentials of \(E\) defined over \(k\) gives a canonical identification of \(\End_k(E)\) with a subring \(\mathcal{O}_K \subset K \subset k\) \cite[Section 1]{Rubin_1999}, and for \(\alpha \in \mathcal{O}_K\) we write \([\alpha]\) to denote the corresponding endomorphism of \(E\). Since \(p\) splits in \(K/\mathbb{Q}\) and \(K\) has class number 1, we can write \(p = \pi \bar{\pi}\) for some irreducible element \(\pi\in \mathcal{O}_K\). This gives a corresponding factorization \([p] = [\pi][\bar{\pi}]\) in \(\End_k(E)\).

\subsection{Local decomposition of points}

Throughout this section, let \(k/\mathbb{Q}_p\) finite, and choose a minimal Weierstrass model for \(E\). We choose our factorization \(p = \pi\bar{\pi}\) to be such that the valuation on \(k\) restricts to that induced by \(\pi\) on \(K\). Recall that for any \(\ell/k\) finite we have a short exact sequence of \(G = \Gal(\bar{k}/k)\)-modules
\begin{align}\label{eqn:ses_for_local_ECs}
    0 \to \widehat{E}(\mathfrak{m}_\ell) \rightarrow E(\ell) \overset{r}{\rightarrow} \bar{E}(\mathbb{F}_\ell)\rightarrow 0
\end{align}
where \(r\) is the reduction map \cite[Proposition VII.2.1]{Silverman_2009}. Since \(p\) splits in \(K/\mathbb{Q}\), \(E\) has good ordinary reduction, i.e.\ the reduced curve \(\bar{E}\) is an ordinary elliptic curve over \(\mathbb{F}_k\). It follows from \cite{Deuring_1941} (see also \cite[Theorem 13.4.12]{Lang_1987}) that restriction gives a surjection \(\mathcal{O}_K = \End_{k}(E) \to \End_{\mathbb{F}_k}(\bar{E})\). Thus, there exists \(\widetilde{\pi}\in \mathcal{O}_K\) such that \([\widetilde{\pi}]\) reduces to the Frobenius endomorphism. In fact, since by \cite[Proposition 3.14]{Rubin_1999} we have that \([\widetilde{\pi}]\) acts on the formal group \(\widehat{E}\) by 
\begin{align*}
    [\widetilde{\pi}](Z) = \widetilde{\pi}Z + O(Z^2)
\end{align*}
and also that \([\widetilde{\pi}](Z) \equiv Z^q \pmod{\pi}\) by \cite[Corollary 3.9]{Rubin_1999}, we see that \(\widetilde{\pi}\) must be an associate of \(\pi\) in \(\mathcal{O}_K\).

It will be useful to us to extract the ``formal'' piece of a point \(P\in E(k)\), as in the following split short exact sequences.

\begin{lemma}\label{lem:ses_for_p_(co)torsion}
For any \(n\geq 1\) and any \(\ell/k\), we have split short exact sequences of \(G\)-modules
\begin{align}\label{eqn:ses_for_local_ECs_p_torsion}
    0 \to \widehat{E}[p^n] \rightarrow E[p^n] \overset{r}{\rightarrow} \bar{E}[p^n] \rightarrow 0
\end{align}
and
\begin{align}\label{eqn:ses_for_local_ECs_p_cotorsion}
    0 \to \widehat{E}(\mathfrak{m}_\ell)/p^n \rightarrow E(\ell)/p^n \overset{r}{\rightarrow} \bar{E}(\mathbb{F}_\ell)/p^n \rightarrow 0.
\end{align}
The short exact sequence (\ref{eqn:ses_for_local_ECs}) also splits, and this induces the splitting of (\ref{eqn:ses_for_local_ECs_p_cotorsion}).
\end{lemma}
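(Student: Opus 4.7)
My plan is to handle the three sequences in turn, exploiting the complementary behavior of $[\pi]$ and $[\bar\pi]$ on $\widehat{E}$ versus $\bar{E}$. The structural facts I rely on are already in place earlier in the section: (i) $[\bar\pi]$ is a formal group automorphism of $\widehat{E}$ since its leading coefficient $\bar\pi$ lies in $\mathcal{O}_k^\times$ (as $v_k(\bar\pi) = 0$), so in particular $\widehat{E}[\bar\pi^n] = 0$ for every $n \geq 1$; and (ii) with $q = |\mathbb{F}_k|$ and $m = [\mathbb{F}_\ell : \mathbb{F}_k]$, the endomorphism $[\widetilde\pi^m]$ reduces to the $m$-th power of the $q$-Frobenius and so acts as the identity on $\bar{E}(\mathbb{F}_\ell)$.

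For the torsion sequence (\ref{eqn:ses_for_local_ECs_p_torsion}), I would appeal to the Chinese Remainder decomposition $\mathcal{O}_K / p^n \cong \mathcal{O}_K/\pi^n \times \mathcal{O}_K/\bar\pi^n$. Since $G$ acts on $E[p^n]$ by $\mathcal{O}_K$-linear maps, this decomposition yields a $G$-equivariant splitting
\[
    E[p^n] \;=\; E[\pi^n] \oplus E[\bar\pi^n].
\]
By (i) and the commutativity of $\mathcal{O}_K$, $[\bar\pi^n]$ acts as an automorphism on $\widehat{E}[\pi^n]$, which forces $\widehat{E}[p^n] = \widehat{E}[\pi^n]$; comparing orders (both $\widehat{E}[\pi^n]$, via height one, and $E[\pi^n]$, via $\deg[\pi^n] = p^n$) gives $\widehat{E}[p^n] = E[\pi^n]$. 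The reduction $r$ then restricts to an isomorphism $E[\bar\pi^n] \xrightarrow{\sim} \bar{E}[p^n]$, identifying (\ref{eqn:ses_for_local_ECs_p_torsion}) with the above direct sum and establishing both exactness and splitting.

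For the integral sequence (\ref{eqn:ses_for_local_ECs}), my plan is to exhibit a single $\mathcal{O}_K$-algebra element that separates the two pieces. Writing $\widetilde\pi = u\pi$ with $u \in \mathcal{O}_K^\times$, fact (ii) reads $[\pi^m] = [u^{-m}]$ on $\bar{E}(\mathbb{F}_\ell)$, so the endomorphism
\[
    f \;:=\; [\pi^m] - [u^{-m}] \;\in\; \mathcal{O}_K
\]
annihilates $\bar{E}(\mathbb{F}_\ell)$, while on $\widehat{E}(\mathfrak{m}_\ell)$ it is an automorphism: its leading coefficient $\pi^m - u^{-m}$ is a unit in $\mathcal{O}_k$ because $\pi^m \in \mathfrak{m}_k$ and $u^{-m} \in \mathcal{O}_k^\times$. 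Setting $S := \ker\!\bigl(f|_{E(\ell)}\bigr)$, the vanishing of $f$ on $\bar{E}(\mathbb{F}_\ell)$ allows me to lift each $\bar P$ to an element of $S$ (take any lift $\tilde P$, solve $f(h) = f(\tilde P) \in \widehat{E}(\mathfrak{m}_\ell)$ for $h \in \widehat{E}(\mathfrak{m}_\ell)$ using invertibility of $f$ there, and replace $\tilde P$ by $\tilde P - h$), while invertibility of $f$ on $\widehat{E}(\mathfrak{m}_\ell)$ gives $S \cap \widehat{E}(\mathfrak{m}_\ell) = 0$. Hence $E(\ell) = S \oplus \widehat{E}(\mathfrak{m}_\ell)$, and the splitting is $G$-equivariant because $f$ commutes with the Galois action.

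Reducing this decomposition modulo $p^n$ simultaneously produces exactness of (\ref{eqn:ses_for_local_ECs_p_cotorsion}) and shows that its splitting descends from that of (\ref{eqn:ses_for_local_ECs}), as required by the final assertion. The main obstacle is the construction in the third paragraph: one must combine the uniformizer property of $\pi$ in $\mathcal{O}_k$ with the identification of $[\widetilde\pi]$ as Frobenius to produce a single element of $\mathcal{O}_K$ acting with opposite behavior on the two quotients of the sequence. Once $f$ is identified, each verification reduces to a direct computation with power series or with the Frobenius.
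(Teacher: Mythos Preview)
Your argument is correct. For the torsion sequence (\ref{eqn:ses_for_local_ECs_p_torsion}) you and the paper do essentially the same thing: use the CM decomposition $E[p^n]=E[\pi^n]\oplus E[\bar\pi^n]$ and identify $\widehat{E}[p^n]$ with $E[\pi^n]$.

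The genuine divergence is in splitting the integral sequence (\ref{eqn:ses_for_local_ECs}). The paper argues structurally: since $\bar{E}(\mathbb{F}_\ell)$ is finite, it suffices to find a torsion subgroup of $E(\ell)$ mapping isomorphically to it, and this is done by combining the standard isomorphism on prime-to-$p$ torsion (via unramifiedness) with the identification $E[\bar\pi^{n_0}]\xrightarrow{\sim}\bar{E}[p^{n_0}]$ already obtained from (\ref{eqn:ses_for_local_ECs_p_torsion}). Your route instead manufactures a single endomorphism $f=[\widetilde\pi^m-1]$ (up to a unit) that kills $\bar{E}(\mathbb{F}_\ell)$ while having unit linear term on $\widehat{E}$, so that $S=\ker f$ is the desired complement. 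This is cleaner and avoids the case split on $p$-part versus prime-to-$p$ part; it also bypasses the snake-lemma step entirely, since once (\ref{eqn:ses_for_local_ECs}) is split, exactness and splitting of (\ref{eqn:ses_for_local_ECs_p_cotorsion}) follow immediately by tensoring with $\mathbb{Z}/p^n$. On the other hand, the paper's argument yields the explicit description of the section as $E[\bar\pi^{n_0}]\oplus E(\ell)\{m\}$, and the identification of the $p$-part of the section with $E[\bar\pi^{n_0}]$ is exactly what is used downstream in Lemma~\ref{lem:P_hat_computation}. Your section $S=E[\widetilde\pi^m-1](\ell)$ turns out to coincide with this (both are finite of order $|\bar E(\mathbb{F}_\ell)|$ and intersect $\widehat{E}(\mathfrak{m}_\ell)$ trivially), but that is not visible from your construction without further work.
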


\begin{pf}~(cf. \cite[Section 3.2.1]{Gazaki_Koutsianas_2024})
Consider the commutative diagram with exact rows
\[\begin{tikzcd}
	0 & {\widehat{E}(\mathfrak{m}_\ell)} & {E(\ell)} & {\bar{E}(\mathbb{F}_\ell)} & 0 \\
	0 & {\widehat{E}(\mathfrak{m}_\ell)} & {E(\ell)} & {\bar{E}(\mathbb{F}_\ell)} & 0
	\arrow[from=1-1, to=1-2]
	\arrow[from=1-2, to=1-3]
	\arrow["{p^n}"', from=1-2, to=2-2]
	\arrow["r", from=1-3, to=1-4]
	\arrow["{p^n}"', from=1-3, to=2-3]
	\arrow[from=1-4, to=1-5]
	\arrow["{p^n}"', from=1-4, to=2-4]
	\arrow[from=2-1, to=2-2]
	\arrow[from=2-2, to=2-3]
	\arrow["r", from=2-3, to=2-4]
	\arrow[from=2-4, to=2-5]
\end{tikzcd}\]
Applying the snake lemma gives an exact sequence of \(G\)-modules
\begin{align*}
    0 \to \widehat{E}[p^n](\mathfrak{m}_\ell) \rightarrow E[p^n](\ell) \overset{r}{\rightarrow} \bar{E}[p^n](\mathbb{F}_\ell) \rightarrow \widehat{E}(\mathfrak{m}_\ell)/p^n \rightarrow E(\ell)/p^n  \overset{r}{\rightarrow} \bar{E}(\mathbb{F}_\ell)/p^n \rightarrow 0.
\end{align*}
\noindent
Using \cite[Proposition 5.4]{Rubin_1999}, we have a decomposition
\begin{align*}
    E[p^n] \cong \mathcal{O}_K/(p^n) \cong \mathcal{O}_K/(\pi^n) \oplus \mathcal{O}_K/(\bar{\pi}^n) \cong E[\pi^n] \oplus E[\bar{\pi}^n],
\end{align*}
and since the endomorphisms \([\pi]\) and \([\bar{\pi}]\) are defined over \(K\subset k\) this decomposition is as \(G\)-modules as well. Noting that \([\pi^n]\) is of degree \(p^n\) and recalling that there is an associate \(\widetilde{\pi}\) of \(\pi\) such that \([\widetilde{\pi}]\) restricts to the Frobenius endomorphism, we see that \(\widehat{E}[p^n]\) coincides with \(E[\pi^n]\) as \(G\)-submodules of \(E[p^n]\). Thus, the restriction map \(r\) vanishes on \(E[\pi^n]\). Since \(r\) is surjective for any \(\ell\) such that \(E[p^n] \subseteq E(\ell)\), we see that \(E[\bar{\pi}^n]\) is mapped isomorphically onto \(\bar{E}[p^n]\) as \(G\)-modules, and the sequence (\ref{eqn:ses_for_local_ECs_p_torsion}) splits canonically.

Now consider the short exact sequence (\ref{eqn:ses_for_local_ECs}). Since \(E\) has good reduction, by \cite[Proposition VII.3.1]{Silverman_2009} for \(m\) coprime to \(p\) the reduction map is injective on \(m\)-torsion points and by \cite[Proposition VII.4.1]{Silverman_2009} \(E[m]\) is unramified, so \(r\) induces an isomorphism from the coprime-to-\(p\) torsion subgroup of \(E(\ell)\) to that of \(\bar{E}(\ell)\). Then since \(\bar{E}(\ell)\) is finite, it suffices to show that \(r\) gives an isomorphism \(E(\ell)\{p\}\cong \bar{E}(\ell)\{p\}\). Since \(E\) has ordinary reduction, we may write \(\bar{E}(\ell)\{p\} = \bar{E}[p^{n_0}]\) for some \(n_0 \geq 0\), and applying the above for \(n_0\) we have that \(r\) induces a \(G\)-module isomorphism \(E[\bar{\pi}^{n_0}] \cong \bar{E}[p^{n_0}]\). Thus, \(r\) restricts to an isomorphism
\begin{align*}
    E[\bar{\pi}^{n_0}] \oplus E(\ell)\{m\} \cong \bar{E}(\ell),
\end{align*}
and so the sequence (\ref{eqn:ses_for_local_ECs}) splits as \(G\)-modules. As a consequence, the sequence (\ref{eqn:ses_for_local_ECs_p_cotorsion}) splits as well by tensoring with \(\mathbb{Z}/p^n\).\qedhere
\end{pf}

\begin{remark}\label{rmk:splitting_of_Mackey_cotorsion}
    The associations \(\ell\mapsto \widehat{E}(\mathfrak{m}_\ell)\) and \(\ell \mapsto \bar{E}(\mathbb{F}_\ell)\) together with the appropriate restriction and corestriction maps define Mackey functors \(\widehat{E}\) and \([E/\widehat{E}]\) respectively (see \cite[Section 3.3]{Raskind_Spiess_2000}). One can show that the splittings of (\ref{eqn:ses_for_local_ECs_p_cotorsion}) for various \(\ell\) are compatible with these restriction and corestriction maps, and so \autoref{lem:ses_for_p_(co)torsion} induces a split short exact sequence of Mackey functors
    \begin{align*}
        0 \to \widehat{E}/p^n \to E/p^n \to [E/\widehat{E}]/p^n \to 0
    \end{align*}
    for any \(n\). 
\end{remark}

Given a finite extension \(\ell/k\) and a point \(P\in E(\ell)\), we denote by \((\widehat{P}, \bar{P})\) its image under the isomorphism \(E(\ell) \cong \widehat{E}(\mathfrak{m}_\ell) \oplus \bar{E}(\mathbb{F}_\ell)\) induced by the splitting of (\ref{eqn:ses_for_local_ECs}); by an abuse of notation, we will write \(P\mapsto (\widehat{P}, \bar{P})\) under the splitting of (\ref{eqn:ses_for_local_ECs_p_cotorsion}) for \(n=1\) as well. 

Letting \(u\) denote the valuation on \(\ell\), recall that \(u\) induces a valuation on \(\widehat{E}(\mathfrak{m}_\ell)\subset E(\ell)\) via the identification
\begin{align*}
    z \mapsto \left(\frac{z}{w(z)} , \frac{-1}{w(z)} \right)
\end{align*}
where \(w(z) = z^3 + O(z^4)\) \cite[Proposition VII.2.2]{Silverman_2009}. We then see that \(P \in \widehat{E}(\mathfrak{m}_\ell)\) if and only if \(u(x(P)) < 0\) (or equivalently, \(u(y(P)) < 0\)), and that in this case \[u(P) = \frac{-u(x(P))}{2} = \frac{-u(y(P))}{3}.\]

\begin{remark}\label{rmk:point_valuations_same_after_coprime_to_p_scaling}
    This valuation corresponds to the filtration on \(\widehat{E}(\mathfrak{m}_\ell)\) given by \[\widehat{E}(\mathfrak{m}^i_\ell) = \{P\in \widehat{E}(\mathfrak{m}_\ell) \mid u(P) \geq i\}.\] All of the successive quotients of this filtration are \(\mathbb{F}_\ell\) by \cite[Proposition IV.3.2]{Silverman_2009}. Consequently, we see that for any \(P \in \widehat{E}(\mathfrak{m}_\ell)\), \(u(dP) = u(P)\) for any \(d\) coprime to \(p\).
\end{remark}

The following key Lemma will allow for us to make certain non-triviality criteria very explicit in \autoref{sec:non-trivial_symbols}.

\begin{lemma}\label{lem:P_hat_computation}
    Write \(|\bar{E}(\mathbb{F}_\ell)| = dp^n\) with \(p\nmid d\). Then \(E[\bar{\pi}^n] \subseteq E(\ell)\). If \(p\neq 2\), then for any \(P\in E(\ell) \setminus \widehat{E}(\mathfrak{m}_\ell)\) there exists a unique \(T \in E[\bar{\pi}^n]\) such that \(u(x(dP) - x(T)) \geq 1\) and \(u(y(dP) - y(T)) = 0\). 
    
    For this choice of \(T\), we have \(\widehat{dP} = dP + T\) with \(u(\widehat{dP}) = u(x(dP) - x(T))\).
\end{lemma}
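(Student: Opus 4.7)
The plan is to realize $T$ explicitly via the canonical splitting of (\ref{eqn:ses_for_local_ECs}) provided by \autoref{lem:ses_for_p_(co)torsion}, then verify the two conditions and the valuation formula using reduction mod \(\mathfrak{m}_\ell\) together with the Weierstrass addition law. For the inclusion \(E[\bar{\pi}^n] \subseteq E(\ell)\), I use that ordinary reduction forces \(\bar{E}(\mathbb{F}_\ell)\{p\}\) to be cyclic, so the equation \(|\bar{E}(\mathbb{F}_\ell)| = dp^n\) pins \(\bar{E}(\mathbb{F}_\ell)\{p\} = \bar{E}[p^n]\); the \(G\)-module isomorphism \(E[\bar{\pi}^n] \cong \bar{E}[p^n]\) established inside the proof of \autoref{lem:ses_for_p_(co)torsion}, after taking \(G_\ell\)-invariants, then shows that \(E[\bar{\pi}^n] \cap E(\ell)\) has order \(p^n = |E[\bar{\pi}^n]|\), forcing \(E[\bar{\pi}^n] \subseteq E(\ell)\). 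For the main claim, write \(P = \widehat{P} + \tau\) via the splitting and decompose \(\tau = \tau_p + \tau_m\) along the \(p\)-primary/prime-to-\(p\) decomposition of \(\bar{E}(\mathbb{F}_\ell)\), with \(\tau_p \in E[\bar{\pi}^n]\). Since \(d\tau_m = 0\), we get \(dP = d\widehat{P} + d\tau_p\), and setting \(T := -d\tau_p\) gives \(\widehat{dP} = d\widehat{P} = dP + T\) on the nose.

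To verify the conditions, reduce mod \(\mathfrak{m}_\ell\): since \(\widehat{dP}\) reduces to \(O\), we have \(\overline{dP} = -\bar{T}\), hence \(\bar{x}(dP) = \bar{x}(T)\), giving \(u(x(dP) - x(T)) \geq 1\). For the \(y\)-condition, \(\overline{y(dP) - y(T)} = \bar{y}(-T) - \bar{y}(T)\) vanishes iff \(\bar{T}\) is \(2\)-torsion in \(\bar{E}\); since \(\bar{T} \in \bar{E}[p^n]\) has odd order for \(p \neq 2\), this forces \(T = O\), which is the excluded degenerate case. For uniqueness, if \(T' \in E[\bar{\pi}^n]\) also satisfies the conditions, then \(\bar{x}(T') = \bar{x}(T)\) together with injectivity of reduction on \(E[\bar{\pi}^n]\) yields \(T' \in \{T, -T\}\); the sign is pinned down by the \(y\)-condition, since \(T' = -T\) would make \(\bar{y}(T')\) equal \(\bar{y}(dP)\), contradicting \(u(y(dP) - y(T')) = 0\).

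Finally, for the valuation formula, apply the Weierstrass addition law to \(\widehat{dP} = dP + T\): with \(v := u(x(dP) - x(T)) \geq 1\), the slope \(\lambda = (y(T) - y(dP))/(x(T) - x(dP))\) has valuation \(-v\) (its numerator being a unit by the \(y\)-condition), so in
\[
    x(\widehat{dP}) = \lambda^2 + a_1 \lambda - a_2 - x(dP) - x(T)
\]
the \(\lambda^2\) term dominates, yielding \(u(x(\widehat{dP})) = -2v\) and thus \(u(\widehat{dP}) = v\). The main obstacle I anticipate is bookkeeping \(G\)-equivariance throughout: ensuring that the decomposition \(\bar{E}(\mathbb{F}_\ell) = \bar{E}[p^n] \oplus \bar{E}(\mathbb{F}_\ell)\{m\}\) lifts compatibly with the splitting of (\ref{eqn:ses_for_local_ECs}), so that \(\tau_p\) genuinely lands in \(E[\bar{\pi}^n] \cap E(\ell)\) rather than merely in \(E[\bar{\pi}^n](\bar{k})\), and that the identification of \(T = -d\tau_p\) is canonical rather than choice-dependent.
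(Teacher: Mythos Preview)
Your proof is correct and rests on the same two ingredients as the paper's—the canonical splitting from \autoref{lem:ses_for_p_(co)torsion} and the Weierstrass addition law—but packages them more uniformly. The paper argues by a case split (\(P \in E[\bar{\pi}^n]\) versus not): in the first case it exhibits \(T=-dP\) directly and rules out competitors by plugging into the addition formula and invoking well-definedness of \(\widehat{dP}\); in the second it pulls \(T\) out of the splitting and then reads off the valuative conditions from the addition formula. You instead construct \(T=-d\tau_p\) once via the splitting and handle uniqueness by reducing modulo \(\mathfrak{m}_\ell\) and using that \(E[\bar{\pi}^n]\to\bar{E}[p^n]\) is injective, which is a bit cleaner and avoids the case split entirely. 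The valuation formula is done the same way in both. Your worry about \(G\)-equivariance is already handled: the splitting in \autoref{lem:ses_for_p_(co)torsion} is built as a \(G\)-module splitting, so \(\tau_p\) automatically lands in \(E[\bar{\pi}^n]\cap E(\ell)\). The degenerate case \(T=O\) (arising when \(\bar P\) has prime-to-\(p\) order, so \(dP\in\widehat{E}(\mathfrak{m}_\ell)\)) is treated only implicitly in both arguments, but this is a quirk of the lemma's statement rather than a defect of your proof.
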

\begin{proof}
    That \(E[\bar{\pi}^n] \subseteq E(\ell)\) follows from the \(G\)-module isomorphism of \(E[\bar{\pi}^n]\) with \(\bar{E}[p^n]\).

    Suppose that \(P \in E[\bar{\pi}^n]\). Note that \(u(y(T)) = 0\) for all \(T \in E[\bar{\pi}^n] \setminus \{O_E\}\), since otherwise the torsion point \(T\) of order \(p\neq 2\) would reduce to a 2-torsion point in \(\bar{E}(\mathbb{F}_\ell)\). Then \(T = -dP\) is as desired under the convention that \(u(0) = +\infty\), since \(x(dP) - x(-dP) = 0\) and \(p\neq 2\) implies that \(u(y(dP) - u(-dP)) = u(2y(dP)) = u(y(dP))\). Also, it is clear by definition that \(\widehat{dP} = O_E = dP + T\).  Now, suppose for contradiction that \(T\in E[\bar{\pi}^n]\setminus \{-dP, O_E\}\) satisfies the given valuative conditions. It cannot be that \(T = dP\), as then it would hold that \(u(y(dP) - y(T)) = +\infty\). Thus, the addition formula on \(E\) gives that
    \begin{align}\label{eqn:point_addition_formula}
        x(dP + T) = \frac{y(dP) - y(T)}{x(dP) - x(T)} + x(dP) + x(T).
    \end{align}
    Since \(dP, T \in E(\ell) \setminus \widehat{E}(\mathfrak{m}_\ell)\) have \(x\)-coordinates of positive valuation, our assumptions give that \(u(x(dP + T)) < 0\) and thus \(\widehat{dP} = dP + T\), which contradicts the well-definedness of \(\widehat{dP}\) implied by \autoref{lem:ses_for_p_(co)torsion}.
    
    Now, suppose that \(P \in E(\ell) \setminus \widehat{E}(\mathfrak{m}_\ell)\) is not \(\bar{\pi}\)-torsion. \autoref{lem:ses_for_p_(co)torsion} implies that there exists a unique \(T\in E[\bar{\pi}^n]\) such that \(dP + T \in \widehat{E}(\mathfrak{m}_\ell)\), so it suffices to check that the valuative conditions hold for this \(T\). Once again, we consider the point addition formula (\ref{eqn:point_addition_formula}). Since \(dP + T \in \widehat{E}(\mathfrak{m}_\ell)\) it must be that \(u(x(dP + T)) < 0\). Noting that we again have that the coordinates of \(dP\) and \(T\) are of non-negative valuation, it must be the case that \(u(x(dP) - x(T)) > 0\). Further, if it were the case that \(u(y(dP) - y(T))\) were positive, then since \(p \neq 2\) we compute that \(u(x(dP - T)) < u(x(dP + T))\), contradicting uniqueness of \(T\). Thus, it must be that \(u(y(dP) - y(T)) = 0\), and we conclude by noting that \(u(\widehat{dP}) = u(dP + T) = -2u(x(dP) - x(T))\), as desired.
\end{proof}

\subsection{A filtration on the quotient \texorpdfstring{\(\widehat{E}(\mathfrak{m})/p\)}{E\_hat(m)/p}}

Let \(k/\mathbb{Q}_p\) finite with valuation \(v\), and assume that \(E[p]\subseteq E(k)\). This implies that \(\widehat{E}[p] \cong \mu_p\) as \(\Gal(\bar{k}/k)\)-modules, since \(\mu_p\subseteq k\) (as in the previous section, \(p\) splitting in \(K/\mathbb{Q}\) implies that \(E\) has ordinary reduction, so \(\widehat{E}[p] \cong \mathbb{Z}/p\) as abelian groups). Also note that the absolute ramification index of \(k\) is divisible by \(p-1\) for the same reason.

The filtration \((\widehat{E}(\mathfrak{m}_k^i))_{i\geq 1}\) on \(\widehat{E}(\mathfrak{m}_k)\) discussed in the previous section induces a filtration \((\mathcal{D}^i_k)_{i\geq 1}\) on \(\widehat{E}(\mathfrak{m}_k)/p\) by letting \(\mathcal{D}^i_k\) denote the image of \(\widehat{E}(\mathfrak{m}^i_k)\) under the quotient map \(q\). Let \(\widetilde{v}:\widehat{E}(\mathfrak{m}_k)/p \to \mathbb{Z}_{\geq 1} \cup \{\infty\}\) be the corresponding valuation, i.e. the function
\begin{align*}
    \widetilde{v}(q(P)) = \sup \{i \mid q(P) \in \mathcal{D}^i_k\}.
\end{align*}

The following Lemma will justify our using \(v\) in the sequel to denote the valuation on both \(\widehat{E}(\mathfrak{m}_k)\) and \(\widehat{E}(\mathfrak{m}_k)/p\).

\begin{lemma}\label{lem:point_valuations_are_the_same_mod_p}
    For any \(P \in \widehat{E}(\mathfrak{m}_k)\) with \(v(P) \leq p-1\), we have \(\widetilde{v}(q(P)) = v(P)\).
\end{lemma}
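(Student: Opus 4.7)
The plan is to prove both $\widetilde v(q(P)) \geq v(P)$ and $\widetilde v(q(P)) \leq v(P)$. The forward inequality is immediate from the construction: $P \in \widehat E(\mathfrak m_k^{v(P)})$ implies $q(P) \in \mathcal D_k^{v(P)}$. For the reverse inequality I would argue by contradiction: suppose $q(P) \in \mathcal D_k^{v(P)+1}$, so that one can write $P = Q + [p]R$ in $\widehat E(\mathfrak m_k)$ for some $Q$ with $v(Q) \geq v(P)+1$ and some $R \in \widehat E(\mathfrak m_k)$. The goal is then to show $v([p]R) > v(P)$ as well; once this is established, the non-archimedean property of $v$ on the formal group (which holds because the formal group law has the shape $F(X,Y) = X + Y + O(XY)$) forces $v(P) = v(Q + [p]R) > v(P)$, a contradiction.

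The key estimate is the uniform lower bound $v([p]R) \geq p$ for every $R \in \widehat E(\mathfrak m_k)$. Since $E$ has ordinary reduction, $\widehat E$ is a formal group of height $1$ over $\mathcal O_k$, so $[p](Z)$ reduces mod $p$ to a series of the form $u Z^p + O(Z^{p+1})$ with $u \in \mathbb F_k^\times$. Writing $[p](Z) = \sum_{i\geq 1} c_i Z^i$ with $c_i \in \mathcal O_k$, this says that $v(c_1) = v(p) = e$, $v(c_i) \geq e$ for $2 \leq i \leq p-1$, and $v(c_p) = 0$. The Newton polygon of $[p]$ therefore has its unique break between degrees $1$ and $p$, and for any $R$ with $v(R) \geq 1$ the triangle inequality yields
\[
v([p]R) \;\geq\; \min\bigl(e + v(R),\; p\,v(R)\bigr).
\]

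Both quantities in this minimum are at least $p$: the second because $v(R) \geq 1$, and the first because the standing hypothesis $E[p] \subseteq E(k)$ forces $\mu_p \subseteq k$ and hence $e \geq p-1$. Together with the hypothesis $v(P) \leq p-1$ this gives $v([p]R) \geq p > v(P)$, closing the loop.

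I expect the one point requiring real care to be the height-$1$ control on the Newton polygon of $[p]$, i.e.\ the congruence $[p](Z) \equiv u Z^p + O(Z^{p+1}) \pmod p$. In the CM context of the paper this can be extracted from the factorization $[p] = [\pi]\circ[\bar\pi]$ together with $v(\bar\pi) = 0$ and the congruence $[\pi](Z) \equiv Z^q \pmod \pi$ of \cite{Rubin_1999} already used in the excerpt; alternatively it is a standard consequence of ordinary reduction. Everything else amounts to routine Newton-polygon bookkeeping.
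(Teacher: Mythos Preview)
Your argument is correct. The paper takes a different, shorter route: it observes that for each $i \leq p-1$ the natural surjection
\[
\mathbb{F}_k \;\cong\; \widehat{E}(\mathfrak{m}_k^i)/\widehat{E}(\mathfrak{m}_k^{i+1}) \;\longrightarrow\; \mathcal{D}_k^i/\mathcal{D}_k^{i+1}
\]
lands in a group which, by \cite[Lemma~2.1.4]{Kawachi_2002}, is again isomorphic to $\mathbb{F}_k$; a surjection between finite sets of equal size is a bijection, and the lemma follows. Your approach trades this external citation for the explicit estimate $v([p]R) \geq \min\bigl(e+v(R),\,p\,v(R)\bigr) \geq p$, extracted from the shape of $[p](Z)$ modulo $p$ together with the ramification bound $e \geq p-1$ forced by $\mu_p \subset k$. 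This makes the argument self-contained and shows transparently where the hypothesis $v(P) \leq p-1$ enters. One minor point of phrasing: when you say $[p](Z)$ reduces mod $p$ to $uZ^p + O(Z^{p+1})$ with $u \in \mathbb{F}_k^\times$, you are conflating two reductions --- that $c_1,\ldots,c_{p-1} \in p\mathcal{O}_k$ comes from reducing mod $p$, whereas $c_p \in \mathcal{O}_k^\times$ comes from height $1$ over the residue field $\mathbb{F}_k = \mathcal{O}_k/\mathfrak{m}_k$. Both are true and standard, and your lower bound in fact only needs the first (together with $c_i \in \mathcal{O}_k$ for $i \geq p$), so the slip is harmless.
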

\begin{proof}
    By construction, for any \(i\geq 1\) we have a surjective map
    \begin{align*}
        \mathbb{F}_k \cong \widehat{E}(\mathfrak{m}^i_k) / \widehat{E}(\mathfrak{m}^{i+1}_k) \to \mathcal{D}^i_k / \mathcal{D}^{i+1}_k.
    \end{align*}
    When \(i \leq p-1\), the codomain is also isomorphic to \(\mathbb{F}_k\) by \cite[Lemma 2.1.4]{Kawachi_2002}. Thus, the given map is an isomorphism for these \(i\), and the statement follows.
\end{proof}

Consider the connecting map
\begin{align*}
    \delta: \widehat{E}(\mathfrak{m}_k)/p \to H^1(k, \widehat{E}[p]) \cong H^1(k, \mu_p) \cong k^\times / p
\end{align*}
arising from the Kummer short exact sequence for the multiplication-by-\(p\) endomorphism on \(\widehat{E}\). By \cite[Section 2, p. 251]{Kawachi_2002}, this map can be defined by mapping a point \(P\) to \(\alpha \in k^\times\) such that \(k(Q) = k(\sqrt[p]{\alpha})\), where \([p]Q = P\). 

Letting \(U_k = 1 + \mathfrak{m}_k\), we note that we have a short exact sequence
\[\begin{tikzcd}
	0 & {U_k} & {\mathcal{O}_k^\times} & {\mathbb{F}_k^\times} & 0.
	\arrow[from=1-1, to=1-2]
	\arrow[from=1-2, to=1-3]
	\arrow[from=1-3, to=1-4]
	\arrow[from=1-4, to=1-5]
\end{tikzcd}\]
Tensoring with \(\mathbb{Z}/p\) then gives an isomorphism \(U_k/p \cong \mathcal{O}_k^\times / p\), as the \(p^{\text{th}}\)-power Frobenius map is an isomorphism on \(\mathbb{F}_k^\times\). Further, we recall that \(U_k\) comes equipped with a filtration \(U_k^i = 1 + \mathfrak{m}_k^i\), and that this induces a corresponding filtration \(\bar{U_k^i}\) on \(U_k/p\) by again taking images under the quotient map. The following Theorem shows that \(\delta\) respects the filtrations on its domain and codomain.

\begin{theorem}\label{thm:delta_filtered_isomorphism}
    \cite[Theorem 2.1.6]{Kawachi_2002}
    The map \(\delta\) is an isomorphism. Further, \(\delta(\mathcal{D}^i_k) \subseteq \bar{U}^i_k\) for all \(i\), and \(\delta\) induces isomorphisms on successive quotients
    \[
        \mathcal{D}^i_k/\mathcal{D}^{i+1}_k \cong \bar{U}^i_k/\bar{U}^{i+1}_k.
    \] 
\end{theorem}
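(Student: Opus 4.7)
The plan is to prove the theorem by combining the long exact sequence arising from the Kummer-type sequence on the formal group with an explicit dimension count, and then to address the filtration compatibility through direct calculation with the formal group law.

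To begin, I will use the short exact sequence
\[
    0 \to \widehat{E}[p] \to \widehat{E} \xrightarrow{[p]} \widehat{E} \to 0
\]
of \(\Gal(\bar{k}/k)\)-modules and its associated long exact sequence, which produces \(\delta\) as a connecting map and immediately gives that \(\delta:\widehat{E}(\mathfrak{m}_k)/p \hookrightarrow H^1(k,\widehat{E}[p])\) is injective. The hypothesis \(E[p]\subseteq E(k)\) together with the decomposition \(\widehat{E}[p] = E[\pi]\) from the preceding lemma ensures \(\widehat{E}[p] \cong \mu_p\) as Galois modules (both are one-dimensional trivial \(\mathbb{F}_p\)-representations of \(G_k\) since \(\mu_p \subseteq k\)), so Kummer theory identifies the target with \(k^\times/p\). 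To see the image lies in \(\bar{U}_k\), I would promote the Kummer sequence to flat cohomology over \(\mathcal{O}_k\): the formal group \(\widehat{E}\) extends to a smooth commutative formal \(\mathcal{O}_k\)-group scheme whose \(p\)-torsion is a finite flat group scheme isomorphic to \(\mu_{p,\mathcal{O}_k}\), so \(\delta\) factors through \(H^1_{fl}(\mathcal{O}_k,\mu_p) \cong \mathcal{O}_k^\times/p\), which the preceding discussion identifies with \(\bar{U}_k\).

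Once it is known that \(\delta\) is injective with image contained in \(\bar{U}_k\), the claim that \(\delta\) is an isomorphism onto \(\bar{U}_k\) reduces to an \(\mathbb{F}_p\)-dimension count. The formal group \(\widehat{E}(\mathfrak{m}_k)\) is a \(\mathbb{Z}_p\)-module of rank \([k:\mathbb{Q}_p]\) (via the formal logarithm) whose \(p\)-power torsion contains \(\widehat{E}[p] \cong \mathbb{Z}/p\), so \(\widehat{E}(\mathfrak{m}_k)/p\) has \(\mathbb{F}_p\)-dimension \([k:\mathbb{Q}_p]+1\). Standard local unit theory gives that \(U_k\) is also a \(\mathbb{Z}_p\)-module of rank \([k:\mathbb{Q}_p]\) whose \(p\)-power torsion is \(\mu_{p^\infty}(k)\), which contains \(\mu_p\); hence \(\bar{U}_k\) has the same dimension, and the injection must be surjective.

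For the filtration statement, I would compute \(\delta\) explicitly using the characterization \(\delta(P) = \alpha\) where \([p]Q = P\) and \(k(Q) = k(\sqrt[p]{\alpha})\). Given \(P\) corresponding to a parameter with valuation \(i\), analysis of the Newton polygon of the power series \([p](Z) - Z_P\) (using that in the ordinary reduction setting \([p](Z) = pZ + \cdots + (\text{unit})Z^p + \cdots\)) shows that a root \(Z_Q\) exists of valuation essentially \(i/p\). Tracking through the Lubin-Tate style intertwining power series identifying \(\widehat{E}[p]\) with \(\mu_p\) then yields an explicit \(\alpha \in U^i_k\). The main obstacle will be verifying that this valuation bound is sharp, namely that \(\delta(P) \notin \bar{U}^{i+1}_k\) whenever \(P \notin \widehat{E}(\mathfrak{m}^{i+1}_k)\); this amounts to showing the computed \(\alpha\) has exact \(U_k\)-filtration level \(i\). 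Once sharpness is established, the induced maps \(\mathcal{D}^i_k/\mathcal{D}^{i+1}_k \to \bar{U}^i_k/\bar{U}^{i+1}_k\) are injections between one-dimensional \(\mathbb{F}_k\)-vector spaces (the source by the cited Kawachi lemma, the target by classical local unit theory in the relevant range), and hence must be isomorphisms.
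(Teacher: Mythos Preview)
The paper does not prove this statement; it is quoted from Kawachi \cite[Theorem 2.1.6]{Kawachi_2002} without argument, so there is no in-paper proof to compare your proposal against.

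That said, your outline is a reasonable reconstruction of how such a result is established, and it is consistent with how the paper uses the theorem (note that the Corollary immediately following makes clear that the intended target of the isomorphism is $\bar{U}^1_k$, matching your reading). The injectivity and $\mathbb{F}_p$-dimension count are solid. Your flat-cohomology step rests on the identification $\widehat{E}[p] \cong \mu_{p,\mathcal{O}_k}$ as finite flat group schemes; this does hold here, since ordinary reduction makes $\widehat{E}[p]$ of multiplicative type and the hypothesis $E[p]\subseteq E(k)$ forces the \'etale quotient $\bar{E}[p]$ to be the constant scheme $\mathbb{Z}/p$, whose Cartier dual under the Weil pairing is $\mu_p$. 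You should state this justification explicitly rather than leave it implicit.

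The filtration compatibility is, as you acknowledge, where the genuine work lies. Your Newton-polygon sketch points in the right direction but is not yet a proof: the sharpness you flag as ``the main obstacle'' is precisely what Kawachi establishes by direct computation with the description of $\delta(P)$ as the Kummer class of the extension $k(Q)/k$ with $[p]Q = P$ (the paper records this description just before the theorem). One further caution on your closing sentence: the successive quotients $\mathcal{D}^i_k/\mathcal{D}^{i+1}_k$ and $\bar{U}^i_k/\bar{U}^{i+1}_k$ are one-dimensional over $\mathbb{F}_k$ only in a bounded range of $i$ (cf.\ the paper's use of \cite[Lemma 2.1.4]{Kawachi_2002} for $i\le p-1$); outside that range both sides eventually vanish and the claim is trivial, but the transitional indices require separate treatment, so the ``injection between one-dimensional spaces'' argument does not cover all $i$ as written.
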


Recall that we have a Mackey functor \(\mathbb{G}_m\) which associates to any \(\ell/k\) the unit group \(\ell^\times\). For each \(i\geq 0\), the subgroups \(U^i_k\) are compatible with norms and restrictions, and so descibe a sub-Mackey functor \(U^i\) of \(\mathbb{G}_m\). Passing to quotients, we obtain for each \(i\) a sub-Mackey functor \(\bar{U}^i \subseteq \mathbb{G}_m/p\).

\begin{corollary}\label{cor:delta_Mackey_isomorphism}
    If \(k\) is as above, \(\delta\) gives an isomorphism of Mackey functors \(\widehat{E}/p \cong \bar{U}^1\), and this isomorphism is again compatible with filtrations.
\end{corollary}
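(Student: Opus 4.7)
The plan is to upgrade the pointwise isomorphisms $\delta_\ell : \widehat{E}(\mathfrak{m}_\ell)/p \xrightarrow{\sim} \bar U^1_\ell$ provided by \autoref{thm:delta_filtered_isomorphism} to an isomorphism in the category of Mackey functors over $k$. Bijectivity on each object is already in hand, so what remains is to verify that the collection $(\delta_\ell)_{\ell/k}$ is natural with respect to restriction and corestriction, and that the filtration-level statement propagates through these maps.

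First I would recall the cohomological construction of $\delta_\ell$: it is the connecting homomorphism attached to the short exact sequence of $\Gal(\bar k/k)$-modules
\[
0 \to \widehat E[p] \to \widehat E(\bar k) \xrightarrow{[p]} \widehat E(\bar k) \to 0,
\]
composed with the canonical identification $\widehat E[p] \cong \mu_p$ and the Kummer isomorphism $H^1(\ell,\mu_p) \cong \ell^\times/p$. Naturality of $H^i(-,\widehat E[p])$ under inclusions of fields immediately yields compatibility with $\res_{\ell'/\ell}$. For corestriction, I would invoke the standard commutative diagram
\[
\begin{tikzcd}
\widehat{E}(\mathfrak{m}_{\ell'})/p \arrow[r, "\delta_{\ell'}"] \arrow[d, "\cor_{\ell'/\ell}"'] & H^1(\ell',\mu_p) \arrow[d, "\cor_{\ell'/\ell}"] & \ell'^\times/p \arrow[l, "\sim"'] \arrow[d, "N_{\ell'/\ell}"] \\
\widehat{E}(\mathfrak{m}_\ell)/p \arrow[r, "\delta_\ell"'] & H^1(\ell,\mu_p) & \ell^\times/p \arrow[l, "\sim"']
\end{tikzcd}
\]
whose left square commutes by functoriality of connecting homomorphisms (the Mackey corestriction on $\widehat E$, viewed as a smooth commutative formal group, is the sum over $\Gal(\ell'/\ell)$-orbits, which matches cohomological corestriction), and whose right square commutes because Kummer theory intertwines cohomological corestriction with the field norm on $\mathbb{G}_m$. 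This is precisely the Mackey corestriction on $\mathbb{G}_m$, and it restricts to that on $\bar U^1$ since each $U_\ell$ is norm-stable.

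Finally, the filtration compatibility transfers to the Mackey level because both $\widehat E(\mathfrak{m}_\ell^i)$ and $U^i_\ell$ are preserved under restriction and norm (for $U^i$ this is a classical property of higher unit groups in local fields), so they assemble into sub-Mackey functors. The inclusion $\delta(\mathcal D^i_\ell) \subseteq \bar U^i_\ell$ and the induced isomorphisms on successive quotients from \autoref{thm:delta_filtered_isomorphism} then hold as statements about sub-Mackey functors without further work. The only step that is not pure diagram chasing is the norm-corestriction compatibility above; however, this is exactly the naturality that underlies any Mackey enhancement of a Kummer-type connecting homomorphism, and I expect it to cause no genuine difficulty.
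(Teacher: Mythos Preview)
Your proposal is correct and matches the paper's approach. The paper states this result as an immediate corollary of \autoref{thm:delta_filtered_isomorphism} without proof; your argument simply makes explicit the standard functoriality of the Kummer connecting homomorphism with respect to restriction and corestriction that the paper leaves implicit.
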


\subsection{Somekawa \texorpdfstring{\(K\)}{K}-groups of elliptic curves over local fields mod \texorpdfstring{\(p\)}{p}}
Once again, let \(k/\mathbb{Q}_p\) finite.
Our goal here is to follow a proof determining the structure of the Somekawa \(K\)-group \(K_2(k;E)/p\), and in doing so determine a non-triviality criterion for symbols defined over \(k\). This proof is contained in its entirety in \cite[Section 3.2]{Gazaki_Koutsianas_2024}, and versions or essential ingredients of it can be found in \cite{Raskind_Spiess_2000}, \cite{Hiranouchi_2016}, \cite{Hiranouchi_Hirayama_2013}, \cite{Yamazaki_2005}, and \cite{Kawachi_2002}.

\begin{theorem}\label{thm:K2_mod_p_structure}
Let \(E/k\) an elliptic curve with complex multiplication by some \(\mathcal{O}_K\), let \(p\) a prime splitting in \(K/\mathbb{Q}\), and assume that \(E[p]\subseteq E(k)\). Then \(K_2(k;E)/p\cong \Br(k)[p]\) via
\begin{align*}
    \{P, Q\}_{k/k} \mapsto \cor_{\ell/k} \big( \delta(\widehat{P}), \delta(\widehat{Q})\big)_\zeta,
\end{align*}
where \(\delta : \widehat{E}(k)/p \to k^\times / p\) is the connecting map from the Kummer sequence for \(\widehat{E}(k)\), \(\zeta\) is a primitive \(p\)-th root of unity, and \((-,-)_\zeta\) denotes the Hilbert symbol.
\end{theorem}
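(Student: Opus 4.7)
My plan is to decompose \(K_2(k;E)/p\) using the Mackey-functor splitting of \(E/p\) into formal and étale parts, show that the three ``non-formal'' summands vanish, and then identify the remaining formal--formal summand with \(\Br(k)[p]\) via the Kummer isomorphism \(\delta\) from Corollary \ref{cor:delta_Mackey_isomorphism} together with Tate's theorem on \(K_2^M(k)/p\).

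First, by Remark \ref{rmk:splitting_of_Mackey_cotorsion} and the additivity of the Mackey product in each argument, \(K(k;E,E)/p\) splits as the direct sum of \(K(k;\widehat{E},\widehat{E})/p\), two mixed terms \(K(k;\widehat{E},[E/\widehat{E}])/p\) and \(K(k;[E/\widehat{E}],\widehat{E})/p\), and \(K(k;[E/\widehat{E}],[E/\widehat{E}])/p\), where the Somekawa construction has been tacitly extended to these Mackey functors using the same Weil reciprocity relations. The three non-formal summands all vanish: the generalized Galois symbol \(s_p\) maps each of them into \(H^2(k,\, M_1 \otimes M_2)\) where at least one of \(M_1, M_2\) is \(\bar{E}[p]\). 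Since \(\bar{E}[p] \cong E[\bar{\pi}]\) is unramified (the Galois action factors through \(\Gal(\mathbb{F}_k^{\mathrm{sep}}/\mathbb{F}_k)\), and in fact is trivial once \(E[p]\subseteq E(k)\)), the image of \(s_p\) factors through \(H^2\) of the residue field, which vanishes because \(\mathbb{F}_k\) has \(p\)-cohomological dimension at most one. Combined with the injectivity of \(s_p\) modulo \(p\) on these components (following the arguments of \cite{Raskind_Spiess_2000} and \cite{Hiranouchi_Hirayama_2013}), these summands die.

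For the remaining formal--formal summand, the Mackey-functor isomorphism \(\delta : \widehat{E}/p \cong \bar{U}^1 \subseteq \mathbb{G}_m/p\) induces an isomorphism \(K(k;\widehat{E},\widehat{E})/p \cong K(k;\bar{U}^1,\bar{U}^1)/p\) sending \(\{P,Q\}_{\ell/k}\) to \(\{\delta(\widehat{P}),\delta(\widehat{Q})\}_{\ell/k}\). The target maps into \(K(k;\mathbb{G}_m,\mathbb{G}_m)/p \cong K_2^M(k)/p\), and by Tate's theorem (the \(n=2\) case of Bloch-Kato, unconditional over local fields) this is in turn isomorphic to \(H^2(k,\mu_p^{\otimes 2})\) via the classical Galois symbol. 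Fixing a primitive \(p\)-th root \(\zeta\) gives identifications \(\mu_p^{\otimes 2}\cong \mu_p\) and \(H^2(k,\mu_p)\cong \Br(k)[p]\) that turn the Galois symbol into the Hilbert symbol \((-,-)_\zeta\), so \(\{P,Q\}_{\ell/k}\) maps to \(\cor_{\ell/k}\bigl(\delta(\widehat{P}),\delta(\widehat{Q})\bigr)_\zeta \in \Br(k)[p]\). Surjectivity of the composite onto \(\Br(k)[p]\cong \mathbb{Z}/p\) then follows because the Hilbert symbol pairing on \(k^\times/p\) is non-degenerate and \(\bar{U}^1\) is not isotropic under this pairing.

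The main obstacle is the vanishing of the mixed summands \(K(k;\widehat{E},[E/\widehat{E}])/p\): beyond the cohomological vanishing above, one must carefully invoke the Raskind-Spiess injectivity of \(s_p\) and handle the projection-formula relations that entangle formal and étale data across finite extensions \(\ell/k\). This is also the step most sensitive to the good ordinary reduction hypothesis implied by \(p\) splitting in \(K/\mathbb{Q}\), since it is precisely ordinary reduction that makes the formal/étale dichotomy behave well at the Mackey-functor level.
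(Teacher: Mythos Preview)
Your overall strategy---split \(E/p\) into formal and \'etale parts, kill the three non-formal contributions, and identify the formal--formal piece with \(\Br(k)[p]\) via \(\delta\) and the Hilbert symbol---is exactly the paper's. However, there are two genuine gaps in your execution.

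First, your vanishing argument for the mixed summands is incorrect. You claim that because \(\bar{E}[p]\) is unramified, the image of \(s_p\) on a mixed piece ``factors through \(H^2\) of the residue field.'' That reasoning works for the \'etale--\'etale term, where both Kummer images are unramified and the cup product lands in \(H^2_{\mathrm{ur}}\), but it does \emph{not} work when one factor is \(\widehat{E}[p]\cong \mu_p\): the target \(H^2(k,\widehat{E}[p]\otimes \bar{E}[p])\cong H^2(k,\mu_p)=\Br(k)[p]\) is nonzero, and the cup product of an unramified class with a ramified class need not vanish in general. The paper does not argue via the image of \(s_p\) at all here; it cites \cite[Proof of Theorem 3.14]{Gazaki_Leal_2021} and \cite[Lemma 3.4.2]{Raskind_Spiess_2000} for direct vanishing of the Mackey products \((\widehat{E}/p\mprod [E/\widehat{E}]/p)(k)\) and \(([E/\widehat{E}]/p)^{\mprod 2}(k)\) themselves, so no appeal to injectivity of \(s_p\) is needed. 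Relatedly, the paper decomposes the Mackey product \((E/p)^{\mprod 2}(k)\) rather than \(K_2(k;E)/p\), which avoids the awkwardness you flag of ``tacitly extending'' the Somekawa construction (with its Weil reciprocity relations) to \(\widehat{E}\) and \([E/\widehat{E}]\).

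Second, in the formal--formal step you only establish \emph{surjectivity} onto \(\Br(k)[p]\): you map \(K(k;\bar{U}^1,\bar{U}^1)/p\) into \(K_2^M(k)/p\) and observe the composite hits \(\Br(k)[p]\). But you never argue that \((\bar{U}^1\mprod\bar{U}^1)(k)\to K_2^M(k)/p\) is injective, nor that \(s_p\) is injective on the formal--formal summand, so an isomorphism does not follow. The paper closes this gap by invoking \cite[Lemma 3.3]{Hiranouchi_2016}, which gives directly that the Hilbert symbol induces an \emph{isomorphism} \((\bar{U}^0\mprod\bar{U}^0)(k)\cong\Br(k)[p]\); combined with \(\bar{U}^1\cong\bar{U}^0\) this yields the isomorphism in one stroke. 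Your surjectivity-via-non-isotropy remark is a correct half of this, but the other half is not free.
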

\begin{pf}
Recall that the generalized Galois symbol \(s_p:K_2(k;E)/p \to H^2(k, E[p]^{\otimes 2})\), for all \(\ell/k\) finite, fits into the commutative diagram
\[\begin{tikzcd}
	{(E(\ell)/p)^{\otimes 2}} && {K_2(k;E)/p} \\
	{H^1(\ell,E[p])^{\otimes 2}} && {H^2(k, E[p]^{\otimes 2})}
	\arrow[from=1-1, to=1-3]
	\arrow["{\delta^{\otimes 2}}"', from=1-1, to=2-1]
	\arrow["{s_p}", from=1-3, to=2-3]
	\arrow["{\cor_{\ell/k}\circ \smile}"', from=2-1, to=2-3]
\end{tikzcd}\]
Since the connecting homomorphism \(\delta\) is compatible with norms and restrictions, we can instead characterize \(s_p\) with a single commutative diagram using Mackey functors:

\[\begin{tikzcd}
	{(E/p)^{\mprod 2}(k)} && {K_2(k;E)/p} \\
	{H^1(-,E[p])^{\mprod 2}(k)} && {H^2(k, E[p]^{\otimes 2})}
	\arrow[from=1-1, to=1-3]
	\arrow["{\delta^{\otimes 2}}"', from=1-1, to=2-1]
	\arrow["{s_p}", from=1-3, to=2-3]
	\arrow["{\cor_{-/k}\circ \smile}"', from=2-1, to=2-3]
\end{tikzcd}\]

The first step is to reduce the problem to only the formal groups, using the decomposition of Mackey functors \(E/p \cong \widehat{E}/p \oplus [E/\widehat{E}]/p\) from \autoref{rmk:splitting_of_Mackey_cotorsion}.

\begin{lemma}
    (See, e.g., \cite[Section 3]{Takemoto_2011})
    The composition
    \[\begin{tikzcd}
        {(E/p)^{\mprod 2}(k)} & {H^1(-,E[p])^{\mprod 2}(k)} && {H^2(k, E[p]^{\otimes 2})}
        \arrow["{\delta^{\otimes 2}}"', from=1-1, to=1-2]
        \arrow["{\cor_{-/k}\circ \smile}"', from=1-2, to=1-4]
    \end{tikzcd}\]
    decomposes as the direct sum of the compositions
    \[\begin{tikzcd}
        {(\widehat{E}/p\mprod \widehat{E}/p)(k)} & {\big(H^1(-,\widehat{E}[p])\mprod H^1(-,\widehat{E}[p])\big)(k)} && {H^2(k, \widehat{E}[p]\otimes \widehat{E}[p])} \\
        {(\widehat{E}/p\mprod [E/\widehat{E}]/p)(k)} & {\big(H^1(-,\widehat{E}[p])\mprod H^1(-,\bar{E}[p])\big)(k)} && {H^2(k, \widehat{E}[p]\otimes \bar{E}[p])} \\
        {([E/\widehat{E}]/p\mprod \widehat{E}/p)(k)} & {\big(H^1(-,\bar{E}[p])\mprod H^1(-,\widehat{E}[p])\big)(k)} && {H^2(k, \bar{E}[p]\otimes \widehat{E}[p])} \\
        {([E/\widehat{E}]/p\mprod [E/\widehat{E}]/p)(k)} & {\big(H^1(-,\bar{E}[p])\mprod H^1(-,\bar{E}[p])\big)(k)} && {H^2(k, \bar{E}[p]\otimes \bar{E}[p])}
        \arrow["{\delta^{\otimes 2}}"', from=1-1, to=1-2]
        \arrow["{\cor_{-/k}\circ \smile}"', from=1-2, to=1-4]
        \arrow["{\delta^{\otimes 2}}"', from=2-1, to=2-2]
        \arrow["{\cor_{-/k}\circ \smile}"', from=2-2, to=2-4]
        \arrow["{\delta^{\otimes 2}}"', from=3-1, to=3-2]
        \arrow["{\cor_{-/k}\circ \smile}"', from=3-2, to=3-4]
        \arrow["{\delta^{\otimes 2}}"', from=4-1, to=4-2]
        \arrow["{\cor_{-/k}\circ \smile}"', from=4-2, to=4-4]
    \end{tikzcd}\]
    where the maps \(\delta\) are the appropriate connecting homomorphisms from the Kummer sequences on \(\widehat{E}\) and \(\bar{E}\).
\end{lemma}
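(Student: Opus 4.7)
The plan is to show that each of the four operations in the composition --- the Mackey product, the connecting homomorphism $\delta$, the cup product, and the corestriction --- is compatible in the appropriate sense with the direct sum decompositions coming from \autoref{rmk:splitting_of_Mackey_cotorsion} and \autoref{lem:ses_for_p_(co)torsion}.

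First, I would invoke \autoref{rmk:splitting_of_Mackey_cotorsion}, which gives $E/p \cong \widehat{E}/p \oplus [E/\widehat{E}]/p$ as Mackey functors. Since the Mackey product $\mprod$ is a tensor product on the (additive) category of Mackey functors, it is biadditive, so
\[
(E/p)^{\mprod 2} \cong \bigoplus_{F_1, F_2 \in \{\widehat{E}/p,\, [E/\widehat{E}]/p\}} F_1 \mprod F_2,
\]
providing the decomposition of the first column. A parallel decomposition of the last column follows from \autoref{lem:ses_for_p_(co)torsion}: the splitting $E[p] \cong \widehat{E}[p] \oplus \bar{E}[p]$ of $G$-modules induces a splitting of $E[p]^{\otimes 2}$ into the four tensor summands, and hence of $H^2(k, E[p]^{\otimes 2})$.

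Next, the middle column decomposes by the same token, since $H^1(-, E[p])$ splits as a Mackey functor into $H^1(-, \widehat{E}[p]) \oplus H^1(-, \bar{E}[p])$ by additivity of Galois cohomology, and then the Mackey product is biadditive again. The key observation is then that the connecting homomorphism $\delta$ for $E$ is compatible with this splitting: applying the snake lemma to the commutative diagram of short exact sequences of Mackey functors
\[
\begin{tikzcd}
0 \ar[r] & \widehat{E} \ar[r] \ar[d, "p"'] & E \ar[r] \ar[d, "p"'] & [E/\widehat{E}] \ar[r] \ar[d, "p"] & 0 \\
0 \ar[r] & \widehat{E} \ar[r] & E \ar[r] & [E/\widehat{E}] \ar[r] & 0
\end{tikzcd}
\]
(which is split, by \autoref{rmk:splitting_of_Mackey_cotorsion}), together with the analogous diagrams on Galois cohomology, shows by naturality that $\delta$ for $E$ restricts on each summand of $E/p$ to the corresponding connecting map for $\widehat{E}$ or for $\bar{E} \cong [E/\widehat{E}]$.

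Finally, the cup product $H^1(\ell, A) \otimes H^1(\ell, B) \to H^2(\ell, A \otimes B)$ is bilinear in $A$ and $B$, so it respects the direct sum decomposition of $E[p] \otimes E[p]$, and the corestriction map is additive. Assembling these compatibilities, the entire composition $\cor_{-/k} \circ \smile \circ\, \delta^{\otimes 2}$ splits as the direct sum of the four stated compositions. The only subtlety --- which is really the main (mild) obstacle --- is verifying that the splitting of $E/p$ obtained from reduction and the splitting of $E[p]$ from \autoref{lem:ses_for_p_(co)torsion} are the same splitting used implicitly on cohomology; this is precisely the content of the naturality argument above, since both splittings are induced by the canonical Mackey-functorial decomposition of $E$ modulo $p$.
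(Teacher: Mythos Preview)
The paper does not supply its own proof of this lemma; it is stated inside the proof of \autoref{thm:K2_mod_p_structure} with only the citation to \cite[Section 3]{Takemoto_2011}, and then used immediately. Your argument is correct and is essentially what such a proof must look like: biadditivity of the Mackey product (which the paper records by noting that \(\mprod\) is a tensor product on Mackey functors), additivity of Galois cohomology in the coefficients, bilinearity of the cup product, and additivity of corestriction all give the decomposition of the columns and of the second arrow. The only point with any content is the compatibility of the Kummer boundary \(\delta\) with the splitting, and your naturality argument handles it: the section \(\bar{E}[p^n] \cong E[\bar{\pi}^n] \hookrightarrow E[p^n]\) constructed in the proof of \autoref{lem:ses_for_p_(co)torsion} and the Mackey splitting of \autoref{rmk:splitting_of_Mackey_cotorsion} are induced by the same \(G\)-equivariant section of the reduction map, so the resulting morphism of Kummer short exact sequences forces \(\delta_E = \delta_{\widehat{E}} \oplus \delta_{\bar{E}}\) by functoriality of the connecting homomorphism.
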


Since the left-most term of the last three of these compositions vanishes by \cite[Proof of Theorem 3.14]{Gazaki_Leal_2021} and \cite[Lemma 3.4.2]{Raskind_Spiess_2000}, we see that \(s_p\) must have image entirely contained in \(H^2(k, \widehat{E}[p]^{\otimes 2})\) and fits into the commutative diagram
\[\begin{tikzcd}
	{(\widehat{E}/p)^{\mprod 2}(k)} && {K_2(k;E)/p} \\
	{H^1(-,\widehat{E}[p])^{\mprod 2}(k)} && {H^2(k, \widehat{E}[p]^{\otimes 2})}
	\arrow[from=1-1, to=1-3]
	\arrow["{\delta^{\otimes 2}}"', from=1-1, to=2-1]
	\arrow["{s_p}", from=1-3, to=2-3]
	\arrow["{\cor_{-/k}\circ \smile}"', from=2-1, to=2-3]
\end{tikzcd}\]

The key idea is to now relate the cup product above to the usual Hilbert symbol. Since \(E[p]\subseteq E(k)\) and thus \(\mu_p\subseteq k\), fixing a \(\Gal(\bar{k}/k)\)-module isomorphism \(\widehat{E}[p]\cong \mu_p\) gives corresponding isomorphisms of Mackey functors \(H^1(-, \widehat{E}[p]) \cong H^1(-, \mu_p) \cong \mathbb{G}_m/p\) and \(H^2(k, \widehat{E}[p]^{\otimes 2}) \cong H^2(k, \mu_p\otimes \mu_p)\). Fixing a primitive \(p\)-th root of unity \(\zeta\) defines an isomorphism \(H^2(k, \mu_p \otimes \mu_p) \cong H^2(k, \mu_p) = \Br(k)[p]\). By \cite[Chapter XIV Section 2]{Serre_1979}, we then have that under these isomorphisms, for any \(\ell/k\) finite the cup product on \(H^1(\ell, \mu_p)^{\otimes 2}\) corresponds to the Hilbert symbol on \((\ell^{\times}/p)^{\otimes 2}\) taking \(\alpha\otimes \beta\) to the cyclic algebra \((\alpha,\beta)_\zeta\) as defined in \cite[Chapter 15]{Milnor_1972}.

By \autoref{cor:delta_Mackey_isomorphism}, we have that the image of \(\delta:\widehat{E}/p \to \mathbb{G}_m/p\) is the sub-Mackey functor \(\bar{U}^1 \cong \bar{U}^0\). We may then conclude using \cite[Lemma 3.3]{Hiranouchi_2016} that the Hilbert symbol gives an isomorphism \((\bar{U}^0 \mprod \bar{U}^0)(k) \cong \Br(k)[p]\). Thus, the generalized Galois symbol is an isomorphism \(K_2(k;E) \cong \Br(k)[p]\), and this isomorphism is given by the map
\[
\{P,Q\}_{\ell/k} \mapsto \cor_{\ell/k} \big( \delta(\widehat{P}), \delta(\widehat{Q})\big)_\zeta. \qedhere
\]
\end{pf}

Recall that for \(\alpha,\beta\in k\), the cyclic algebra \((\alpha,\beta)_\zeta\) is trivial in \(\Br(k)\) if and only if \(\beta\) is a norm from \(k(\sqrt[p]{\alpha})\) \cite[Theorem 15.7]{Milnor_1972}.

Using the isomorphism in the above theorem, we obtain a criterion for checking non-triviality of local symbols via the valuations of the formal components of the points involved, provided that \(k/\mathbb{Q}_p\) has no inertia.

\begin{lemma}\label{lem:nontriviality_via_valuations}
    Let \(k/\mathbb{Q}_p\) finite with valuation \(v\), and let \(E/k\) an elliptic curve of good ordinary reduction with CM by some \(\mathcal{O}_K\) such that \(E[p]\subseteq E(k)\).

    Let \(P,Q\in E(k)\). If \(k/\mathbb{Q}_p\) is totally ramified of degree \(p-1\) and \(v(\widehat{P}) + v(\widehat{Q}) = p\), then \(\{P,Q\}_{k/k}\) is non-trivial modulo \(p\).
\end{lemma}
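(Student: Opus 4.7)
The plan is to translate the non-triviality question for $\{P,Q\}_{k/k}$ into a Hilbert symbol computation via \autoref{thm:K2_mod_p_structure}, and then to apply the classical non-degeneracy of the Hilbert symbol on critical graded pieces of the unit filtration of $k$. By \autoref{thm:K2_mod_p_structure}, it suffices to exhibit that the Hilbert symbol $(\delta(\widehat{P}), \delta(\widehat{Q}))_\zeta$ is non-trivial in $\Br(k)[p]$, where $\delta : \widehat{E}(\mathfrak{m}_k)/p \to k^\times / p$ is the Kummer connecting homomorphism attached to $\widehat{E}$.

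First I would locate the images $\delta(\widehat{P})$ and $\delta(\widehat{Q})$ inside the appropriate graded pieces. Since $\widehat{P}, \widehat{Q} \in \widehat{E}(\mathfrak{m}_k)$ have valuations $\geq 1$ and sum to $p$, both $v(\widehat{P})$ and $v(\widehat{Q})$ lie in $\{1, \ldots, p-1\}$. \autoref{lem:point_valuations_are_the_same_mod_p} then guarantees that the images of $\widehat{P}$ and $\widehat{Q}$ in $\widehat{E}(\mathfrak{m}_k)/p$ lie in $\mathcal{D}^{v(\widehat{P})}_k \setminus \mathcal{D}^{v(\widehat{P})+1}_k$ and $\mathcal{D}^{v(\widehat{Q})}_k \setminus \mathcal{D}^{v(\widehat{Q})+1}_k$ respectively. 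Combining this with the compatibility of $\delta$ with the filtrations established in \autoref{thm:delta_filtered_isomorphism} gives $\delta(\widehat{P}) \in \bar{U}^{v(\widehat{P})}_k \setminus \bar{U}^{v(\widehat{P})+1}_k$ and analogously for $\delta(\widehat{Q})$.

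The final step is to invoke the standard non-degenerate pairing for the Hilbert symbol on the critical graded pieces of the unit filtration: for a local field $k$ containing $\mu_p$ of absolute ramification index $e$, setting $e' = pe/(p-1)$, the Hilbert symbol induces a non-degenerate pairing
\[
\bar{U}^i_k / \bar{U}^{i+1}_k \times \bar{U}^{e'-i}_k / \bar{U}^{e'-i+1}_k \longrightarrow \Br(k)[p] \cong \mu_p
\]
for $1 \leq i \leq e' - 1$; this is classical and may be extracted from \cite[Chapter XIV]{Serre_1979}, or alternatively from the filtered refinement of the Mackey-level statement \cite[Lemma 3.3]{Hiranouchi_2016} already used in the proof of \autoref{thm:K2_mod_p_structure}. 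Under our hypothesis that $k/\mathbb{Q}_p$ is totally ramified of degree $p-1$, we have $e = p-1$ and $e' = p$, and the hypothesis $v(\widehat{P}) + v(\widehat{Q}) = p$ places the pair $(\delta(\widehat{P}), \delta(\widehat{Q}))$ exactly on the critical diagonal where the pairing is non-degenerate. Non-triviality of both graded images therefore forces $(\delta(\widehat{P}), \delta(\widehat{Q}))_\zeta \neq 0$ in $\Br(k)[p]$, completing the argument.

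The main obstacle I anticipate is extracting the precise graded non-degeneracy statement cleanly, since various references normalize the critical filtration level differently and some give only the ungraded non-degeneracy of the Hilbert symbol on $k^\times / p$. If needed, I would prove the graded non-degeneracy directly in this restricted setting using either Iwasawa's explicit formula for the Hilbert symbol on $U^i \times U^{e'-i}$ or dimension-counting on $U^1/p$ as a filtered $\mathbb{F}_p$-vector space; once that ingredient is in hand, the remainder of the argument is formal.
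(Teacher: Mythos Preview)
Your outline is essentially the same strategy as the paper's proof: reduce via \autoref{thm:K2_mod_p_structure} to a Hilbert symbol, use \autoref{lem:point_valuations_are_the_same_mod_p} and \autoref{thm:delta_filtered_isomorphism} to place $\delta(\widehat{P})$ and $\delta(\widehat{Q})$ in the correct graded pieces of $\bar{U}^\bullet_k$, and then appeal to the behaviour of the Hilbert symbol on those graded pieces. The paper carries out this last step more explicitly (identifying $k(\sqrt[p]{\alpha})$ with $k([p]^{-1}\widehat{P})$, reading off the jump in the ramification filtration from \cite[Lemma 2.1.5]{Kawachi_2002}, and then using \cite[Section V.3]{Serre_1979} to compute the norm group), but the content is the same.

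One point you should make explicit: the inference ``the graded pairing is non-degenerate, so nonzero inputs give a nonzero output'' is not valid from non-degeneracy alone. What makes it work here is that $k/\mathbb{Q}_p$ is totally ramified, so $\mathbb{F}_k=\mathbb{F}_p$ and each graded piece $\bar{U}^i_k/\bar{U}^{i+1}_k$ is one-dimensional over $\mathbb{F}_p$; on one-dimensional spaces, non-degeneracy does force any pair of nonzero elements to pair nontrivially. The paper singles this out (``both $\mathbb{F}_k$ and $\Br(k)[p]$ are of order $p$, the former by the assumption that $k/\mathbb{Q}_p$ is totally ramified''), and the remark immediately following the proof emphasizes that this is exactly where the argument would break down in the presence of inertia. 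So your plan is fine, but you should state and use $\mathbb{F}_k=\mathbb{F}_p$ at that step rather than leaving it implicit.
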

\begin{pf} (Cf. \cite[Theorem 4.7, Proof of Claim 1]{Gazaki_Koutsianas_2024})
Let \(i = v(\widehat{P})\) and \(j = v(\widehat{Q})\), and fix a primitive \(p\)-th root of unity \(\zeta\).
By \autoref{lem:point_valuations_are_the_same_mod_p}, \autoref{thm:delta_filtered_isomorphism} and the isomorphism in \autoref{thm:K2_mod_p_structure}, it suffices to show that the cyclic algebra \((\alpha, \beta)_\zeta\) is non-trivial for \(\alpha = \delta(\widehat{P}) \in \bar{U}_{k}^i\setminus \bar{U}_{k}^{i+1}\) and \(\beta = \delta(\widehat{Q}) \in \bar{U}_{k}^j\setminus \bar{U}_{k}^{j+1}\), i.e.\ that \(\beta \not\in N_{k(\sqrt[p]{\alpha})/k}(k(\sqrt[p]{\alpha})^\times)\). 

As before, \(k(\sqrt[p]{\alpha}) = k([p]^{-1}\widehat{P})\), where \([p]:\widehat{E} \to \widehat{E}\) is the multiplication-by-\(p\) formal group isogeny. Since this isogeny can be written as \([p](T) = a_1T + O(T^2)\) with \(a_1 = p\), we see that \([p]\) has height 1, and the value \(t = v(a_1)/p-1\) appearing in \cite[Lemma 2.1.5]{Kawachi_2002} is given by \(t=1\). Thus, since \(1\leq i<p\) the extension \(k(\sqrt[p]{\alpha})/k\) is cyclic and totally ramified of degree \(p\), and the jump in the ramification filtration on \(\Gal(k(\sqrt[p]{\alpha})/k)\) happens at \(s=p-i\). We can then conclude using \cite[Section V.3]{Serre_1979} that we have an isomorphism 
\[U_k^{p-i} / N(U^{p-i}_{k(\sqrt[p]{\alpha})}) \cong k^{\times}/N(k(\sqrt[p]{\alpha})^\times) \underset{\sim}{\xrightarrow{(\alpha,-)_\zeta}} \Br(k)[p]\] 
which induces a surjective map
\[
    U_k^{p-i}/U_k^{p-i+1} = U_k^{p-i} / N(U^{p-i+1}_{k(\sqrt[p]{\alpha})}) \twoheadrightarrow \Br(k)[p].
\]
This map then factors through $\bar{U}_k^{p-i}/\bar{U}_k^{p-i+1}$, which by \cite[Lemma 2.1.4]{Kawachi_2002} is isomorphic to $\mathbb{F}_k$. Noting that both $\mathbb{F}_k$ and $\Br(k)[p]$ are of order $p$ (the former by the assumption that $k/\mathbb{Q}_p$ is totally ramified), we get that $\bar{U}_k^{p-i}/\bar{U}_k^{p-i+1} \to \Br(k)[p]$ is in fact an isomorphism, and conclude that any element of \(\bar{U}_k^{p-i}\setminus \bar{U}_k^{p-i+1}\) pairs non-trivially with \(\alpha\). Since \(i+j = p\) and \(\beta \in \bar{U}_k^{j}\setminus \bar{U}_k^{j+1}\), we see that \(\beta\) is exactly such an element.
\end{pf}

\begin{remark}
    The assumption that \(k/\mathbb{Q}_p\) has no inertia is currently essential to guarantee non-triviality; if there is any inertia at all, the residue field \(\mathbb{F}_k\) is no longer 1-dimensional, and the kernel of \(\paren{\alpha,-}_\zeta\) has non-trivial intersection with \(U_k^{p-i}\setminus U_k^{p-i+1}\). The exact nature of this kernel would need to be unravelled in order to get an analogous criterion in this case.
\end{remark}

\subsection{The adelic Albanese kernel for products of elliptic curves}\label{ssec:curves_considered}

Now suppose that \(E\) is defined over the field \(K\) itself. Here, the assumptions that \(E\) has complex multiplication by \(\mathcal{O}_K\) and \(p\) is a prime splitting in \(K/\mathbb{Q}\) imply that \(E\) has good ordinary reduction at either of the places \(v\mid p\) of \(K\). For \(F/K\) a finite extension, we can associate to \(F\), \(E\), and \(v\) a field extension \(L/F\) defined by \(L = F(E[\pi])\), where \(\pi\) generates the ideal of \(\mathcal{O}_K\) corresponding to \(v\). 

\begin{lemma}\label{lem:places_of_L_above_p_structure}~
    \samepage
    \begin{enumerate} 
        \item Suppose that \(p\geq 5\). Then the extension \(K(E[\pi])/K\) is of degree \(p-1\), is totally ramified at \(v\), and is unramified at all other places of \(K\).
        \item The places of \(F\) lying above \(v\) are totally ramified in \(L/F\), and all other places of \(F\) are unramified in \(L/F\). If \(w\) is a place of \(L\) lying above \(v\), then its absolute ramification index is divisible by \(p-1\).
    \end{enumerate}
\end{lemma}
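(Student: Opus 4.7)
The plan is to establish Part (1) via a combination of local analysis at the two primes above $p$ and a CM-theoretic argument at the remaining places, and then to deduce Part (2) by studying the compositum $L = F \cdot K(E[\pi])$ place by place.

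For Part (1), the bound $[K(E[\pi]):K] \mid p-1$ comes from the fact that $\Gal(\bar K/K)$ acts $\mathcal{O}_K$-linearly on $E[\pi]$, which is free of rank one over $\mathcal{O}_K/\pi$; this yields an embedding $\Gal(K(E[\pi])/K) \hookrightarrow (\mathcal{O}_K/\pi)^\times \cong \mathbb{F}_p^\times$. To match the bound I will localize at $v$, where $K_v \cong \mathbb{Q}_p$ by the splitting assumption. Letting $\tilde\pi$ be the associate of $\pi$ identified earlier such that $[\tilde\pi]$ reduces to Frobenius, the relations $[\tilde\pi](T) = \tilde\pi T + O(T^2)$ and $[\tilde\pi](T) \equiv T^p \pmod{\pi}$ exhibit $\widehat{E}/\mathcal{O}_{K_v}$ as a Lubin-Tate formal group for the uniformizer $\tilde\pi$. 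Lubin-Tate theory then gives $K_v(\widehat{E}[\tilde\pi])/K_v$ totally ramified abelian of degree exactly $p-1$; since $E[\pi] = E[\tilde\pi] = \widehat{E}[\tilde\pi]$ at $v$ (using \autoref{lem:ses_for_p_(co)torsion}), the global degree is $p-1$ with total ramification at $v$.

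For unramifiedness at the other places, I consider $\bar v$ and $w \nmid p$ separately. At $\bar v$, $\pi$ is a unit in $\mathcal{O}_{K,\bar v}$, so $[\pi]$ acts invertibly on $\widehat{E}$ locally and $\widehat{E}[\pi] = 0$ there. Thus $E[\pi]$ lands entirely in the étale part of $E[p^\infty]$ at $\bar v$ and extends to an étale finite flat group scheme over $\mathcal{O}_{K,\bar v}$, making $K_{\bar v}(E[\pi])/K_{\bar v}$ unramified. For $w \nmid p$, the wild inertia at $w$ (pro-$\ell$ for $\ell$ the residue characteristic) acts trivially on the $p$-element group $E[\pi]$, so inertia factors through tame inertia, yielding a character $I_w \to \mathbb{F}_p^\times$. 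The theory of complex multiplication (e.g.\ Shimura's \emph{Introduction to the Arithmetic Theory of Automorphic Functions}, or Silverman's \emph{Advanced Topics on Elliptic Curves}, Chapter II) identifies the Galois action on $T_\pi(E)$ with the $\pi$-adic avatar of the Hecke character of $E$, and the mod-$\pi$ reduction of this character is unramified away from $v$. I expect this CM input to be the main point where one relies on an external result rather than a direct calculation.

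For Part (2), write $L = F \cdot K(E[\pi])$. Let $w$ be a finite place of $L$ lying over a place $v'$ of $F$, itself lying over a place $u$ of $K$. If $u \neq v$, Part (1) makes $K(E[\pi])_u/K_u$ unramified, so $L_w/F_{v'}$ is unramified. If $u = v$, then $K(E[\pi])_v/K_v$ is totally ramified of degree $p-1$, so its intersection with $F_{v'}$ inside a fixed algebraic closure is a totally ramified subextension of degree $e' \mid p-1$ over $K_v$, and $L_w \cong F_{v'} \cdot K(E[\pi])_v$ is totally ramified over $F_{v'}$ of degree $(p-1)/e'$. For the absolute ramification index, the tower $\mathbb{Q}_p \subset K_v \subset F_{v'} \subset L_w$ gives
\[
e(w/p) = e(w/v') \cdot e(v'/v) \cdot e(v/p) = \tfrac{p-1}{e'} \cdot e(v'/v),
\]
using $e(v/p) = 1$ by the splitting of $p$ in $K$. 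Since $e' \mid e(v'/v)$ because the intermediate field embeds into $F_{v'}$, writing $e(v'/v) = e' m$ gives $e(w/p) = (p-1) m$, proving divisibility by $p-1$.
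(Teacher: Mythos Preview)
Your proof is correct and reaches the same conclusions as the paper, but by a more hands-on route. The paper's argument is very brief: it cites Rubin's CM lecture notes directly, invoking \cite[Corollary 5.20(ii),(iv)]{Rubin_1999} for the degree $p-1$ and the total ramification at $v$, and \cite[Corollary 3.17]{Rubin_1999} (applied locally at each $v'\neq v$, using only that $\pi\in\mathcal{O}_{K_{v'}}^\times$) for unramifiedness at all other places; Part (2) is then dismissed as ``routine algebraic number theory.'' You instead unpack these citations. Your Lubin--Tate identification of $\widehat{E}$ at $v$ is exactly the mechanism behind Rubin's 5.20(iv), and your \'etale/finite-flat argument at $\bar v$ is one concrete instance of what Rubin's 3.17 encodes. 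For Part (2) your compositum computation with $L = F\cdot K(E[\pi])$, including the divisibility $e'\mid e(v'/v)$ via the intermediate totally ramified field $M=F_{v'}\cap K(E[\pi])_v$, is precisely the routine step the paper omits.

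The one place your argument and the paper's genuinely diverge is at primes $w\nmid p$: the paper handles $\bar v$ and $w\nmid p$ uniformly via a single local citation, whereas you treat $\bar v$ by the formal-group argument and invoke the Hecke character only for $w\nmid p$. Your assertion that the mod-$\pi$ Hecke character is ``unramified away from $v$'' is exactly the same external CM input the paper is absorbing into its Rubin citation, so you are not introducing any new gap; but note that at primes of bad reduction for $E$ this statement is not obvious from the tame-inertia reduction alone (the finite inertial image in $\mathcal{O}_{K,\pi}^\times$ need not land in $1+\pi\mathcal{O}_{K,\pi}$), and really does require the CM description of the local character you cite.
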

\begin{proof}
    Note that (2) follows immediately from (1) by routine algebraic number theory.

    By \cite[Corollary 5.20.(ii)]{Rubin_1999}, we have an isomorphism \[\Gal(K(E[\pi])/K) \to \big(\mathcal{O}_K/(\pi)\big)^\times \cong \mathbb{F}_p^\times,\] so \(K(E[\pi])/K\) has degree \(|\mathbb{F}_p^\times| = p-1\). Total ramification at \(v\) follows from part (iv) of the same result. For any other place \(v'\) of \(K\), note that \(\pi \in \mathcal{O}_{K_{v'}}^\times\). Then \(K(E[\pi])/K\) is unramified at \(v'\) if and only if \(K_{v'}(E_{v'}[\pi])/K_{v'}\) is unramified, and this holds by \cite[Corollary 3.17]{Rubin_1999}.
\end{proof}

We wish to investigate the behavior of the complex appearing in the statement of \autoref{prop:reduction_to_K2} for the curve \(E_L\) and the prime \(p\). An important tool for understanding the middle term of this complex is the following:

\begin{proposition}\label{prop:no_torsion_implies_divisible}
    Let \(k/\mathbb{Q}_p\) finite, and let \(E/k\) an elliptic curve with good ordinary reduction and complex multiplication. If \(E[p]\) is not \(k\)-rational, then \(K_2(k;E)\) is \(p\)-divisible.
\end{proposition}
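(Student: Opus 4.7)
The goal is to prove $K_2(k;E)/p = 0$, which is equivalent to $p$-divisibility. I will combine the generalized Galois symbol with local Tate duality, passing to the extension $L = k(E[p])$ in a residual case.

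Following the same reductions as in the proof of \autoref{thm:K2_mod_p_structure}, the generalized Galois symbol gives an injection $s_p : K_2(k;E)/p \hookrightarrow H^2(k, \widehat{E}[p]^{\otimes 2})$, via the CM splitting of $E[p]$ from \autoref{lem:ses_for_p_(co)torsion} and the vanishing of the Mackey product cross-terms involving $[E/\widehat{E}]/p$. Denote by $\chi_{\widehat{E}}$ and $\bar{\chi}$ the $G_k$-characters of $\widehat{E}[p]$ and $\bar{E}[p]$ respectively; the Weil pairing restricts to an isomorphism $\widehat{E}[p] \otimes \bar{E}[p] \cong \mu_p$, yielding $\chi_{\widehat{E}} = \chi_{\mathrm{cyc}} \cdot \bar{\chi}^{-1}$, where $\chi_{\mathrm{cyc}}$ is the mod-$p$ cyclotomic character. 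A key observation is that $E[p] \not\subseteq E(k)$ forces $\widehat{E}[p] \not\subseteq E(k)$: otherwise $\chi_{\mathrm{cyc}} = \bar{\chi}$ on $G_k$, and since $\bar{\chi}$ is unramified while $\chi_{\mathrm{cyc}}|_{G_k}$ is ramified unless $\mu_p \subseteq k$, both sides would have to be trivial, forcing $E[p] \subseteq E(k)$ and contradicting the hypothesis.

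By local Tate duality, $H^2(k, \widehat{E}[p]^{\otimes 2})$ is Pontryagin dual to $H^0(k, \chi_{\mathrm{cyc}}^{-1}\bar{\chi}^2)$, which vanishes unless $\chi_{\mathrm{cyc}} = \bar{\chi}^2$ on $G_k$. Applying the same ramification comparison, this exceptional case forces $\mu_p \subseteq k$ and $\bar{\chi}$ to have order exactly $2$ (so $p$ odd). Outside this residual sub-case, $K_2(k;E)/p = 0$ follows immediately from the injectivity of $s_p$. To eliminate the residual sub-case, pass to $L = k(E[p])$: since the Galois action on $E[p]$ factors through $(\mathcal{O}_K/p)^\times$, whose order is coprime to $p$, the degree $[L:k]$ is coprime to $p$, and the standard identity $\cor_{L/k} \circ \res_{L/k} = [L:k]$ makes $\res_{L/k} : K_2(k;E)/p \hookrightarrow K_2(L;E_L)/p$ injective. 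Applying \autoref{thm:K2_mod_p_structure} over $L$ identifies the target with $\Br(L)[p]$, and a direct analysis of the image of a symbol under the Hilbert symbol formula, combined with the non-$\Gal(L/k)$-equivariance of the identification $\widehat{E}[p] \cong \mu_p$ (governed by the now-nontrivial $\bar{\chi}$), shows that the image in $\Br(L)[p]$ lands in an isotypic component disjoint from the restriction of $\Br(k)[p]$, and therefore vanishes.

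The main obstacle is this residual sub-case: standard cohomological vanishing fails, and one must carefully track the image of arbitrary symbols in $\Br(L)[p]$ under a twisted $\Gal(L/k)$-action. This requires either explicit symbol manipulation within the Mackey product structure or appeal to related structural results from \cite{Raskind_Spiess_2000} and \cite{Gazaki_Leal_2021} on the Somekawa $K$-groups of elliptic curves with good ordinary reduction over local fields.
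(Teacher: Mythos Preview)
The paper itself gives no argument for this proposition; it simply cites the proof of Proposition~4.4 in \cite{Gazaki_Koutsianas_2024}. So there is nothing in the paper to compare your outline against directly.

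Your strategy---reduce the image of $s_p$ to $H^2(k,\widehat{E}[p]^{\otimes 2})$ via the CM splitting and the vanishing of the Mackey cross-terms, then kill that $H^2$ by Tate duality---is sound in spirit, and the character bookkeeping ($\chi_{\widehat E}=\chi_{\mathrm{cyc}}\bar\chi^{-1}$, hence $\widehat E[p]\not\subset E(k)$ whenever $E[p]\not\subset E(k)$; $H^2$ vanishes unless $\chi_{\mathrm{cyc}}=\bar\chi^2$, forcing $\mu_p\subset k$ and $\bar\chi^2=1$) is correct. One point you should tighten: the injectivity of $s_p$ over $k$ does not follow from the reductions in \autoref{thm:K2_mod_p_structure} as stated, since that theorem assumes $E[p]\subseteq E(k)$ and deduces injectivity only after invoking the Hilbert-symbol isomorphism. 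You can repair this by first restricting to $L=k(E[p])$ (where $\res$ is injective on $K_2/p$ because $[L:k]$ is prime to $p$), applying \autoref{thm:K2_mod_p_structure} over $L$, and noting that the image sits in the $\Gal(L/k)$-invariants of $H^2(L,\widehat E[p]^{\otimes 2})$, which coincide with $H^2(k,\widehat E[p]^{\otimes 2})$.

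The genuine gap is the residual case $\mu_p\subset k$, $\bar\chi$ of order $2$. Your isotypic-component sketch relies on the identification $\widehat E[p]\cong\mu_p$ being non-$\Gal(L/k)$-equivariant, but the Galois symbol lands in $H^2(L,\widehat E[p]^{\otimes 2})$, and $\widehat E[p]^{\otimes 2}$ carries the character $\bar\chi^2=1$: it is \emph{canonically} isomorphic to $\mu_p$ as a $G_k$-module. Hence the identification $K_2(L;E)/p\cong\Br(L)[p]$ is $\Gal(L/k)$-equivariant after all, and since $\Gal(L/k)$ acts trivially on $\Br(L)[p]$ (the local invariant map is Galois-stable), the invariants are the whole group. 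No isotypic constraint arises; equivalently, $\res\circ\cor=1+\sigma$ acts as multiplication by $2$ on $K_2(L;E)/p$, so one cannot conclude vanishing this way. You flag this yourself in the final paragraph as requiring ``explicit symbol manipulation'' or outside structural input, which is an honest admission that the proof is incomplete precisely here---and the mechanism you sketch for closing it does not work.
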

\begin{pf}
    See \cite[Proof of Proposition 4.4]{Gazaki_Koutsianas_2024}.
\end{pf}

Combining these with the results of the previous section, we obtain the following:

\begin{theorem}\label{thm:nontriviality_criterion}
    Let \(E/K\) be an elliptic curve with complex multiplication by \(\mathcal{O}_K\), let \(v\mid p\) be a place of good ordinary reduction for \(E\) with \(p\geq 5\), and let \(F/K\) be a finite extension. Construct \(L/F\) as above, and let \(w\mid p\) a place of \(L\).

    \begin{enumerate}
        \item For \(w\mid \bar{v}\), if \(e_{\bar{v}}(F/K) < p-1\) then
        \begin{align*}
            \limi_n K_2(L_w;E_w)/p^n = 0
        \end{align*}
        \item For \(w\mid v\), let \(u\) the place of \(F\) lying below \(w\). If \(p \nmid |\bar{E_u}(\mathbb{F}_u)|\), then
        \begin{align*}
            \limi_n K_2(L_w;E_w)/p^n = 0.
        \end{align*}
        Otherwise, we have an isomorphism
        \begin{align*}
            K_2(L_w;E_w)/p &\cong H^2(L_w,\mu_p) = \Br(L_w)[p] \cong \mathbb{Z}/p\\
        \curly{P,Q}_{k/L_w} &\mapsto \cor_{k/L_w}\paren{\delta(\widehat{P}), \delta(\widehat{Q})}_\zeta.
        \end{align*}
        If \(e_v(F/K) < p\), then
        \begin{align*}
            \limi_n K_2(L_w;E_w)/p^n = K_2(L_w;E_w)/p \cong \mathbb{Z}/p.
        \end{align*}
    \end{enumerate}
\end{theorem}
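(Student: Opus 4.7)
The argument splits by the behavior of $w$ over $p$; in each case, the goal is either to apply \autoref{prop:no_torsion_implies_divisible} (when $E[p]$ is not $L_w$-rational, forcing $p$-divisibility of $K_2$) or to invoke \autoref{thm:K2_mod_p_structure} (when it is). In all cases, $E$ has good ordinary reduction at $w$ because $p$ splits in $K/\mathbb{Q}$.

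For Part (1), I would show $E[\bar{\pi}] \not\subseteq E(L_w)$. By the symmetric analogue of \autoref{lem:places_of_L_above_p_structure}(1), the extension $K_{\bar{v}}(E[\bar{\pi}])/K_{\bar{v}}$ is totally ramified of degree $p-1$. Letting $u'$ be the place of $F$ below $w$, \autoref{lem:places_of_L_above_p_structure}(2) gives that $L_w/F_{u'}$ is unramified (since $u' \mid \bar{v}$ is not above $v$), and $K_{\bar{v}}/\mathbb{Q}_p$ is unramified as $p$ splits in $K$. Hence the absolute ramification index of $L_w$ equals $e_{\bar{v}}(F/K) < p-1$, which is strictly smaller than that of $K_{\bar{v}}(E[\bar{\pi}])$, so $L_w$ cannot contain $E[\bar{\pi}]$. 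Thus $E[p] \not\subseteq E(L_w)$ and \autoref{prop:no_torsion_implies_divisible} gives $p$-divisibility of $K_2(L_w; E_w)$, forcing the inverse limit to vanish.

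For Part (2), first suppose $p \nmid |\bar{E}_u(\mathbb{F}_u)|$. Since $L_w/F_u$ is totally ramified, $\mathbb{F}_w = \mathbb{F}_u$. The unramified $G$-module isomorphism $E[\bar{\pi}] \cong \bar{E}[p]$ from the proof of \autoref{lem:ses_for_p_(co)torsion} gives $E[\bar{\pi}](L_w) = \bar{E}[p](\mathbb{F}_w) = 0$, so again $E[p] \not\subseteq E(L_w)$ and \autoref{prop:no_torsion_implies_divisible} applies. Otherwise, $p \mid |\bar{E}_u(\mathbb{F}_u)|$ combined with ordinariness of $\bar{E}$ forces $\bar{E}[p] \subseteq \bar{E}(\mathbb{F}_w)$, so $E[\bar{\pi}] \subseteq E(L_w)$; together with $E[\pi] \subseteq E(L)$ from the construction of $L$, this yields $E[p] \subseteq E(L_w)$, and \autoref{thm:K2_mod_p_structure} produces the stated isomorphism. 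Local class field theory then identifies $\Br(L_w)[p] \cong \mathbb{Z}/p$.

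The final statement, that $\limi_n K_2(L_w; E_w)/p^n = K_2(L_w; E_w)/p$ when $e_v(F/K) < p$, is the main obstacle. Since the inverse system has surjective transition maps and stabilizes at $\mathbb{Z}/p$ iff $pK_2(L_w;E_w)$ is $p$-divisible, it suffices to exhibit a decomposition $K_2(L_w; E_w) \cong \mathbb{Z}/p \oplus D$ with $D$ a $p$-divisible subgroup. The hypothesis $e_v(F/K) < p$, combined with $e(L_w/F_u) = p-1$, gives the absolute ramification bound $e(L_w/\mathbb{Q}_p) < p(p-1)$, which is the critical threshold in the Kawachi filtration analysis underlying \autoref{thm:delta_filtered_isomorphism}. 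My plan is to lift the filtration-preserving computation used in the proof of \autoref{thm:K2_mod_p_structure} to the $p^n$ level: under this ramification bound, the successive quotients of the induced filtration on $K_2(L_w;E_w)$ beyond the top step pair trivially into $\Br(L_w)[p^n]$ under the Hilbert symbol (as the relevant $\bar{U}^i_{L_w}/\bar{U}^{i+1}_{L_w}$ lie strictly above the ramification jump of $L_w(\sqrt[p^n]{\alpha})/L_w$), showing that these higher pieces are $p$-divisible. This is precisely the technical core of the theorem, and I anticipate it would need to be carried out along the lines of \cite[Theorem 4.4]{Gazaki_Koutsianas_2024}.
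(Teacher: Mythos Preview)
Your treatment of Part (1) and the first two subcases of Part (2) follows the paper's argument, with only a cosmetic difference in Part (1): where the paper observes $\mu_p \not\subset L_w$ (since $\mathbb{Q}_p(\mu_p)/\mathbb{Q}_p$ has ramification index $p-1$ while $L_w/F_{u'}$ is unramified at $\bar v$), you equivalently argue $E[\bar\pi] \not\subseteq E(L_w)$ via the symmetric version of \autoref{lem:places_of_L_above_p_structure}. Both routes feed directly into \autoref{prop:no_torsion_implies_divisible}, and the case $p \mid |\bar{E}_u(\mathbb{F}_u)|$ is handled identically via \autoref{thm:K2_mod_p_structure}.

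The divergence is in the final statement. You correctly isolate the bound $e(L_w/\mathbb{Q}_p) = e_v(F/K)\cdot(p-1) < p(p-1)$, but then propose to rebuild a Kawachi-type filtration argument at the $p^n$ level to extract $p$-divisibility of $pK_2$. The paper's proof is much shorter: since $\mathbb{Q}_p(\mu_{p^2})$ has absolute ramification index $p(p-1)$ and $e(L_w/\mathbb{Q}_p)$ is prime to $p$, the extension $L_w(E_w[p^2])/L_w$ (which contains $L_w(\mu_{p^2})$) is wildly ramified; one then invokes \cite[Theorem 3.14]{Gazaki_Leal_2021} directly to conclude that $pK_2(L_w;E_w)$ is $p$-divisible, so the inverse limit collapses to $K_2(L_w;E_w)/p$. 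Your proposed route is not wrong in spirit---such a filtration computation is presumably what underlies the Gazaki--Leal result---but it is unnecessary here, and the reference you name (\cite{Gazaki_Koutsianas_2024}) is not the one the paper actually uses for this step.
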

\begin{pf}
Fix a place \(w\mid \bar{v}\), and suppose that \(e_{\bar{v}}(F/K) < p-1\). Since \(\mathbb{Q}_p(\mu_p)/\mathbb{Q}_p\) has ramification index \(p-1\) and by \autoref{lem:places_of_L_above_p_structure} \(K(E[\pi])/K\) is unramified at \(\bar{v}\), we see that \(\mu_p \not\subset L_w\). Thus, \(E_w[p]\) is not \(L_w\)-rational, and by \autoref{prop:no_torsion_implies_divisible} we have \(\limi_n K_2(L_w;E_w)/p^n = 0\).

Now suppose that \(w\mid v\). Since \(E\) has good ordinary reduction and complex multiplication, by \autoref{lem:ses_for_p_(co)torsion} we have a \(\Gal(\bar{L_w}/L_w)\)-module isomorphism \(E_w[p] \cong \widehat{E_w}[p] \oplus \bar{E_w}[p]\). Examining the proof of this Lemma, we note that \(\widehat{E_w}[p]\) coincides with \(E_w[\pi]\), which is \(L\)-rational (and thus \(L_w\)-rational) by construction.  It follows that \(E_w[p]\) is \(L_w\)-rational if and only if \(\bar{E_w}[p]\) is, and since \(E\) has good ordinary reduction this is equivalent to having \(p \mid |\bar{E_w}(\mathbb{F}_w)|\). By \autoref{lem:places_of_L_above_p_structure}, the extension \(L/F\) is totally ramified at \(u\), so \(\mathbb{F}_w \cong \mathbb{F}_u\) and we have that \(|\bar{E_w}(\mathbb{F}_w)| = |\bar{E_u}(\mathbb{F}_u)|\). Thus, if \(p \nmid |\bar{E_u}(\mathbb{F}_u)|\), then \(E_w[p]\) is not \(L_w\)-rational and we again have that \(\limi_n K_2(L_w;E_w)/p^n = 0\). Otherwise, \autoref{thm:K2_mod_p_structure} gives the desired isomorphism
\begin{align*}
    K_2(L_w;E_w)/p &\cong \Br(L_w)[p] \cong \mathbb{Z}/p.
\end{align*}

Finally, we note that since \(\mathbb{Q}_p(\mu_{p^2})\) has absolute ramification index divisible by \(p\), the assumption that \(e_v(F/K) < p\) implies that \(L_w(E_w[p^2]) / L_w\) is wildly ramified. We then have by \cite[Theorem 3.14]{Gazaki_Leal_2021} that \(p K_2(L_w;E_w)\) is \(p\)-divisible, and so \(\limi_{n}K_2(L_w;E_w)/p^n = K_2(L_w;E_w)/p\).
\end{pf}

\begin{corollary}\label{cor:size_of_middle_term}
    Let \(E/K\) be an elliptic curve with complex multiplication by \(\mathcal{O}_K\), let \(v\mid p\) a place of \(K\) of good ordinary reduction for \(E\) with \(p\geq 5\), and let \(F/K\) be an extension of degree \([F:K] < p-1\). Suppose that \(p \mid |\bar{E_v}(\mathbb{F}_v)|\). Construct \(L\) as above, and let \(X = (E\times E)_L\) Then
    \begin{align*}
        \limi_n F^2_{\mathbb{A}}(X)/p^n \cong \prod_{w\mid v} \limi_n K_2(L_w; E_w)/p^n \cong (\mathbb{Z}/p)^m,
    \end{align*}
    where \(m\) is the number of places of \(F\) lying above \(v\).
\end{corollary}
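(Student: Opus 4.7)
My plan is to apply \autoref{thm:nontriviality_criterion} place by place, after reducing $\limi_n F^2_{\mathbb{A}}(X)/p^n$ to a product of local Somekawa $K$-groups. The first step is to use the fact that $X = E \times E$ is a product of elliptic curves, so the isomorphism recalled in \autoref{ssec:filtration_on_CH} identifies $F^2(X_{L_w}) \cong K_2(L_w; E_w)$ for every finite place $w$ of $L$. Passing to the adele groups (the real-place $2$-torsion contributions die modulo $p$ because $p \geq 5$), I would obtain
\[
    \limi_n F^2_{\mathbb{A}}(X)/p^n \cong \prod_{w} \limi_n K_2(L_w; E_w)/p^n,
\]
with the product ranging over the finite places of $L$. \autoref{lem:places_away_from_p_are_trivial} then kills the factors at all places $w \nmid p$, so only places of $L$ lying above $p$ contribute.

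Next, since $p = v\bar{v}$ splits in $K/\mathbb{Q}$, every place of $L$ above $p$ lies above either $v$ or $\bar{v}$. The hypothesis $[F:K] < p-1$ forces $e_{\bar{v}}(F/K) \leq [F:K] < p-1$, so \autoref{thm:nontriviality_criterion}(1) kills the factor at every $w \mid \bar{v}$, reducing the product to $\prod_{w \mid v} \limi_n K_2(L_w; E_w)/p^n$.

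Finally, for each place $w \mid v$ I would apply \autoref{thm:nontriviality_criterion}(2); its hypotheses are easy to verify. The bound $e_v(F/K) \leq [F:K] < p$ gives the ramification condition, while the containment $\bar{E_v}(\mathbb{F}_v) \subseteq \bar{E_u}(\mathbb{F}_u)$ (where $u$ is the place of $F$ below $w$) together with the assumption $p \mid |\bar{E_v}(\mathbb{F}_v)|$ yields $p \mid |\bar{E_u}(\mathbb{F}_u)|$. Hence each such factor is isomorphic to $\mathbb{Z}/p$. To count them, \autoref{lem:places_of_L_above_p_structure}(2) tells me that each place of $F$ above $v$ is totally ramified in $L/F$ and so has a unique place of $L$ above it; thus the number of $w \mid v$ in $L$ equals the number $m$ of places of $F$ above $v$, giving $(\mathbb{Z}/p)^m$. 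I do not anticipate a substantive obstacle here — the corollary is essentially a bookkeeping assembly of the preceding results, with the only minor verification being that $p$-divisibility of the order of $\bar{E_v}(\mathbb{F}_v)$ is inherited by the larger residue fields $\mathbb{F}_u$.
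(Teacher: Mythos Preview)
Your proposal is correct and is precisely the intended derivation: the corollary is stated in the paper without proof because it is a direct assembly of \autoref{lem:places_away_from_p_are_trivial}, \autoref{thm:nontriviality_criterion}, and \autoref{lem:places_of_L_above_p_structure}, exactly as you outline. One cosmetic remark: since $K$ is imaginary quadratic, $L$ has no real places at all, so the archimedean contribution vanishes outright rather than merely being $2$-torsion killed by $p\geq 5$; but this does not affect your argument.
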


\begin{remark}
    This result contains a corrected version of \cite[Theorem 4.2.1]{Gazaki_Koutsianas_2024}, which incorrectly states that \(\limi_n K_2(L_w;E_w)/p^n \cong \mathbb{Z}/p\) for places lying above \(\bar{v}\).
\end{remark}

Additionally, under the assumptions of \autoref{cor:size_of_middle_term} we have by \cite[Theorem 4.2, Claim 3]{Gazaki_Koutsianas_2024} that the final term of the complex (\ref{eqn:p_primary_K2_complex_narrow}) vanishes:
\begin{align*}
    \Hom\left(\frac{\Br(X)\{p\}}{\Br_1(X)\{p\}}, \mathbb{Q}/\mathbb{Z}\right) = 0.
\end{align*}
Combining this with the above Theorem, we get:
\begin{corollary}\label{cor:adelic_F2_structure}
    Let \(E/K\) be an elliptic curve with complex multiplication by \(\mathcal{O}_K\), let \(v \mid p\) a place of \(K\) of good ordinary reduction for \(E\) with \(p\geq 5\), and let \(F/K\) be a finite extension with \([F:K] < p-1\). Construct \(L/F\) as above, and suppose that \(p \mid |E_v(\mathbb{F}_v)|\).

    The complex (\ref{eqn:p_primary_F2_complex}) is exact for 
    \(X = (E\times E)_L\) if and only if \(\Delta\) is surjective, which is equivalent to having \(m\) global symbols \(w_1,\hdots, w_n \in K_2(L;E)\) with \(\Delta(w_1), \hdots, \Delta(w_n)\) all \(\mathbb{Z}/p\)-linearly independent, where \(m\) is the number of places of \(F\) above \(v\).
\end{corollary}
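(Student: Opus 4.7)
The plan is to assemble the statement as an immediate consequence of the reductions and structural results already established in this section, so I expect the argument to be essentially a short bookkeeping chain rather than a new computation.

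First, I would apply \autoref{prop:reduction_to_K2} to the elliptic curve $E_L$ over the number field $L$. This replaces exactness of the $F^2$ complex (\ref{eqn:p_primary_F2_complex}) for $X = (E\times E)_L$ with the equivalent exactness of the $K_2$ complex (\ref{eqn:p_primary_K2_complex_narrow}) over $L$, putting everything into the Somekawa setup where \autoref{thm:nontriviality_criterion} and \autoref{cor:size_of_middle_term} apply.

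Second, I would invoke the vanishing of the target, namely $\Hom(\Br(X)\{p\}/\Br_1(X)\{p\}, \mathbb{Q}/\mathbb{Z}) = 0$ under our hypotheses, which was cited from \cite[Theorem 4.2, Claim 3]{Gazaki_Koutsianas_2024} in the paragraph immediately preceding the statement. Once $\varepsilon$ has trivial target, its kernel is the entire middle term, and so exactness at the middle position of (\ref{eqn:p_primary_K2_complex_narrow}) collapses to the single requirement that $\Delta$ be surjective.

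Third, I would use \autoref{cor:size_of_middle_term} to identify the middle term with $(\mathbb{Z}/p)^m$, where $m$ is the number of places of $F$ lying above $v$. A group homomorphism into $(\mathbb{Z}/p)^m$ is surjective exactly when its image contains an $\mathbb{F}_p$-basis, i.e.\ when there exist elements in the source whose images are $\mathbb{F}_p$-linearly independent. Applied to $\Delta$, this yields the final clause of the statement.

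I do not foresee a genuine obstacle here; the corollary is really a packaging statement that converts the relevant instance of Conjecture (E) into the concrete problem of exhibiting $m$ global $K_2$-symbols with $\mathbb{F}_p$-linearly independent adelic images, which is precisely the computational task that the remaining sections of the paper are organized to carry out.
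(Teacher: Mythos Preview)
Your proposal is correct and matches the paper's own reasoning essentially verbatim: the paper derives the corollary simply by combining \autoref{cor:size_of_middle_term} with the cited vanishing of $\Hom(\Br(X)\{p\}/\Br_1(X)\{p\},\mathbb{Q}/\mathbb{Z})$ from \cite[Theorem 4.2, Claim 3]{Gazaki_Koutsianas_2024}, invoking \autoref{prop:reduction_to_K2} implicitly to pass between the $F^2$ and $K_2$ complexes.
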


\section{Locally non-trivial symbols}\label{sec:non-trivial_symbols}

Let \(K = \mathbb{Q}(\sqrt{-D})\) an imaginary quadratic field of class number 1; recall that there are finitely many such fields, which are given by a choice of \(D \in \{1,2,3, 7,11, 19, 43, 67, 163\}\). Let \(E/K\) an elliptic curve with CM by \(\mathcal{O}_K\), let \(p\geq 5\) a prime which splits in \(K/\mathbb{Q}\), and let \(v\mid p\) a place of \(K\). Recall that \(E\) has good ordinary reduction at \(v\) since \(p\) splits in \(K/\mathbb{Q}\). Let \(F/K\) a field extension in which \(v\) splits completely, let \(\pi\in \mathcal{O}_K\) an irreducible corresponding to \(v\), and set \(L = F(E[\pi])\). Let \(A\in E[\pi]\) non-zero and let $w\mid v$ a place of $L$. The goal of this section is to make more explicit the non-triviality criterion presented in \autoref{thm:nontriviality_criterion} as applied to symbols of the form \(\{A_w,P\}_{L_w/L_w} \in K_2(L_w;E)/p\), where \(P\) is chosen from \(E_u(F_u)\).

By \autoref{thm:nontriviality_criterion}, the possibility of non-zero local symbols only arises when one has \(|\bar{E_u}(\mathbb{F}_u)|\) divisible by $p$. If we further assume that \(v\) splits completely in \(F/K\), then we have \(F_u = K_v = \mathbb{Q}_p\), and so \(|\bar{E_u}(\mathbb{F}_u)| = |\bar{E_v}(\mathbb{F}_v)|\) and \(L_w / \mathbb{Q}_p\) is totally ramified of degree \(p-1\). In this case, we have the following:

\begin{proposition}\label{prop:nontriviality_via_valuations}
Suppose that \(v\) splits completely in \(F/K\) and that \(p \mid |\bar{E_v}(\mathbb{F}_v)|\). For \(P\in E_u(F_u)\),
\begin{align*}
    \curly{A_w,\res_{L_w/F_u}P}_{L_w/L_w} \neq 0 \pmod{p} \quad \Longleftrightarrow \quad u(\widehat{P}) = 1.
\end{align*}
\end{proposition}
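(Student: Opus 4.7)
The plan is to translate the symbol into a Hilbert symbol via Theorem~\ref{thm:K2_mod_p_structure}, then determine non-triviality using the filtration on $\widehat{E}/p$ together with Lemma~\ref{lem:nontriviality_via_valuations}.

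Since $v$ splits completely in $F/K$ we have $F_u = \mathbb{Q}_p$, and Lemma~\ref{lem:places_of_L_above_p_structure} then shows that $L_w/\mathbb{Q}_p$ is totally ramified of degree exactly $p-1$. Combined with $p\mid |\bar{E_u}(\mathbb{F}_u)|$, Theorem~\ref{thm:nontriviality_criterion}(2) gives an isomorphism $K_2(L_w;E_w)/p \cong \Br(L_w)[p]$ under which $\{A_w,\res P\}_{L_w/L_w}$ corresponds to the cyclic algebra $(\delta(\widehat{A_w}),\delta(\widehat{\res P}))_\zeta$. Since $A_w \in E_w[\pi] = \widehat{E_w}[p]$ we have $\widehat{A_w} = A_w$, and compatibility of the formal decomposition with base change identifies $\widehat{\res_{L_w/F_u} P}$ with the image of $\widehat{P}$ in $\widehat{E_w}(\mathfrak{m}_w)$.

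Next I would pin down the relevant valuations. A Newton polygon argument applied to $[\widetilde{\pi}](Z) = \widetilde{\pi}Z + O(Z^2)$, using the congruence $[\widetilde{\pi}](Z) \equiv Z^p \pmod{\pi}$ recalled in the excerpt together with $v_{L_w}(\widetilde{\pi}) = p-1$, shows that the Newton polygon of $[\widetilde{\pi}](Z)/Z$ has a single segment from $(0,p-1)$ to $(p-1,0)$ of slope $-1$. Hence every non-zero $\pi$-torsion point of $\widehat{E_w}$ has $v_{L_w}$-valuation equal to $1$, so $v_{L_w}(A_w) = 1$. Since $e_{L_w/F_u} = p-1$, we obtain $v_{L_w}(\widehat{\res P}) = (p-1)\,u(\widehat{P})$. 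When $u(\widehat{P}) = 1$, the two valuations sum to $p$, and Lemma~\ref{lem:nontriviality_via_valuations} immediately yields non-triviality.

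For the converse, when $u(\widehat{P}) = \infty$ (i.e., $\widehat{P} = 0$) we have $\delta(\widehat{\res P}) = 0$ and so the Hilbert symbol is trivial. When $u(\widehat{P}) \geq 2$ we get $v_{L_w}(\widehat{\res P}) \geq 2(p-1) \geq p$, so $\widehat{\res P}$ lies in $\widehat{E_w}(\mathfrak{m}_w^p)$. Via Theorem~\ref{thm:delta_filtered_isomorphism} and the analysis of $\bar{U}^1_{L_w}$ in \cite[Lemma 2.1.4]{Kawachi_2002}, this image vanishes in $\widehat{E_w}(\mathfrak{m}_w)/p$: the filtration has $p-1$ one-dimensional graded pieces and terminates at $\mathcal{D}^p_{L_w} = 0$ (equivalently, the formal logarithm gives $\widehat{E_w}(\mathfrak{m}_w)/p \cong \mathfrak{m}_w/\mathfrak{m}_w^p$ since $v_{L_w}(p) = p-1$). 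So again $\delta(\widehat{\res P}) = 0$ and the symbol is trivial. The main obstacle is this vanishing $\mathcal{D}^p_{L_w} = 0$; this is precisely where the hypothesis that $v$ splits completely in $F/K$ is essential, as it forces the absolute ramification index of $L_w$ to equal $p-1$ and so terminates the filtration at level $p$ rather than continuing further.
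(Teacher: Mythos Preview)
Your overall strategy matches the paper's: reduce to a Hilbert symbol via Theorem~\ref{thm:K2_mod_p_structure}, pin down $w(A_w)=1$ and $w(\widehat{\res P})=(p-1)\,u(\widehat P)$, then invoke Lemma~\ref{lem:nontriviality_via_valuations} for the forward direction. Your Newton-polygon computation of $w(A_w)=1$ is a perfectly good substitute for the paper's appeal to \cite[Theorem~IV.6.1]{Silverman_2009}.

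The converse, however, contains a genuine error: the claim $\mathcal{D}^p_{L_w}=0$ is false. Because $E[\pi]\subset E(L_w)$ while $E[\pi^2]\not\subset E(L_w)$, one has $\widehat{E_w}(\mathfrak m_w)\cong \mathbb{Z}/p\oplus\mathbb{Z}_p^{\,p-1}$, so $\widehat{E_w}(\mathfrak m_w)/p$ is $p$-dimensional; the graded pieces $\mathcal{D}^i/\mathcal{D}^{i+1}\cong\mathbb{F}_p$ persist for $i=1,\dots,p$, and it is $\mathcal{D}^{p+1}_{L_w}$ that vanishes, not $\mathcal{D}^p_{L_w}$. (On the unit side, a primitive $p$-th root of unity lies in $U^1_{L_w}\setminus U^2_{L_w}$, forcing the $p$-th-power map $U^1/U^2\to U^p/U^{p+1}$ to be zero and hence $(U^1_{L_w})^p=U^{p+1}_{L_w}$.) Your formal-logarithm alternative is likewise off: the logarithm is only an isomorphism on $\widehat{E_w}(\mathfrak m_w^2)$ here since $e/(p-1)=1$, and indeed $|\mathfrak m_w/p\mathfrak m_w|=p^{p-1}\neq p^{p}=|\widehat{E_w}(\mathfrak m_w)/p|$.

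The repair is short. One option: for $p\ge 3$ we have $2(p-1)\ge p+1$, so $u(\widehat P)\ge 2$ already places $\widehat{\res P}$ in $\mathcal{D}^{p+1}_{L_w}=0$. Cleaner still, argue over $F_u=\mathbb{Q}_p$ first: there the formal logarithm \emph{is} an isomorphism $\widehat{E}(\mathfrak m_u)\cong p\mathbb{Z}_p$, so $\mathcal{D}^2_u=0$ and $u(\widehat P)\ge 2$ forces $\widehat P=0$ in $\widehat{E}(\mathfrak m_u)/p$; then $\delta(\widehat{\res P})=0$ after restriction. A third route, closest in spirit to your attempt, is to note from the ramification-break computation inside the proof of Lemma~\ref{lem:nontriviality_via_valuations} (with $i=1$) that $U^p_{L_w}$ lies in the norm group of $L_w(\sqrt[p]{\delta(A_w)})/L_w$, so anything in $\bar U^p_{L_w}$ pairs trivially with $\delta(A_w)$ regardless of whether it is zero.
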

\begin{pf}
By \autoref{lem:ses_for_p_(co)torsion}, our assumption that $p \mid |\bar{E_v}(\mathbb{F}_v)|$ together with the fact that $E[\pi]\subseteq E(L)$ by construction imply that $E_w[p] \subseteq E_w(L_w)$. Thus \autoref{lem:nontriviality_via_valuations} applies, so the given symbol is non-trivial modulo \(p\) if and only if \(w(\widehat{A_w}) + w(\res_{L_w/F_u}\widehat{P}) = p\) (note that the map \(P\mapsto \widehat{P}\) commutes with restriction maps by \autoref{rmk:splitting_of_Mackey_cotorsion}). Since \(A_w \in \widehat{E}(L_w)[p]\) already, we have that \(\widehat{A_w} = A_w\). Since \cite[Theorem IV.6.1]{Silverman_2009} gives that
\begin{align*}
    1 \leq w(A_w) \leq \frac{e(L_w/\mathbb{Q}_p)}{p-1} = 1,
\end{align*}
\(w(A_w) = 1\) and it suffices to show that \(w(\res_{L_w/F_u}\widehat{P}) = p-1\) exactly when \(u(\widehat{P}) = 1\).

To do this, note that for each \(i\) the restriction map satisfies
\begin{align*}
    \res_{L_w/F_u}\big(\mathcal{D}_{u}^i\big) \subseteq \mathcal{D}_w^{i\cdot(p-1)}.
\end{align*}
Since composition with the norm going the other way is just multiplication by \(\bracket{L_w :F_u} = p-1\), this restriction map is injective. Thus, for \(P\in E_u(F_u)\) we have that \(\widehat{P}\in \mathcal{D}_{u}^i\) if and only if \(\res_{L_w/F_u}(\widehat{P})\in \mathcal{D}_{L_w}^{i(p-1)}\), and applying this for \(i=1\) we're done.
\end{pf}

\begin{remark}
    This non-triviality criterion is equivalent to that in \cite[Theorem 4.7]{Gazaki_Koutsianas_2024} when \(P\) is defined over \(\mathbb{Q}\).
\end{remark}

It is reasonable to wonder at this point how often these assumptions can actually be met, since attaining \(p \mid |\bar{E_u}(\mathbb{F}_u)|\) for a fixed elliptic curve may require going up to an extension \(F/K\) which has inertial degree \(p-1\) at \(v\) (namely, the extension \(F = K(E[\bar{\pi}])/K\)). We say that a tuple \((E, v\mid p)\) is \textit{admissible} for \(K\) if \(p\geq 5\) is a prime which splits in \(K/\mathbb{Q}\) and \(E\) is an elliptic curve over \(K\) with complex multiplication by \(\mathcal{O}_K\) such that \(p \mid |\bar{E_v}(\mathbb{F}_v)|\). Using this language, we wish to know for which \(K\) and which values of \(p\) an admissible tuple can be found.

Letting \(\bar{v}\) the other place of \(K\) lying above \(p\), we note that \((E, v\mid p)\) is admissible if and only if \((\prescript{\sigma}{}{E}, \bar{v}\mid p)\) is admissible, where \(\prescript{\sigma}{}{E}\) denotes the elliptic curve obtained by applying the non-trivial automorphism of \(K/\mathbb{Q}\) to the coefficients of \(E\) \cite[Theorem II.2.2]{Silverman_1994}. In particular, existence of admissible tuples does not depend on the choice of \(v \mid p\).

\begin{lemma}\label{lem:all_cases_for_nontriv_via_vals_propn}~
    \begin{enumerate}
        \item If \(D = 1\), then all admissible tuples for \(K\) have \(p=5\) and \(|\bar{E_v}(\mathbb{F}_v)| = 10\).
        \item If \(D \in \{2, 7\}\), then there are no admissible tuples for \(K\).
        \item Otherwise, there are at least 2 (and possibly infinitely many) primes \(p\) for which there exist admissible tuples \((E,p)\) for \(K\), and all have \(p = |\bar{E_v}(\mathbb{F}_v)|\).
    \end{enumerate}
\end{lemma}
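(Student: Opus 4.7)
To begin, I would translate admissibility at a split prime \(p\) into an arithmetic condition on the trace of the Frobenius endomorphism \(a_v\). Admissibility requires \(p \mid p + 1 - a_v\), i.e.\ \(a_v \equiv 1 \pmod{p}\), and the Hasse bound gives \(|a_v| \leq 2\sqrt{p}\). Together these force \(a_v = 1\) for \(p \geq 7\), with the additional option \(a_v = -4\) arising at \(p = 5\). Since \(E\) has CM by \(\mathcal{O}_K\) with \(K\) of class number 1, the Frobenius at \(v\) corresponds to a generator \(\pi \in \mathcal{O}_K\) of the prime below \(v\), so \(\pi\bar\pi = p\); conversely, any such generator \(\pi\) is realized as the Frobenius of a suitable twist of a fixed CM curve over \(K\). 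Admissibility then reduces to asking: for which \(D\) and \(p\) does \(\mathcal{O}_K\) contain an element \(\pi\) of norm \(p\) with \(\pi + \bar\pi \in \{1, -4\}\)?

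Next, I would dispose of \(D \in \{1, 2, 7\}\). For \(D \in \{1, 2\}\), every element of \(\mathcal{O}_K = \mathbb{Z}[\sqrt{-D}]\) has even trace, so \(a_v = 1\) is impossible; the next admissible residue \(a_v = 1 - p\) satisfies the Hasse bound only when \(p - 1 \leq 2\sqrt{p}\), i.e.\ \(p \leq 5\). Since \(5\) splits in \(\mathbb{Q}(i)\) but is inert in \(\mathbb{Q}(\sqrt{-2})\), only \(D = 1\) survives, giving case (1) with \(a_v = -4\) and \(|\bar E_v(\mathbb{F}_v)| = 10\). For \(D = 7\), the condition \(a_v = 1\) with \(\pi = (1 + k\sqrt{-7})/2\) reduces to \(4p = 1 + 7k^2\); a parity argument modulo \(8\) rules this out for odd primes \(p \geq 5\), since odd \(k\) forces \(4p \equiv 0 \pmod 8\) and hence \(p\) even, while even \(k\) gives \(4p \equiv 1 \pmod 4\), which is absurd.

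For case (3), where \(D \equiv 3 \pmod 4\) and \(D \neq 7\), an element of \(\mathcal{O}_K = \mathbb{Z}[(1+\sqrt{-D})/2]\) has trace 1 and norm \(p\) iff \(4p = 1 + Dk^2\) for some odd \(k\). I would verify the claim by exhibiting at least two solutions with \(p\) an odd prime \(\geq 5\) for each \(D\); small odd \(k\) always suffice, for instance \(k = 1, 3\) for \(D \in \{19, 43, 67, 163\}\), \(k = 3, 5\) for \(D = 3\), and \(k = 9, 15\) for \(D = 11\). The conjectural infinitude of such primes follows from Bunyakovsky/Hardy--Littlewood applied to the integer-valued polynomial \((1 + Dk^2)/4\) restricted to odd \(k\).

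The main obstacle I anticipate is the existence-of-a-twist step: producing, for each valid triple \((K, p, \pi)\), an explicit elliptic curve \(E/K\) whose Hecke character \(\psi_E\) satisfies \(\psi_E(\mathfrak{p}) = \pi\). This follows from the classical theory of twists of CM elliptic curves (using that \(K\) has class number 1, so a canonical CM curve \(E_0\) exists with \(j(E_0) \in \mathbb{Q}\)), but with the caveat that for \(D = 1\) and \(D = 3\) the unit group \(\mathcal{O}_K^\times\) is strictly larger than \(\{\pm 1\}\). In these exceptional cases one must enlarge the twist family (to quartic twists for \(D = 1\) and sextic twists for \(D = 3\)) to ensure that every associate of \(\pi\) in \(\mathcal{O}_K\) is realized as a Frobenius generator, which requires slightly more careful bookkeeping with the associated Hecke characters.
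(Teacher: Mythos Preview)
Your proposal is correct and follows essentially the same route as the paper for the non-existence claims: the Hasse bound reduction to $a_v\in\{1,-4\}$, the parity-of-trace argument for $D\in\{1,2\}$, and the mod-$8$ obstruction for $D=7$ are all identical to what the paper does.

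The genuine methodological difference lies in the \emph{existence} steps. The paper handles these by brute force: it fixes the one-parameter family $y^2=f_a(x)$ from \cite{Joux_Morain_1995}, runs through the finitely many residues of $a$ modulo $v$ with a Sage script, and records (i) that $a\equiv 3\pmod v$ gives $|\bar E_v(\mathbb{F}_5)|=10$ when $D=1$, and (ii) a table of primes $p<1000$ for each remaining $D$. Your approach instead argues that every generator $\pi$ of norm $p$ is realized as the Frobenius of some twist, and then exhibits explicit odd $k$ with $(1+Dk^2)/4$ prime. Your version is cleaner conceptually and makes the Bunyakovsky heuristic for infinitude transparent, but the twist-realization step you flag as ``the main obstacle'' is exactly what the paper's computer search sidesteps; the paper never needs Hecke characters or the quartic/sextic twist bookkeeping for $D\in\{1,3\}$.

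One small gap to close: in case (3) you only treat trace $1$, but the statement also asserts that \emph{all} admissible tuples for these $D$ have $|\bar E_v(\mathbb{F}_v)|=p$, so you must explicitly exclude $a_v=-4$ at $p=5$. This is immediate---an element of $\mathcal{O}_K$ with trace $-4$ and norm $5$ would be a root of $x^2+4x+5$, hence equal to $-2\pm i$, which lies in $\mathcal{O}_K$ only for $D=1$---but it should be said.
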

\begin{proof}
    Let \(E:y^2 = f_a(x)\) denote the family of elliptic curves with complex multiplication by \(\mathcal{O}_K\) given in \cite[Tableau 1]{Joux_Morain_1995}. Note that without loss of generality, we may assume that \(a\in \mathcal{O}_K\). Also, since since \(p\) splits in \(K/\mathbb{Q}\) by assumption, \(\mathbb{F}_v = \mathbb{F}_p\), and the order of the special fiber \(|\bar{E_v}(\mathbb{F}_v)|\) depends only on the residue of \(a\) modulo \(v\).

    We first consider the case where \(|\bar{E_v}(\mathbb{F}_v)| = np\) for some \(n\geq 2\). By the Hasse bound, 
    \[
        (n-1)p - 1 = \big| |\bar{E_v}(\mathbb{F}_v)| - (p+1) \big| \leq 2\sqrt{p}.
    \]
    The only way this inequality can hold for primes \(p\geq 5\) is if \(n = 2\) and \(p=5\), and an exhaustive search using a Sage script of the five possible residues of \(a\) modulo \(v\) for each family shows that the only occurance of this is when \(D=1\) and \(a\equiv 3 \pmod{v}\).

    We now wish to show that for \(D \in \{1,2,7\}\), there are no \(p\geq 5\) and \(E/K\) as above for which \(p = |\bar{E_v}(\mathbb{F}_v)|\). By \cite{Deuring_1941}, we have that \(|\bar{E_v}(\mathbb{F}_v)| = p + 1 - (\pi + \bar{\pi})\), where \(\pi\bar{\pi}\) is some factorization of \(p\) in \(\mathcal{O}_K\) (specifically, that for which the endomorphism \([\pi]\) restricts to the Frobenius endomorphism modulo \(v\)).
    
    For \(D=1,2\), one has that \(\mathcal{O}_K = \mathbb{Z}[\sqrt{-D}]\). Writing \(\pi = b + c \sqrt{-D}\) we see that \(\pi + \bar{\pi} = 2b\) is always even, and thus the equality \(|\bar{E_v}(\mathbb{F}_v)| = p\) can never occur.

    For all other \(D\), we have that \(\mathcal{O}_K = \mathbb{Z}[\frac{1 + \sqrt{-D}}{2}]\). Writing \(\pi = \frac{b}{2} + \frac{c}{2}\sqrt{-D}\), we see that \(\pi + \bar{\pi} = b\). Assuming that \(p\) is such that \(|\bar{E_v}(\mathbb{F}_v)| = p\), this gives \(b = 1\), and the equality \(\pi\bar{\pi} = p\) implies that \(c\) is an integral solution to \(4p = 1+ Dc^2\). 
    
    Suppose that there is a solution to this equation when \(D = 7\). Then \(1+7c^2\) is either \(0\) or \(4\) modulo \(8\), but in fact the latter cannot occur as \(1+7c^2 \equiv 4 \pmod{8}\) implies that \(c^2 \equiv 5\pmod{8}\), which has no solution. Thus, \(2\mid p\), a contradiction. 

    To show the final assertion, we provide for each \(D\) a complete list of the primes \({5\leq p < 1000}\) for which there exist admissible tuples \((E,p)\), obtained via checking each possible residue of \(a\) with a script in the accompanying \href{https://github.com/mwills758/locally_non-trivial_cycles/}{Sage notebook}:
    \[
        \begin{tabular}[b]{c|l}
            \(D\) & \(p\)\\ \hline
            3 & 7, 19, 37, 61, 127, 271, 331, 397, 547, 631, 919\\
            11 & 223, 619\\
            19 & 5, 43, 233\\
            43 & 11, 97, 269\\
            67 & 17, 151, 419, 821\\
            163 & 41, 367.
        \end{tabular}\qedhere
    \]
\end{proof}

\begin{remark}
    For \(D = 3\), it is noted in \cite[Example 2.4]{Gazaki_Koutsianas_2024} that any prime \(p\) of the form \(4p = 1+3c^2\) can form an admissible tuple \((E,p)\). These are the \textit{Cuban primes}, which are also characterized as being the difference of consecutive cubes, and appear as sequence A002407 in the OEIS. This sequence is conjectured to be infinite. The other sequences of primes in the above table do not presently appear in the OEIS.
\end{remark}

\begin{remark}\label{rmk:minimal_models_for_admissible_tuples}
    Fix \(K = \mathbb{Q}(\sqrt{-D})\) as above, and let \(y^2 = f_a(x)\) the corresponding equation from \cite[Tableau 1]{Joux_Morain_1995}. From the computations at the end of the proof of \autoref{lem:all_cases_for_nontriv_via_vals_propn}, we see that none of the primes dividing the coefficients of \(f_a(x)\) form an admissible tuple for \(K\). Consequently, if \((E,v\mid p)\) is admissible for \(K\), then \(E\) has a minimal model over \(\mathcal{O}_K\) which is in short Weierstrass form and has coefficients of \(v\)-adic valuation zero.
\end{remark}

\subsection{The \texorpdfstring{\(\bar{\pi}\)}{pi\_bar}-torsion of \texorpdfstring{\(E\)}{E}}
Let \((E, v\mid p)\) be admissible for \(K = \mathbb{Q}(\sqrt{-D})\), and let \(\pi\), \(F\), \(L\), \(A\), \(w\), and \(u\) be as at the beginning of \autoref{sec:non-trivial_symbols}. By \autoref{lem:all_cases_for_nontriv_via_vals_propn}, we may write \(|\bar{E_v}(\mathbb{F}_v)| = dp\) where \(d\) is 2 if \(D= 1\) and \(p = 5\), and \(d= 1\) otherwise. For \(P \in E_u(F_u)\), non-triviality of \(\{A_w,\res P\}_{L_w/L_w}\) modulo \(p\) is equivalent to having \(u(\widehat{P}) = 1\) by \autoref{prop:nontriviality_via_valuations}, and by \autoref{lem:P_hat_computation} computing this valuation requires understanding the \(x\)-coordinates of points in \(E_u[\bar{\pi}]\). The goal of this section is to characterize such \(x\)-coordinates.

Our main tool for doing so will be analyzing the kernel polynomial associated to \([\bar{\pi}]\in \End_K(E)\). This is a polynomial \(\phi_{\bar{\pi}} \in K[x]\) of degree \(\frac{p-1}{2}\), the \(\bar{K}\)-roots of which are exactly those values which appear as \(x\)-coordinates of points in \(E[\bar{\pi}]\). That \(E_u[\bar{\pi}]\) is \(F_u\)-rational means that \(\phi_{\bar{\pi}}\) splits into linear factors over \(F_u = \mathbb{Q}_p\), and \autoref{lem:P_hat_computation} gives that these roots are all distinct modulo \(u\).

It will be useful to simultaneously compute the \(\bar{\pi}\)-torsion of all elliptic curves with the same CM type. To that end, we see by \cite[Tableau 1]{Joux_Morain_1995} that all elliptic curves with complex multiplication by \(\mathcal{O}_K\) are parametrized in a 1-dimensional family by \(E_a: y^2 = f_a(x)\), where \(f_a(x)\) has (or can be put into) the form 
\begin{align*}
    f_a(x) = \begin{cases}
        x^3 + ax & D = 1\\
        x^3 + a & D = 3\\
        x^3 + n_Da^2 x + m_Da^3 & \text{otherwise}
    \end{cases}
\end{align*}
for some integers \(n_D,m_D\) depending on \(D\), and $a$ ranges over $K^\times$. We may regard this as an elliptic curve \(\mathcal{E}\) over \(\mathbb{G}_m\). Further, $\mathcal{E}$ has complex multiplication by $\mathcal{O}_K$, and we let $\Phi_{\bar{\pi}}(x)$ denote the kernel polynomial of \([\bar{\pi}] \in \End_{\mathbb{G}_m}(\mathcal{E})\) (recall that $p\neq 2$, so there is no dependence on $y$). This polynomial has coefficients in $K[a]$, and regarded as an element $\Phi_{\bar{\pi}}(x,a) \in K[x,a]$ has $x$-degree $\frac{p-1}{2}$. The complex multiplication on \(\mathcal{E}\) specializes to that on each of the fibers, from which it follows that if $E$ is the fiber of $\mathcal{E}$ lying above some $a_0\in K^\times$, then the kernel polynomial $\phi_{\bar{\pi}}(x)$ of $[\bar{\pi}]$ as an endomorphism of $E$ is given by $\phi_{\bar{\pi}}(x) = \Phi_{\bar{\pi}}(x,a_0)$. 
We similarly let \(\Phi_p\) and \(\Phi_\pi\) denote the kernel polynomials of \([p]\) and \([\pi]\) on \(\mathcal{E}\).

\begin{lemma}\label{lem:pi_bar_kernel_poly_facts}
    \begin{enumerate}
        \item \(\Phi_{\bar{\pi}}\) has coefficients in \(\mathcal{O}_K\) and is homogeneous of degree \(\frac{p-1}{2}\) after weighting \(a\) by
        \begin{align*}
            w = \frac{|\mathcal{O}_K^\times|}{2} = \begin{cases}2 & D =1 \\ 3 & D = 3\\ 1 & \text{otherwise.}\end{cases}
        \end{align*}
        \item \(\Phi_{p}\) factors into irreducibles in \(K[x,a]\) (and thus in \(\mathcal{O}_K[x,a]\)) as \(\Phi_p = \Phi_{\pi}\cdot \Phi_{\bar{\pi}}\cdot g\) for some \(g\in K[x,a]\).
    \end{enumerate}
\end{lemma}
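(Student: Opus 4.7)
For part (1), my plan is to exploit the natural $\mathbb{G}_m$-action on the family $\mathcal{E}\to \mathbb{G}_m$. For $u\in \bar{K}^\times$, the substitution $(x,y,a)\mapsto (u^2 x, u^3 y, u^{2w} a)$ defines an isomorphism $\mathcal{E}_a \xrightarrow{\sim} \mathcal{E}_{u^{2w}a}$; this is verified case-by-case by inspecting $f_a$ for $D=1$, $D=3$, and all other $D$, and the exponent $2w=|\mathcal{O}_K^\times|$ is forced by each equation. Since $[\bar{\pi}]$ is the canonical $\mathcal{O}_K$-endomorphism it commutes with this isomorphism, so the isomorphism maps $\mathcal{E}_a[\bar{\pi}]$ onto $\mathcal{E}_{u^{2w}a}[\bar{\pi}]$ while multiplying $x$-coordinates by $u^2$. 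Comparing the resulting factorizations of $\Phi_{\bar{\pi}}$ gives the functional equation $\Phi_{\bar{\pi}}(u^2 x, u^{2w} a)=u^{p-1}\Phi_{\bar{\pi}}(x,a)$, which is exactly weighted homogeneity of degree $(p-1)/2$ with $x$ of weight $1$ and $a$ of weight $w$. For integrality of the coefficients one can compute $\Phi_{\bar{\pi}}$ via V\'elu's formulas applied to the integral Weierstrass model $\mathcal{E}$; alternatively, the claim can be deferred until after part (2) and recovered from the integrality of the classical $p$-division polynomial $\Phi_p$ together with the monicity of $\Phi_{\bar{\pi}}$ in $x$, via Gauss's lemma in the UFD $\mathcal{O}_K[a]$.

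For part (2), the factorization comes from the $\mathcal{O}_K$-module decomposition $\mathcal{E}[p]\cong \mathcal{E}[\pi]\oplus \mathcal{E}[\bar{\pi}]$ arising from $p=\pi\bar{\pi}$ with $\pi,\bar{\pi}$ coprime in $\mathcal{O}_K$ (the same observation underlying \autoref{lem:ses_for_p_(co)torsion}). This partitions the nonzero $p$-torsion into three Galois-stable subsets, namely the nonzero $\pi$-torsion, the nonzero $\bar{\pi}$-torsion, and the points of exact order $p$, and yields over $\overline{K(a)}$ a corresponding decomposition $\Phi_p=\Phi_\pi \cdot \Phi_{\bar{\pi}}\cdot g$ where $g$ has $x$-degree $(p-1)^2/2$; the three $x$-degrees sum to $(p^2-1)/2=\deg_x \Phi_p$ as a check. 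Each factor is Galois-stable over $K(a)$ and monic in $x$, so each lies in $K(a)[x]$, and by Gauss's lemma (using $\Phi_p\in \mathcal{O}_K[a][x]$ together with the fact that $\mathcal{O}_K[a]$ is a UFD) each factor actually lies in $\mathcal{O}_K[a][x]$.

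It remains to establish irreducibility of each of $\Phi_\pi$, $\Phi_{\bar{\pi}}$, $g$ in $K[x,a]$. By Gauss's lemma once more, it suffices to check irreducibility in $K(a)[x]$, and this reduces to showing that the Galois representation of $G_{K(a)}$ on the generic fiber $\mathcal{E}_\eta$ has image all of $\mathrm{Aut}_{\mathcal{O}_K}(\mathcal{E}_\eta[\pi])\cong \mathbb{F}_p^\times$, all of $\mathrm{Aut}_{\mathcal{O}_K}(\mathcal{E}_\eta[\bar{\pi}])\cong \mathbb{F}_p^\times$, and all of $\mathrm{Aut}_{\mathcal{O}_K}(\mathcal{E}_\eta[p])=(\mathcal{O}_K/p)^\times$, respectively; the orbit structures of these actions then match the three claimed $x$-degrees. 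This generic surjectivity is the main obstacle, and I would handle it by specialization: it suffices to exhibit a single $a_0\in K^\times$ at which the specialized Galois image already fills the target, since the generic image contains every specialized image. Such $a_0$ may be extracted from the families of \autoref{lem:all_cases_for_nontriv_via_vals_propn}, taking any $E/K$ with CM by $\mathcal{O}_K$ for which the relevant torsion is not $K$-rational and for which the cyclic Galois image generates the full multiplicative group. Granting this, the decomposition in the previous paragraph is the irreducible factorization of $\Phi_p$ in $K[x,a]$.
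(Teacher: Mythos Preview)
Your argument for part (1) is essentially the same as the paper's: both exploit the twist isomorphism between fibers of $\mathcal{E}$ to deduce the functional equation giving weighted homogeneity. The paper phrases this as comparing $\mathcal{E}_a$ with $\mathcal{E}_1$ via $(x,y)\mapsto (a^{-1/w}x,a^{-3/2w}y)$, while you use the equivalent $\mathbb{G}_m$-action, but the content is identical. One small caution: the paper takes $\Phi_p$ to have leading $x$-coefficient $p$ (it is the classical division polynomial), so your Gauss-lemma integrality argument should not assert that \emph{all three} factors are monic; rather, normalize $\Phi_\pi,\Phi_{\bar\pi},g$ to be monic and observe that each divides $\Phi_p$ in $K(a)[x]$, whence integrality follows.

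For part (2), your setup of the factorization via the $\mathcal{O}_K$-module splitting $\mathcal{E}[p]=\mathcal{E}[\pi]\oplus\mathcal{E}[\bar\pi]$ matches the paper. Your route to irreducibility, however, diverges: you want to show that the specialized Galois image on $E[\pi]$, $E[\bar\pi]$, and $E[p]$ is the full automorphism group $(\mathcal{O}_K/\pi)^\times$, $(\mathcal{O}_K/\bar\pi)^\times$, $(\mathcal{O}_K/p)^\times$ respectively, and then conclude by transitivity on $x$-coordinates. This is a valid and in fact cleaner strategy than the paper's, which proves irreducibility of $\phi_\pi,\phi_{\bar\pi}$ from $[K(E[\pi]):K]=p-1$ and then handles $g$ via a separate ramification-and-splitting argument at the primes above $p$. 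The gap in your proposal is the justification of the full image: you cite \autoref{lem:all_cases_for_nontriv_via_vals_propn}, but that lemma concerns admissibility ($p\mid|\bar E_v(\mathbb{F}_v)|$) and says nothing about Galois images. What you actually need is the standard CM fact, used throughout the paper, that $\Gal(K(E[\mathfrak{a}])/K)\cong(\mathcal{O}_K/\mathfrak{a})^\times$ for any ideal $\mathfrak{a}$ (Rubin, Corollary~5.20(ii)); applying this with $\mathfrak{a}=(\pi),(\bar\pi),(p)$ at \emph{any} specialization $a_0\in K^\times$ gives the surjectivity you want immediately, and your specialization-to-generic step then finishes the argument. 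With that correction your approach is complete and arguably more uniform than the paper's, since the irreducibility of all three factors follows from a single transitivity principle rather than two separate arguments.
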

\begin{proof}
    \begin{enumerate}
        \item First, note that every fiber \(\mathcal{E}_a\) with \(a\in K^\times\) is isomorphic over a suitable extension of \(K\) to the fiber \(\mathcal{E}_1\) via the isomorphism \((x,y) \mapsto (a^{-1/w}x, a^{-3/2w}y)\). This isomorphism preserves the complex multiplication, and it follows that $\Phi_{\bar{\pi}}(x,a)$ and $\Phi_{\bar{\pi}}(a^{-1/w}x, 1)$ have the same zeros. We have already observed that $\Phi_{\bar{\pi}}$ is polynomial in $x$ and $a$, so it must be that $\Phi_{\bar{\pi}}(x,a) = a^n\Phi_{\bar{\pi}}(a^{-1/w}x, 1)$ for some $n$ large enough to clear denominators. In fact, this $n$ must be as small as possible; since $\Phi_{\bar{\pi}}$ divides $\Phi_p$, the leading $x$-coefficients also satisfy this divisibility relation, and the leading coefficient of $x$ in $\Phi_p$ is simply $p$ so has no dependence on $a$. Thus, $\Phi_{\bar{\pi}}(x,a)$ is homogeneous in $x$ and $a^{1/w}$, so has the weighted homogeneity required.
        
        \item We first note that $\Phi_\pi$ and $\Phi_{\bar{\pi}}$ both divide $\Phi_p$, as $\pi$ and $\bar{\pi}$ divide $p$ in $\mathcal{O}_K = \End_{\mathbb{G}_m}(\mathcal{E})$. It remains to show that both of these are irreducible, share no common factors, and that the remaining factor of $\Phi_p$ is also irreducible. It suffices to show all of these things for a fixed $a_0 \in K^\times$. Let $\phi_{\pi}(x) = \Phi_{\pi}(x,a_0)$, and similarly define $\phi_{\bar{\pi}}(x)$ and $\phi_p(x)$; these are the kernel polynomials for the corresponding endomorphisms of the elliptic curve $E = \mathcal{E}_{a_0}$ defined over $K$.
        
        First, note that \(\phi_\pi\) is irreducible, since the extension \(K(E[\pi])/K\) is of degree \(p-1\) by \cite[Corollary 5.20.ii]{Rubin_1999}. Indeed, if there were a smaller irreducible factor, then taking a root of it together with the corresponding \(y\) coordinate would give a non-zero point \(P \in E[\pi]\) defined over an extension of degree less than \(p-1\), but \(E[\pi]\) is cyclic and this \(P\) would generate all of \(E[\pi]\), a contradiction. An identical argument holds for \(\phi_{\bar{\pi}}\). Also note that $\phi_{\pi}$ and $\phi_{\bar{\pi}}$ share no roots over $\bar{K}$, as the subgroups $E[\pi]$ and $E[\bar{\pi}]$ have trivial intersection.
        
        Now we wish to show that there are no non-trivial factors of \(\phi_p/(\phi_\pi \cdot \phi_{\bar{\pi}})\). Suppose that there is such a factor \(h\), fix one root \(x_0\) of \(h\), and let \(L'/K\) be the extension obtained by adjoining \(x_0\) as well as some \(y_0\) such that \(P = (x_0,y_0)\) is a point in \(E[p]\). This extension then has degree less than \((p-1)^2 = [K(E[p]):K]\); in particular, \(E[p](L')\) is a one-dimensional subspace of \(E[p]\). Furthermore, \(E[p](L')\) must intersect both \(E[\pi]\) and \(E[\bar{\pi}]\) trivially, as \(L'\) was formed by adjoining some \(P \in E[p] \setminus (E[\pi] \cup E[\bar{\pi}])\). 
        
        We know that \(p=\pi\cdot \bar{\pi}\) splits in \(K/\mathbb{Q}\). Further, we have by \autoref{lem:places_of_L_above_p_structure} that \(\pi\) is totally ramified in \(K(E[\pi])/K\). The same Lemma gives that \(\bar{\pi}\) is unramified in \(K(E[\pi])/K\), and in fact uniqueness in \autoref{lem:P_hat_computation} (together with an argument similar to that in the proof of \autoref{lem:when_v_splits_in_naive_quadratic} below) gives that \(\bar{\pi}\) splits completely in this extension. Thus, in \(K(E[p]) = K(E[\pi], E[\bar{\pi}])/K\), the places corresponding to \(\pi\) and \(\bar{\pi}\) each factor into \((p-1)^{\text{st}}\) powers of \(p-1\) distinct prime ideals.
        
        Now, consider the splitting behavior of \(L'/K\) at \(\pi\) and \(\bar{\pi}\). Since \([L':K] < (p-1)^2\), it cannot have the same behavior as described for \(K(E[\pi])/K\) above. In particular, we have one of the following:
        \begin{enumerate}
            \item \(\pi\) has less than \(p-1\) distinct places lying above it,
            \item the places above \(\bar{\pi}\) are ramified of degree less than \(p-1\),
            \item \(\bar{\pi}\) has less than \(p-1\) distinct places lying above it, or
            \item the places above \(\pi\) are ramified of degree less than \(p-1\).
        \end{enumerate}
        Without loss of generality, assume that either (a) or (b) holds, and consider the extension \(L'(E[\pi])/K\). Then whichever of (a) and (b) held before still holds for this new extension, so it cannot be that \(L'(E[\pi]) = K(E[p])\). However, \(E[p](L(E[\pi]))\) now contains two linearly independent points, so must be all of \(E[p]\), a contradiction. \qedhere
    \end{enumerate}
\end{proof}

If \(a\) is such that \((\mathcal{E}_a, v\mid p)\) is admissible for \(K\), then \autoref{lem:pi_bar_kernel_poly_facts} allows for the computation of the kernel polynomial \(\phi_{\bar{\pi}}\) associated to an arbitrary fiber \(\mathcal{E}_a\) as follows. First, compute the \(p^{\text{th}}\) division polynomial of \(E = E_1\) as \(\phi_p \in K[x]\). Factoring \(\phi_p\) over \(K\) into three irreducibles as above, two of degree \(\frac{p-1}{2}\) and one of degree \(\frac{(p-1)^2}{2}\). The two factors of degree \(\frac{p-1}{2}\) then correspond to \(\phi_{\bar{\pi}}\) and \(\phi_{\pi}\). Homogenizing with the appropriate weight, we get the specializations of \(\Phi_{\bar{\pi}}\) and \(\Phi_{\pi}\) to \(\mathcal{E}_a\), and these are distinguished by the presence and absence respectively of roots modulo \(\pi\).

This algorithm has been implemented in the accompanying \href{https://github.com/mwills758/locally_non-trivial_cycles/}{Sage notebook}. The computational bottleneck lies in factoring the polynomial \(\phi_p(x)\), as this polynomial has degree \(O(p^2)\); this step already takes a couple minutes for \(p\approx 100\) on a laptop. If one wanted to speed this algorithm up to access higher values of \(p\), one approach would be to explicitly describe the CM action, then use an implementation of point addition in Jacobian coordinates to directly compute the endomorphism \(\phi_{\bar{\pi}}\) as applied to a generic point \(P = (x,y)\). The main issue with this is that explicit formulas for CM beyond \(D = 7\) can get rather messy, and do not seem to be widely available. 

Once we have computed \(\phi_{\bar{\pi}}(x,a)\), we may use it to characterize the \(x\)-coordinates of the \(\bar{\pi}\)-torsion points of \(E_u\) with a simple Taylor expansion:

\begin{lemma}\label{lem:pi_bar_torsion_coeff_relation}~
    Let \(\phi = \Phi_{\bar{\pi}}\). Suppose that \(\phi(x,a)=0\) for some \(x,a\in \mathcal{O}_u\), and write \(x = x_0 + x_1p + O(p^2)\) and \(a = a_0 + a_1p + O(p^2)\). Then \(x_1\) is uniquely characterized modulo \(p\) as satisfying the equation
    \[
        \frac{\phi(x_0,a_0)}{p} + x_1 \frac{\partial \phi}{\partial x}(x_0,a_0) + a_1 \frac{\partial \phi}{\partial a}(x_0,a_0) \equiv 0 \pmod{p}.
    \]
\end{lemma}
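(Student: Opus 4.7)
The plan is to verify the congruence by a two-variable Taylor expansion of \(\phi\) around \((x_0, a_0)\), and to obtain uniqueness of \(x_1\) from the fact that, in the admissible setting for which this lemma is meant to be applied, \(x_0\) is a simple root of \(\phi(x, a_0)\) modulo \(p\).

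First, reducing the hypothesis \(\phi(x,a) = 0\) modulo \(p\) gives \(\phi(x_0, a_0) \equiv 0 \pmod{p}\), so \(\phi(x_0, a_0)/p\) is a well-defined element of \(\mathcal{O}_u\). Since \(\phi\) is polynomial, I would write
\[
\phi(x, a) = \phi(x_0, a_0) + (x - x_0)\tfrac{\partial \phi}{\partial x}(x_0, a_0) + (a - a_0)\tfrac{\partial \phi}{\partial a}(x_0, a_0) + R(x, a),
\]
where \(R\) lies in the ideal generated by \((x-x_0)^2\), \((x-x_0)(a-a_0)\), and \((a-a_0)^2\). Because \(x - x_0 \equiv x_1 p \pmod{p^2}\) and \(a - a_0 \equiv a_1 p \pmod{p^2}\), we get \(R(x, a) \equiv 0 \pmod{p^2}\). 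The equation \(\phi(x,a) = 0\) therefore reduces modulo \(p^2\) to
\[
\phi(x_0, a_0) + x_1 p\,\tfrac{\partial \phi}{\partial x}(x_0, a_0) + a_1 p\,\tfrac{\partial \phi}{\partial a}(x_0, a_0) \equiv 0 \pmod{p^2},
\]
and dividing by \(p\) and reducing modulo \(p\) yields the stated congruence.

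For uniqueness of \(x_1\) modulo \(p\), I need \(\tfrac{\partial \phi}{\partial x}(x_0, a_0) \not\equiv 0 \pmod{p}\), equivalently that \(x_0\) is a simple root of \(\phi(x, a_0) \in \mathbb{F}_p[x]\). In the admissible context, \autoref{rmk:minimal_models_for_admissible_tuples} gives \(a_0 \not\equiv 0 \pmod{p}\), so \(\bar{\mathcal{E}}_{a_0} / \mathbb{F}_p\) is a genuine elliptic curve with ordinary reduction and the \(\mathcal{O}_K\)-action descends to it. The specialization \(\phi(x, a_0) \bmod p\) is then the kernel polynomial of the reduction of \([\bar{\pi}]\), whose roots are the \(x\)-coordinates of the non-zero points of \(\ker([\bar{\pi}])\). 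Since \(p \neq 2\), these \(p - 1\) non-zero points pair up into \((p-1)/2\) distinct \(\pm T\) classes, giving \((p-1)/2\) distinct \(x\)-coordinates in \(\mathbb{F}_p\); this matches the degree of \(\phi(x, a_0)\), so every root is simple.

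The Taylor expansion itself is purely mechanical, so the only non-routine step is the uniqueness. The main subtlety is identifying \(\phi(x, a_0) \bmod p\) as the kernel polynomial of an actual endomorphism on a smooth reduction, which rests on admissibility (so that \(a_0 \not\equiv 0\)) together with the descent of CM to the reduction via Deuring's theorem.
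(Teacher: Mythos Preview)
Your proposal is correct and follows essentially the same route as the paper: a two-variable Taylor expansion of \(\phi\) around \((x_0,a_0)\), reduction modulo \(p^2\), and division by \(p\) to obtain the stated congruence. The only difference is in the justification of uniqueness: the paper simply invokes the distinctness of the \(\bar{\pi}\)-torsion \(x\)-coordinates modulo \(v\), which was recorded just before the lemma as a consequence of \autoref{lem:P_hat_computation}, whereas you re-derive this by identifying \(\phi(x,a_0)\bmod p\) with the kernel polynomial of the reduced endomorphism and counting that the \((p-1)/2\) classes \(\pm T\) in \(\bar{E}[p]\setminus\{O\}\) give \((p-1)/2\) distinct \(x\)-coordinates. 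Both arguments are valid and amount to the same fact; yours is a bit more self-contained, while the paper's is shorter because the groundwork was already laid.
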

\begin{proof}
    Consider the Taylor expansion of \(\phi\) in powers of \(p\) around the point \((x_0,a_0)\), given by
    \begin{align*}
        \phi(s,t) = \phi(x_0,a_0) + \left((s-x_0) \frac{\partial \phi}{\partial s}(x_0,a_0) + (t-a_0) \frac{\partial \phi}{\partial t}(x_0,a_0)\right)p + O(p^2).
    \end{align*}
    Reducing the equation \(\phi(x,a) = 0\) modulo \(p\), we get that \(\phi(x_0,a_0)\) must be divisible by \(p\), and substituting \((x,a)\) into the above equation gives that 
    \begin{align*}
        \left(\frac{\phi(x_0,a_0)}{p} + x_1\frac{\partial \phi}{\partial s}(x_0,a_0) + a_1 \frac{\partial \phi}{\partial t}(x_0,a_0)\right)p \equiv 0 \pmod{p^2},
    \end{align*}
    which is equivalent to the desired condition. That \(x_1\) is uniquely characterized by this equation is guaranteed by \(\frac{\partial \phi}{\partial s}(x_0,a_0)\) being non-zero modulo \(p\), which holds by distinctness of the \(\bar{\pi}\)-torsion \(x\)-coordinates modulo \(v\).
\end{proof}

\subsection{Local non-triviality criteria}

Once again, let \((E, v\mid p)\) be admissible for \(K = \mathbb{Q}(\sqrt{-D})\), and let \(\pi\), \(F\), \(L\), \(A\), \(w\), and \(u\) be as at the beginning of \autoref{sec:non-trivial_symbols}. Write \(E = \mathcal{E}_a\) for some \(a \in \mathcal{O}_K\), where \(\mathcal{E}:y^2 = f_a(x)\) is family of elliptic curves with complex multiplication by \(\mathcal{O}_K\) as given in \cite[Tableau 1]{Joux_Morain_1995}. Let \(\phi\) denote the kernel polynomial of \([\bar{\pi}] \in \End_K(E)\).

We may now use the characterizations of \(E[\bar{\pi}]\) developed in the previous section to give our first main Theorems, which are concrete non-triviality criteria for local symbols of the form discussed at the start of \autoref{sec:non-trivial_symbols}. We will give two such criteria, one for the generic case where \(p = |\bar{E_v}(\mathbb{F}_v)|\) and another for the single case where we have \(p \mid |\bar{E_v}(\mathbb{F}_v)|\) without equality.

\begin{theorem}\label{thm:no_inertia_nontriviality}
    Suppose that \(p = |\bar{E_v}(\mathbb{F}_v)|\), and write \(a = a_0 + a_1p + O(p^2)\) in \(\mathcal{O}_v\). Let \(P \in E_u(F_u)\). 
    \begin{enumerate}
        \item If \(u(x(P)) < 0\), then \(\{A_w,P_w\}_{L_w/L_w} \neq 0 \pmod{p}\) if and only if \(u(x(P))=-2\).
        \item If \(u(x(P)) \geq 0\), write \(x(P) = b_0 + b_1p + O(p^2) \in \mathcal{O}_u\). Then \(\{A_w,P_w\}_{L_w/L_w} \neq 0 \pmod{p}\) if and only if
        \begin{align}\label{eqn:Taylor_expansion_criterion_for_nontriviality}
            \frac{\phi(b_0, a_0)}{p} + b_1 \frac{\partial \phi}{\partial x}(b_0,a_0) + a_1 \frac{\partial \phi}{\partial a}(b_0,a_0) \not\equiv 0 \pmod{p}.
        \end{align}
    \end{enumerate}
\end{theorem}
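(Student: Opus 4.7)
The plan is to reduce the statement, via Proposition \ref{prop:nontriviality_via_valuations} (recalling that the hypothesis $v$ splits completely in $F/K$ is in force from the start of Section \ref{sec:non-trivial_symbols}), to the purely valuative assertion that $u(\widehat{P}) = 1$. After that reduction, the two cases of the theorem correspond exactly to the two cases of Lemma \ref{lem:P_hat_computation}, and the task is to translate the valuation condition into the explicit arithmetic criteria on the coordinates of $P$.

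For part (1), when $u(x(P)) < 0$ the point $P$ already lies in $\widehat{E}(\mathfrak{m}_u)$, so $\widehat{P} = P$ and the formal-group valuation satisfies $u(\widehat{P}) = -u(x(P))/2$. Thus $u(\widehat{P}) = 1$ if and only if $u(x(P)) = -2$, giving (1) immediately.

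For part (2), since $|\bar{E}_v(\mathbb{F}_v)| = p$ we have $d = 1$ in Lemma \ref{lem:P_hat_computation}, so there is a unique $T \in E[\bar{\pi}]$ with $u(x(P) - x(T)) \geq 1$ and $u(y(P) - y(T)) = 0$, and that lemma gives $u(\widehat{P}) = u(x(P) - x(T))$. Because $v$ splits completely in $F/K$ we have $F_u = \mathbb{Q}_p$, and because $E[\bar{\pi}]$ maps isomorphically onto $\bar{E}[p]$ via reduction (cf.\ the splitting in Lemma \ref{lem:ses_for_p_(co)torsion}), the $x$-coordinates of points in $E[\bar{\pi}]$ lie in $\mathcal{O}_u = \mathbb{Z}_p$ and are roots of $\phi(x,a)$. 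Writing $x(P) = b_0 + b_1 p + O(p^2)$ and $x(T) = x_0 + x_1 p + O(p^2)$, the condition $u(x(P) - x(T)) \geq 1$ forces $b_0 \equiv x_0 \pmod p$, and we need to decide when this congruence is strict, i.e.\ when $b_1 \not\equiv x_1 \pmod p$.

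To convert this into the stated formula I invoke Lemma \ref{lem:pi_bar_torsion_coeff_relation}, which characterizes $x_1$ modulo $p$ as the unique solution of
\[
    \frac{\phi(x_0,a_0)}{p} + x_1 \frac{\partial \phi}{\partial x}(x_0,a_0) + a_1 \frac{\partial \phi}{\partial a}(x_0,a_0) \equiv 0 \pmod p.
\]
Substituting $b_0$ for $x_0$ (legal since $b_0 \equiv x_0 \pmod p$) and subtracting, $b_1 \not\equiv x_1 \pmod p$ is equivalent to (\ref{eqn:Taylor_expansion_criterion_for_nontriviality}), where one uses the nonvanishing of $\partial \phi/\partial x(x_0, a_0) \pmod p$ — itself a consequence of the roots of $\phi(-,a_0)$ being distinct modulo $v$, as noted in the discussion following Lemma \ref{lem:pi_bar_kernel_poly_facts}. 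The only real subtlety, and what I would check most carefully, is the implicit fact that whenever $u(x(P)) \geq 0$ the reduction $b_0$ must be a root of $\phi(-,a_0) \pmod p$, so that the term $\phi(b_0,a_0)/p$ in (\ref{eqn:Taylor_expansion_criterion_for_nontriviality}) is well-defined as an element of $\mathbb{Z}_p$; this follows because every nonzero point of $\bar{E}(\mathbb{F}_v)$ is $p$-torsion (order $p$) and thus lifts to the $\bar{\pi}$-component of the splitting, forcing $\phi(b_0,a_0) \equiv 0 \pmod p$.
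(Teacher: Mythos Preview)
Your proof is correct and follows essentially the same route as the paper's: reduce via Proposition~\ref{prop:nontriviality_via_valuations} to the condition $u(\widehat{P})=1$, handle the formal case $u(x(P))<0$ directly, and in the integral case invoke Lemma~\ref{lem:P_hat_computation} (with $d=1$) together with Lemma~\ref{lem:pi_bar_torsion_coeff_relation} to turn $u(x(P)-x(T))=1$ into the Taylor-expansion criterion. You have in fact spelled out a couple of points the paper leaves implicit, namely the substitution $x_0 \leadsto b_0$ and the well-definedness of $\phi(b_0,a_0)/p$; both are handled correctly.
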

\begin{proof}
    By \autoref{prop:nontriviality_via_valuations}, \(\{A_w, P_w\}_{L_w/L_w} \neq 0 \pmod{p}\) if and only if \(u(\widehat{P_u}) = 1\). If \(u(x(P)) < 0\), then \(P = \widehat{P}\) and \(u(\widehat{P}) = \frac{u(x(P))}{-2}\). Otherwise, \autoref{lem:P_hat_computation} provides us a unique non-zero \(T \in E[\bar{\pi}]\) such that \(u(\widehat{P}) = u(x(P) - x(T)) \geq 1\). We may then write \(x(T) = b_0 + x_1p + O(p^2)\), and \autoref{lem:pi_bar_torsion_coeff_relation} gives that \(u(x(P) - x(T)) > 1\) if and only if Equation (\ref{eqn:Taylor_expansion_criterion_for_nontriviality}) holds, as desired.
\end{proof}

The accompanying \href{https://github.com/mwills758/locally_non-trivial_cycles/}{Sage notebook} computes the $p$-adic expansions to order $O(p^2)$ of the $x$-coordinates appearing in $E[\bar{\pi}]$ in terms of $a$; that is, the values of $b_0$ and $b_1$ which satisfy Equation (\ref{eqn:Taylor_expansion_criterion_for_nontriviality}). This allows one to easily identify whether a point $P$ will give rise to a global zero-cycle which is locally non-trivial modulo $p$.

A similar result holds for the case where \(D = 1\) and \(p=5\). 
Some additional complications are introduced by the presence of additional torsion points, but we are able to give a concrete criterion as follows.

\begin{theorem}\label{thm:D_equals_1_nontriviality_criterion}
    Suppose that \(p \mid |\bar{E_v}(\mathbb{F}_v)|\) without equality, so \(K = \mathbb{Q}(i)\), \(p=5\), \(E\) is given by \(y^2 = x^3 + ax\), and \(|\bar{E_v}(\mathbb{F}_v)| = 10\). Let \(P \in E_u(F_u)\).
    \begin{enumerate}
        \item If \(u(x(P)) <0\), then \(\{A_w,P_w\}_{L_w/L_w} \neq 0 \pmod{p}\) if and only if \(u(x(P)) = -2\).
        \item If \(u(x(P)) = 0\), write \(a = 3 + a_1p + O(p^2)\) and \(x(P) = b_0 + b_1p + O(p^2)\) in \(\mathcal{O}_u\) with \(b_0 \in \{1,\hdots, 4\}\). Then \(\{A_w,P_w\}_{L_w/L_w} \neq 0 \pmod{p}\) if and only if \(b_1 \not\equiv b_0a_1 + \varepsilon(b_0) \pmod{5}\), where \(\varepsilon\) is defined by
        \begin{align*}
            \begin{array}{c|cccc}
                b_0 & 1 & 2 & 3 & 4 \\ \hline
                \varepsilon(b_0) & 3 & 4 & 3 & 1. \\
            \end{array}
        \end{align*}    
        \item If \(u(x(P)) > 0\), then \(\{A_w,P_w\}_{L_w/L_w} \neq 0 \pmod{p}\) if and only if \(u(x(P)) = 2\).
    \end{enumerate}
\end{theorem}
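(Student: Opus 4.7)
The plan is to apply \autoref{prop:nontriviality_via_valuations}, which reduces the nontriviality of $\{A_w, P_w\}_{L_w/L_w}$ modulo $p$ to the equality $u(\widehat{P}) = 1$. Since $|\bar{E_v}(\mathbb{F}_v)| = 10$, \autoref{lem:P_hat_computation} applies with $d = 2$ and $n = 1$; as $\gcd(d,p) = 1$, \autoref{rmk:point_valuations_same_after_coprime_to_p_scaling} gives $u(\widehat{P}) = u(\widehat{2P})$, reducing the problem to computing the valuation of $\widehat{2P}$. The three cases of the theorem are distinguished by the reduction $\bar{P}$: $\bar{P} = O$ in case (1), $\bar{P} = (0,0)$ (the unique affine $2$-torsion point of $\bar{E_v}$) in case (3), and $\bar{P}$ non-identity and not $2$-torsion in case (2).

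Cases (1) and (3) are direct. In case (1), $P$ already lies in the formal group, so $\widehat{P} = P$ and $u(\widehat{P}) = -u(x(P))/2$, forcing $u(x(P)) = -2$. In case (3), the duplication formula $x(2P) = (x(P)^2 - a)^2/(4 x(P)(x(P)^2 + a))$ combined with $u(a) = 0$ gives $u(x(2P)) = -u(x(P))$; since $\bar{2P} = O$, the point $2P$ lies in the formal group and $\widehat{2P} = 2P$, so $u(\widehat{2P}) = u(x(P))/2$ and nontriviality requires $u(x(P)) = 2$. Note that $u(x(P))$ is automatically even in case (3) from $y^2 = x^3 + ax$ with $a$ a unit.

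Case (2) is the technical heart. Here $2\bar{P} \in \bar{E_v}[5]$ is nonzero, so $x(2P) \bmod p$ matches the $x$-coordinate of some nontrivial element of $E[\bar\pi]$, and \autoref{lem:P_hat_computation} identifies a unique $T \in E[\bar\pi]$ (chosen among the two candidates with this $x$-coordinate by the condition $u(y(2P) - y(T)) = 0$) satisfying $u(\widehat{2P}) = u(x(2P) - x(T))$. One derives the kernel polynomial $\Phi_{\bar\pi}(x, a) = \bar\pi x^2 - ia$ directly from the equations $x(2P) = -x(P)$ and $y(2P) = i y(P)$ cutting out $\bar\pi$-torsion, and uses \autoref{lem:pi_bar_torsion_coeff_relation} to expand $x(T)$ modulo $p^2$ in terms of $a_1$. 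In parallel, one expands $x(2P)$ modulo $p^2$ via the duplication formula in terms of $b_0$, $b_1$, $a_1$. Since $u(x(2P) - x(T)) = 1$ precisely when these expansions agree modulo $p$ but not modulo $p^2$, matching the two produces, for each $b_0 \in \{1,2,3,4\}$, the required linear congruence $b_1 \equiv b_0 a_1 + \varepsilon(b_0) \pmod 5$ characterizing triviality.

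The main obstacle is the bookkeeping for this mod-$p^2$ arithmetic: one must fix consistent embeddings of $i$ and $\bar\pi$ into $\mathbb{Z}_5$ (using that $\pi$ is a uniformizer at $v$), correctly lift each residue class of $2\bar{P}$ to a specific element of $E[\bar\pi]$ via the $[i]$-action $x \mapsto -x$ on $x$-coordinates, and evaluate the resulting Taylor expansions consistently with the sign conventions in \autoref{lem:P_hat_computation}. The verification is mechanical and is carried out in the accompanying Sage notebook, yielding the stated values $\varepsilon(1) = 3$, $\varepsilon(2) = 4$, $\varepsilon(3) = 3$, $\varepsilon(4) = 1$.
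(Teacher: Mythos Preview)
Your proposal is correct and follows the paper's strategy closely: reduce via \autoref{prop:nontriviality_via_valuations} to computing $u(\widehat{P})$, handle cases (1) and (3) by direct valuation arguments on the duplication formula, and handle case (2) via \autoref{lem:P_hat_computation} together with the second-order expansion of the $\bar\pi$-torsion coming from \autoref{lem:pi_bar_torsion_coeff_relation}.

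The one organizational difference is in case (2). You treat all four residues $b_0\in\{1,2,3,4\}$ uniformly by always passing to $2P$ (applying \autoref{lem:P_hat_computation} with $d=2$) and then expanding $x(2P)$ modulo $p^2$ via the duplication formula. The paper instead splits into two subcases: for $b_0\in\{1,4\}$ the reduction $\bar P$ already lies in $\bar E[p]$, so $\widehat{P}=P+T$ with $T\in E[\bar\pi]$ and one may compare $x(P)$ to $x(T)$ directly without doubling; only for $b_0\in\{2,3\}$ does the paper double, and there it exploits the simplification $3b_0^2+a_0\equiv 0\pmod 5$ so that $x(2P)\equiv -2x(P)\pmod 5$, avoiding a full mod-$p^2$ expansion of the duplication formula. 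Your uniform approach is conceptually cleaner; the paper's split saves a little arithmetic. Your derivation of $\Phi_{\bar\pi}$ from the explicit CM action $[i](x,y)=(-x,iy)$ is also a nice replacement for the paper's Sage computation.
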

\begin{proof}
    The case of \(u(x(P)) < 0\) is identical to that in the proof of \autoref{thm:no_inertia_nontriviality}.

    If \(u(x(P)) > 0\), then \(P\) restricts to the 2-torsion point \((0,0) \in \bar{E_u}(\mathbb{F}_u)\), so \(2P = \widehat{2P} \in \widehat{E_u}(\mathfrak{m}_u)\); note that \(u(\widehat{2P}) = u(\widehat{P})\) by \autoref{rmk:point_valuations_same_after_coprime_to_p_scaling}. Now, \(a\equiv 3 \pmod{p}\) implies that \(2u(y(P)) = u(x(P))\) by taking valuations in the equation defining \(E\), and doing the same to the point-doubling formula gives that \(u(x(2P))= -u(x(P))\), from which the claim follows.

    Now assume that \(u(x(P)) = 0\). Without loss of generality, assume that \(\pi\) is of the form \(\pi = 2 \pm i\). Using Sage, we compute \(\phi = \phi_{\bar{\pi}} = \bar{\pi}x^2 \mp ai\). Plugging this into \autoref{lem:pi_bar_torsion_coeff_relation} and simplifying, we see that if a point \(Q\) is such that \(\widehat{Q} = Q +T\) for some non-zero \(T\in E[\bar{\pi}]\), then \(\{A_w,Q_w\}_{L_w/L_w} = 0 \pmod{p}\) if and only if
    \begin{align*}
        \frac{\bar{\pi}c_0^2 \mp 3i}{p} - 2c_1c_0 + 2a_1 \equiv 0 \pmod{p},
    \end{align*}
    where \(x(Q) = c_0 + c_1p + O(p^2)\). (Note that the ambiguity in sign in the first term is resolved by the relation \(i \equiv \mp 2 \pmod{p}\) in \(\mathcal{O}_u\).) Since the roots of \(\phi\) in \(F_u\) are equivalent to either 1 or 4 modulo \(p\), we see that if \(b_0 = 1,4\) we may apply this criterion directly, solving for \(b_1\) as
    \begin{align*}
        b_1 \equiv b_0a_1 + \frac{b_0(\bar{\pi}b_0^2 \mp 3i)}{2p} \pmod{p}
    \end{align*}
    (note that both of these values of \(b_0\) are their own inverses modulo \(p\)).
    
    In the case that \(b_0 = 2,3\), \autoref{lem:P_hat_computation} provides that such a \(T\) exists for \(2P\), and we compute
    \begin{align*}
        x(2P) = \left(\frac{3x(P)^2 + a}{2y(P)}\right)^2 - 2x(P) \equiv -2x(P) \pmod{p};
    \end{align*}
    since \(a\equiv 3 \pmod{p}\), the numerator of the first term vanishes modulo \(p\). Now, the above equivalence simplifies to
    \begin{align*}
        b_1 \equiv b_0a_1 + \frac{b_0(4\bar{\pi}b_0^2 \mp 3i)}{4p} \pmod{p},
    \end{align*}
    and evaluating the final term of this and our previous equivalence at \(b_0\) defines the given function \(\varepsilon\).
\end{proof}

\begin{remark}\label{rmk:p_minus_1_on_p_proportion}
    Fix \(p\) and \(K\), and suppose we have a family of tuples \((E_t, v_t \mid p)\) admissible for \(K\) with associated fields \(F_t/K\) in which \(v_t\) splits completely and places \(u_t \mid v_t\) of \(F_t\). It follows from these Theorems that if points among all \(E_t(F_t)\) are sampled randomly in such a way that their \(x\)-coordinates are uniformly distributed modulo \(p^2\) in \(F_{u_t}\) across the possible residues, one should expect that \(\frac{p-1}{p}\) of these points may be used to construct locally non-trivial symbols modulo \(p\) as above. 
    
    While proving this uniformity in any particular case seems daunting, it does offer an explanation for the data collected in \cite[Theorem A.2]{Gazaki_Koutsianas_2024}. There, the authors fixed \(p = 7\) and \(K = \mathbb{Q}(\sqrt{-3})\), considered the family of curves \(E_t : y^2 = x^3 + (-2 + 7t)\) for \(t\in \mathbb{Z}\), fixed uniform choices of \(\pi = \frac{1+3\sqrt{-3}}{2}\) and \(F_t = K\), and sampled all linearly independent points of infinite order from among the \(E_t(\mathbb{Q})\) with \(|t| < 5000\). They found that 86.68\% of these points would give rise to non-trivial local symbols as in \autoref{thm:no_inertia_nontriviality}, which is very near to the \(\frac{6}{7}\approx 85.7\%\) expected.
\end{remark}

\subsection{Applications to na\"ive quadratic points}

Let \((E, v\mid p)\) be admissible for \(K = \mathbb{Q}(\sqrt{-D})\), and let \(\pi\) as before. By \autoref{rmk:minimal_models_for_admissible_tuples}, we may take \(E:y^2 = f(x)\) where \(f(x) = x^3 + Ax + B\) with \(v(A), v(B) = 0\). We wish to apply the results of the previous section specifically to the case in which the extension field \(F\) is constructed by adjoining a na\"ive quadratic point of \(E\) to \(K\), which is to say a point of the form \((b,\sqrt{f(b)})\) for some \(b\in K\). The first step is to determine the conditions on \(b\) under which \(v\) splits in \(F = K(\sqrt{f(b)})/K\).

\begin{lemma}\label{lem:when_v_splits_in_naive_quadratic}~
    \begin{enumerate}
        \item If \(v(b)<0\), then \(v\) splits in \(F/K\) if and only if \(v(b) = 2n\) is even and writing \(b = \frac{b'}{p^{2n}}\) one has that \(b'\) reduces to a square in \(\mathbb{F}_v\).
        \item If \(v(b) \geq 0\) and \(p = |\bar{E_v}(\mathbb{F}_v)|\), let \(\alpha\in \mathbb{F}_v\) the reduction of \(b\). Then the following are equivalent:
        \begin{enumerate}
            \item there exist a unique pair of points \(\pm T \in E_v[\bar{\pi}]\) with \(x(T)\) reducing to \(\alpha\) modulo \(v\),
            \item \(f(\alpha)\) is a non-zero square in \(\mathbb{F}_v\), and
            \item  \(v\) splits in \(F/K\).
        \end{enumerate}
        \item If \(v(b) = 0\) and \(p \mid |\bar{E_v}(\mathbb{F}_v)|\) without equality, then \(v\) always splits in \(F/K\).
        \item If \(v(b) > 0\) and \(p \mid |\bar{E_v}(\mathbb{F}_v)|\) without equality, then \(v\) splits in \(F/K\) if and only if \(v(b)=2n\) is even and writing \(b = b'p^{2n}\) one has that \(b'\) reduces to a non-square in \(\mathbb{F}_v\).
    \end{enumerate}
\end{lemma}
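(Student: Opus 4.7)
Since $p \geq 5$ is odd, $F = K(\sqrt{f(b)})/K$ splits at $v$ iff $f(b) \in K_v^\times$ is a square, i.e.\ iff $v(f(b))$ is even and the leading unit (after dividing by a uniformizer to that power) reduces to a square in $\mathbb{F}_v$. Every one of the four cases then reduces to a Newton-polygon analysis of $f(b) = b^3 + Ab + B$ plus a mod-$v$ square test. Case (1) falls out immediately: for $v(b) < 0$ the cubic term dominates so $v(f(b)) = 3v(b)$, evenness forces $v(b)$ even, and the leading unit reduces to $\bar b'^3$, which is a square iff $\bar b'$ is (cubing is a bijection on $\mathbb{F}_v^\times/(\mathbb{F}_v^\times)^2 \cong \mathbb{Z}/2$).

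The substantive step is Case (2). For (a)$\Leftrightarrow$(b), the plan is to use that in our good-ordinary setting the reduction map induces an isomorphism $E_v[\bar\pi] \xrightarrow{\sim} \bar E_v[p]$ (the étale part of $E_v[p]$), and by hypothesis $\bar E_v[p] = \bar E_v(\mathbb{F}_v)$. If $T_1, T_2 \in E_v[\bar\pi]\setminus\{O\}$ share an $x$-coordinate modulo $v$ then so do their reductions on $\bar E_v$, forcing $\bar T_2 = \pm\bar T_1$; injectivity of reduction then gives $T_2 = \pm T_1$. Hence the $(p-1)/2$ non-identity pairs $\pm T$ reduce to $(p-1)/2$ distinct $x$-coordinates in $\mathbb{F}_v$, and these are precisely the $x$-coordinates of non-identity points of $\bar E_v(\mathbb{F}_v)$. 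Since $|\bar E_v(\mathbb{F}_v)| = p$ is odd there is no non-trivial $\mathbb{F}_v$-rational 2-torsion, so this set coincides with $\{\alpha \in \mathbb{F}_v : f(\alpha) \in (\mathbb{F}_v^\times)^2\}$, giving (a)$\Leftrightarrow$(b). The same no-2-torsion observation yields $f(\alpha) \neq 0$ for every $\alpha \in \mathbb{F}_v$, so $v(f(b)) = 0$ automatically, and Hensel's lemma applied to $Y^2 - f(b)$ gives (b)$\Leftrightarrow$(c).

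Cases (3) and (4) are then direct computations in the lone remaining admissible setting $K = \mathbb{Q}(i)$, $p = 5$, $f(x) = x(x^2+a)$, $a \equiv 3 \pmod{5}$. For (3) with $v(b) = 0$: the non-identity non-2-torsion points of $\bar E_v(\mathbb{F}_5)$ give four distinct non-zero $x$-coordinates, which must be all of $\mathbb{F}_5^\times$, so $f(\alpha) \in (\mathbb{F}_5^\times)^2$ whenever $\alpha \neq 0$, and the Hensel argument of Case (2) yields that $v$ always splits. For (4), $v(b^2+a) = 0$ because $-a \equiv 2 \pmod 5$ is a non-square in $\mathbb{F}_5$, hence $v(f(b)) = v(b)$; evenness forces $v(b) = 2n$, the leading unit of $f(b)$ reduces to $b' a \equiv 3b' \pmod 5$, and since $3$ is a non-square in $\mathbb{F}_5^\times$ this is a square iff $b'$ is a non-square.

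The delicate step is (a)$\Leftrightarrow$(b) of Case (2), where one needs both injectivity of reduction on $E_v[\bar\pi]$ and the absence of 2-torsion in $\bar E_v(\mathbb{F}_v)$ in order to pin down which $\alpha \in \mathbb{F}_v$ arise as reductions of $\bar\pi$-torsion $x$-coordinates; everything else is point-counting on $\bar E_v(\mathbb{F}_v)$, Hensel's lemma, and parity checks on valuations.
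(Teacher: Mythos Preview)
Your proof is correct. Cases (1), (3), and (4) track the paper's argument essentially verbatim (the one oddity is your justification that $v(b^2+a)=0$ in Case (4): since $v(b)>0$ you have $b^2+a\equiv a\equiv 3\not\equiv 0\pmod 5$ immediately, so the non-square observation about $-a$ is unnecessary there, though harmless).

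Case (2) is where you and the paper genuinely diverge. The paper proves the three conditions as a cycle $(a)\Rightarrow(b)\Rightarrow(c)\Rightarrow(a)$, and for the last implication it invokes \autoref{lem:P_hat_computation} (with $d=1$) to produce the unique $T\in E_v[\bar\pi]$ whose $x$-coordinate is congruent to $b$. You instead argue $(a)\Leftrightarrow(b)$ directly by a counting argument: the reduction isomorphism $E_v[\bar\pi]\xrightarrow{\sim}\bar E_v[p]=\bar E_v(\mathbb F_v)$ together with the absence of $2$-torsion forces the $(p-1)/2$ reduced $x$-coordinates of $\bar\pi$-torsion pairs to coincide exactly with $\{\alpha:f(\alpha)\in(\mathbb F_v^\times)^2\}$, and then $(b)\Leftrightarrow(c)$ is straight Hensel. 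Your route is more self-contained (it does not call back to \autoref{lem:P_hat_computation}) and makes the role of the hypothesis $|\bar E_v(\mathbb F_v)|=p$ more transparent; the paper's route has the advantage of reusing machinery already in place and of giving the existence of $T$ constructively rather than by a pigeonhole count. Your Case (3) also replaces the paper's ``check all non-zero residues'' with the same counting idea, which is a nice uniformization.
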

\begin{proof}
    \begin{enumerate}
        \item First, suppose that \(v(b) < 0\). Recall that \(v\) splits in \(F/K\) if and only if \(f(b)= b^3 + Ab + B\) is a square in \(K_v\). Evaluating \(v(f(b)) = 3v(b)\) since \(v(b) < 0\), we see that a necessary condition on \(b\) for \(v\) splitting in \(K(\sqrt{f(b)})/K\) is that \(v(b) = 2n\) is even. Writing \(b = \frac{b'}{p^{2n}}\), we note that \[K(\sqrt{f(b)}) = K(\sqrt{(b')^3 + Ab'p^{2n} + Bp^{3n}}).\] Thus, \(v\) splitting in \(K(\sqrt{f(b)})/K\) is equivalent to 
        \begin{align*}
            t^2 - ((b')^3 + Ab'p^{2n} + Bp^{3n})
        \end{align*}
        splitting into distinct linear factors modulo \(v\). Simplifying, we see that this happens if and only if \((b')^3\) is a square modulo \(v\), which occurs exactly when \(b'\) is.

        \item Now consider the case when \(v(b) \geq 0\) and \(p = |\bar{E_v}(\mathbb{F}_v)|\). Note that (a) immediately implies (b), as for \(T \in E_v[\bar{\pi}] \subseteq E_v(K_v)\) we reduce the relation \(f(x(T)) = y(T)^2\) modulo \(v\). Further, (b) implies (c) by Kummer's Theorem on factorization in Dedekind domains; either \(\sqrt{f(b)} \in K\) already, or the minimal polynomial of \(\sqrt{f(b)}\) is \(t^2 - f(b)\) and splits into distinct linear factors modulo \(v\).

        To see that (c) implies (a), let \(u\) be one of the places of \(F:= K(\sqrt{f(b)})\) lying above \(v\). Since \(v\) splits in this extension, we have equality of local fields \(F_u = K_v\), and so \(|\bar{E_u}(\mathbb{F}_u)| = |\bar{E_v}(\mathbb{F}_v)| = p\). We may then take \(d=1\) in \autoref{lem:P_hat_computation} to get a unique pair of points \(\pm T\in E[\bar{\pi}]\) such that \(u(x(P) - x(T)) = v(x(T) - x(P)) \geq 1\), which implies that \(x(T)\) reduces to \(\alpha\) modulo \(v\).

        \item If \(v(b) = 0\) and \(p \mid |\bar{E_v}(\mathbb{F}_v)|\) without equality, by \autoref{lem:all_cases_for_nontriv_via_vals_propn} we have that \(f(x) = x^3 + ax\) for some \(a\equiv 3\pmod{v}\). Checking all non-zero residues, we see that \(v(b) = 0\) implies that \(f(b)\) reduces to a square modulo \(v\), and so \(v\) splits in \(F/K\).
        
        \item Finally, suppose that \(v(b) > 0\) and \(p \mid |\bar{E_v}(\mathbb{F}_v)|\) without equality. As before, we know that \(f(b) = b^3 + ab\), and so a necessary condition for \(v\) to split in \(F/K\) is to have \(v(f(b)) = v(b)\) be even. Writing \(b = b'p^{2n}\), we see that
        \begin{align*}
            F = K(\sqrt{f(b)}) = K(\sqrt{(b')^3p^{4n} + ab'}),
        \end{align*}
        so \(v\) splits in \(F/K\) exactly when \(ab'\) is a square modulo \(v\). Since \(a\) is not a square modulo \(v\), this is equivalent to having \(b'\) also not a square modulo \(v\). \qedhere
    \end{enumerate}
\end{proof}

Now, fix \(b \in K\) such that \(v\) splits in \(F/K\), and let \(P = (b,\sqrt{f(b)}) \in E(F)\). Let \(L = F(E[\pi])\), fix \(A \in E[\pi]\) non-zero, and let \(w\) a place of \(L\) lying above \(v\). We may apply the nontriviality criteria of the previous section to determine when the symbol \(z = \{A_w, P_w\}_{L_w/L_w}\) is non-trivial modulo \(p\).

\begin{corollary}\label{cor:naive_quadratics_nontriviality}~
    \begin{enumerate}
        \item If \(v(b) < -2\), then \(z = 0\) modulo \(p\).
        \item If \(v(b) = -2\), then \(z\) is non-zero modulo \(p\).
    \end{enumerate}
    If the above hypotheses do not hold, then \(v(b) \geq 0\).
    Write \(b = b_0 + b_1p + O(p^2)\) and \(E:y^2 = f_a(x)\) with \(a = a_0 + a_1p + O(p^2)\).
    \begin{enumerate}
        \item[3.] If \(p = |\bar{E_v}(\mathbb{F}_v)|\), then \(z\) is non-zero modulo \(p\) if and only if Equation (\ref{eqn:Taylor_expansion_criterion_for_nontriviality}) from \autoref{thm:nontriviality_criterion} holds.
        \item[4.] If \(p \mid |\bar{E_v}(\mathbb{F}_v)|\) without equality and \(v(b) = 0\), choose \(b_0\) and \(b_1\) such that \(b_0 \in \{1,\hdots, 4\}\). Then \(z\) is non-zero modulo \(p\) if and only if \(b_1\not\equiv b_0a_1 + \varepsilon(b_0) \pmod{5}\), where \(\varepsilon\) is as in \autoref{thm:D_equals_1_nontriviality_criterion}. If \(v(b) = 2\), then \(z\) is non-zero modulo \(p\). Otherwise, \(v(b) > 2\) and \(z = 0\) modulo \(p\).
    \end{enumerate}
\end{corollary}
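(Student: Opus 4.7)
The plan is to reduce each case of the statement directly to the explicit non-triviality criteria of Theorems \ref{thm:no_inertia_nontriviality} and \ref{thm:D_equals_1_nontriviality_criterion}. The key preliminary observation is that since $v$ splits in $F/K$ by hypothesis, for any place $u$ of $F$ lying above $v$ the natural inclusion $K_v \hookrightarrow F_u$ is an isomorphism of complete local fields. Under this identification the point $P = (b, \sqrt{f(b)}) \in E(F)$ lands in $E_u(F_u) = E_v(K_v)$ with $x$-coordinate equal to $b$; in particular $u(x(P)) = v(b)$, and the $p$-adic expansion of $b$ in $F_u$ coincides with its expansion in $K_v$. This is exactly what is needed to apply the preceding theorems verbatim.

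Granting this, items (1) and (2) follow immediately from part (1) of both Theorem \ref{thm:no_inertia_nontriviality} and Theorem \ref{thm:D_equals_1_nontriviality_criterion}: whenever $u(x(P)) < 0$, non-triviality of $z$ modulo $p$ is equivalent to $u(x(P)) = -2$, which translates here to $v(b) = -2$. For item (3), when $p = |\bar{E_v}(\mathbb{F}_v)|$ and $v(b) \geq 0$, we write $b = b_0 + b_1 p + O(p^2)$ and apply part (2) of Theorem \ref{thm:no_inertia_nontriviality}; the stated condition is precisely the Taylor-expansion relation (\ref{eqn:Taylor_expansion_criterion_for_nontriviality}).

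For item (4), the admissible curve has the form $y^2 = x^3 + ax$ with $a \equiv 3 \pmod{v}$ and $p = 5$. If $v(b) = 0$, we apply part (2) of Theorem \ref{thm:D_equals_1_nontriviality_criterion} with the expansion of $b$, obtaining the asserted criterion in terms of the function $\varepsilon$. If $v(b) > 0$, we apply part (3) of the same theorem, which gives $z \neq 0 \pmod{p}$ exactly when $u(x(P)) = 2$; via $u(x(P)) = v(b)$ this yields non-triviality when $v(b) = 2$ and triviality when $v(b) > 2$. Note that Lemma \ref{lem:when_v_splits_in_naive_quadratic} already restricts which values of $v(b)$ are compatible with the splitting hypothesis on $v$ (for example, it rules out odd positive $v(b)$ in the $D=1$, $p=5$ setting), so no further case analysis is required.

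There is no substantive obstacle to the argument; the proof is essentially bookkeeping once the identification $F_u = K_v$ is made. The only point requiring some care is verifying that the hypotheses of Theorems \ref{thm:no_inertia_nontriviality} and \ref{thm:D_equals_1_nontriviality_criterion} genuinely hold for the quadratic extension $F = K(\sqrt{f(b)})$, and this is exactly the content of the standing assumption that $v$ splits in $F/K$.
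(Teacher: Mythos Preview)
Your proposal is correct and follows exactly the approach the paper intends: the corollary is stated without proof in the paper precisely because it is an immediate specialization of Theorems~\ref{thm:no_inertia_nontriviality} and~\ref{thm:D_equals_1_nontriviality_criterion} to the point $P=(b,\sqrt{f(b)})$ once one uses the identification $F_u=K_v$ coming from the splitting hypothesis. Your remark that Lemma~\ref{lem:when_v_splits_in_naive_quadratic} excludes the odd-valuation cases (so that, e.g., $v(b)\not\in\{-1,1\}$) is the one detail worth making explicit, and you have done so.
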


\section{Infinite families of non-trivial local-to-global principles}\label{sec:linear_independence}

Let \((E, v\mid p)\) be admissible for \(K = \mathbb{Q}(\sqrt{-D})\), and let \(\pi \in \mathcal{O}_K\) an irreducible corresponding to \(v\). The goal of this section is to apply \autoref{cor:naive_quadratics_nontriviality} to construct certain infinite families of extensions \(L/K(E[\pi])\) for which the complex (\ref{eqn:p_primary_F2_complex}) for \(X = (E\times E)_L\) is exact with non-zero middle term. Note that for \(F/K\) of degree 2 in which \(v\) splits completely, \autoref{cor:adelic_F2_structure} tells us that for \(L = F(E[\pi])\) and \(X = (E\times E)_L\), exactness of (\ref{eqn:p_primary_F2_complex}) is shown by finding two global symbols \(w_1,w_2\in K_2(L;E)\), the images of which are \(\mathbb{Z}/p\)-linearly independent in
\begin{align*}
    \limi_n F^2_{\mathbb{A}}(X)/p^n \cong \prod_{w\mid v} K_2(L_w;E_w)/p \cong (\mathbb{Z}/p)^2.
\end{align*}
Our procedure for doing so is as follows. We first look for a point of infinite order \(P\in E(K)\) meeting the criteria of \autoref{thm:no_inertia_nontriviality}; if \(E\) has positive rank over \(K\), then heuristically a point of infinite order should have a high probability (about \(\frac{p-1}{p}\)) of doing so. Once one such point has been found, we may obtain a second by adjoining a na\"ive quadratic point \(Q\) with \(x\)-coordinate meeting the criteria of \autoref{cor:naive_quadratics_nontriviality}. Letting \(F = K(y(Q))\) and \(L = F(E[\pi])\), and fixing a point \(A\in E[\pi]\) we obtain two global symbols
\begin{align*}
    \{A,P\}_{L/L} \quad \text{and} \quad \{A,Q\}_{L/L}
\end{align*}
which are locally non-trivial modulo \(p\) at both places of \(L\) lying above \(v\). As \autoref{cor:naive_quadratics_nontriviality} holds for an open subset of \(\mathcal{O}_K\) (with respect to the topology induced by \(v\)), varying \(x(Q)\) in our construction yields an infinite family of fields \(L\) as desired.

All that remains is to check that the two symbols we construct are in fact \(\mathbb{Z}/p\)-linearly independent.

\begin{proposition}\label{prop:linear_independence_of_symbols}
    Write \(E: y^2 = f(x)\), and let \(F = K(\sqrt{f(b)})\) for some \(b\in \mathcal{O}_K\) satisfying the criteria of \autoref{cor:naive_quadratics_nontriviality}. Assume that \(F\neq K\) and that \(v\) splits in \(F/K\). Let $P\in E(K[\pi])$ such that $P$ pairs non-trivially modulo $p$ with $A$ at some place of $K(E[\pi])$ lying above $v$. Then
    \begin{align*}
        c_P = \Delta(\{A,P\}_{L/L}) \quad \text{and} \quad c_Q = \Delta(\{A,Q\}_{L/L})
    \end{align*}
    are \(\mathbb{Z}/p\)-linearly independent.
\end{proposition}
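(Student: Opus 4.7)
The plan is to exploit the symmetry between the two places of $L$ above $v$ coming from the Galois action of $F/K$, which fixes $P$ and $A$ but flips the sign of $Q$.

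First I would identify the places explicitly. By \autoref{lem:places_of_L_above_p_structure}, the extension $K(E[\pi])/K$ is totally ramified of degree $p-1$ at $v$, so there is a unique place $v'$ of $K(E[\pi])$ above $v$. Since $v$ splits in $F/K$ and $L = F\cdot K(E[\pi])$, there are exactly two places $w_1, w_2$ of $L$ above $v'$, with completions $L_{w_1} \cong L_{w_2} \cong K_v(E[\pi])$. The nontrivial element $\sigma \in \Gal(F/K)$ swaps the two places $u_1, u_2$ of $F$ above $v$, and correspondingly swaps $w_1$ and $w_2$. Fix the natural identification $L_{w_1}\cong L_{w_2}$ via both copies of $K_v(E[\pi])$.

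Next I would compute the two components of each $c_P$ and $c_Q$. Because $P, A \in E(K(E[\pi]))$, their images under both completions $K(E[\pi])\hookrightarrow L_{w_i}$ coincide, so under our identification
\[
    c_P = \Delta(\{A,P\}_{L/L}) = (c, c) \in K_2(L_{w_1};E)/p \oplus K_2(L_{w_2};E)/p,
\]
where $c$ is the local symbol produced by the common localization. For $Q = (b,\sqrt{f(b)})$, the Galois automorphism $\sigma$ fixes $b$ but negates $\sqrt{f(b)}$, so under the swap of places we have $Q_{w_2} = -Q_{w_1}$. Using bilinearity of the Somekawa symbol (so that $\{A,-Q\} = -\{A,Q\}$), I obtain
\[
    c_Q = \Delta(\{A,Q\}_{L/L}) = (c', -c') \in K_2(L_{w_1};E)/p \oplus K_2(L_{w_2};E)/p
\]
for some $c'$.

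Third, I would invoke non-triviality: by hypothesis $P$ pairs non-trivially with $A$ at the relevant place of $K(E[\pi])$, so $c\neq 0$; by the hypothesis that $b$ satisfies \autoref{cor:naive_quadratics_nontriviality}, we have $c'\neq 0$. Finally, in $(\mathbb{Z}/p)^2$ the vectors $(c,c)$ and $(c',-c')$ have determinant $-2cc'$, which is nonzero mod $p$ since $p\geq 5$ and $c,c'\neq 0$; hence $c_P$ and $c_Q$ are $\mathbb{Z}/p$-linearly independent.

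The main obstacle I anticipate is justifying the identity $Q_{w_2} = -Q_{w_1}$ and hence $\{A,Q\}_{L/L}$ having the claimed antisymmetric image. This requires carefully tracking how the two completions $F\hookrightarrow F_{u_i}\subset L_{w_i}$ differ by $\sigma$, and then invoking $\mathcal{O}_K$-linearity (or at least $\mathbb{Z}$-bilinearity) of the Somekawa symbol to pull the sign out. Everything else is bookkeeping, together with a clean determinant computation that uses $p\neq 2$ in an essential way.
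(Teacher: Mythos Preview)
Your proposal is correct and rests on the same key observation as the paper: under the identification $L_{w_1}\cong L_{w_2}$ coming from the unique place of $K(E[\pi])$ over $v$, the points $A$ and $P$ localize identically at both places while $Q$ flips sign, since the nontrivial $\sigma\in\Gal(F/K)$ sends $(b,\sqrt{f(b)})$ to $(b,-\sqrt{f(b)})$.

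Where you differ is in the endgame. You compute $c_P=(c,c)$ and $c_Q=(c',-c')$ directly in $(\mathbb{Z}/p)^2$ and finish with a determinant $-2cc'\neq 0$. The paper instead argues by contradiction at the level of the filtration $\mathcal{D}^i_{L_w}$ on $\widehat{E}(\mathfrak{m}_{L_w})/p$: assuming $nc_P+c_Q=0$, it uses \autoref{lem:nontriviality_via_valuations} to deduce that both $n\widehat{P_w}+\widehat{Q_w}$ and $n\widehat{P_w}-\widehat{Q_w}$ lie in $\mathcal{D}^p_{L_w}$, then subtracts to force $2\widehat{Q_w}\in\mathcal{D}^p_{L_w}$, contradicting $\widehat{Q_w}\in\mathcal{D}^{p-1}_{L_w}\setminus\mathcal{D}^p_{L_w}$. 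Your route is shorter and uses only the $\mathbb{Z}/p$-structure of $K_2(L_w;E)/p$ together with bilinearity; the paper's route stays closer to the valuation-theoretic machinery developed in \autoref{sec:local_structure}. Both use $p\neq 2$ in exactly the same place.
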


\begin{pf}
Let \(u_1\) and \(u_2\) the places of \(F\) lying above \(v\), and note that since \(v\) splits completely in \(F/K\) we have equality of local fields \(F_{u_1} = K_v = F_{u_2}\). Since \(F/K\) is Galois with \(\Gal(F/K)\) generated by \(\sigma:\sqrt{f(b)}\mapsto -\sqrt{f(b)}\), the embeddings \(\iota_{u_1},\iota_{u_2}:F\hookrightarrow K_v\) corresponding to the places \(u_1\) and \(u_2\) are related by precomposition by \(\sigma\). 

Now let \(w_1\) and \(w_2\) the places of \(L\) lying over \(u_1\) and \(u_2\) respectively. Letting \(\varpi\) the unique place of \(L' = K(E[\pi])\) over \(v\), an analogous argument to the above shows that \(L_{w_1} = L'_{\varpi} = L_{w_2}\), and again the two embeddings \(\iota_{w_1}, \iota_{w_2}:L\hookrightarrow L'_{\varpi}\) are related by precomposition by the \(L'\)-automorphism of \(L\) given by \(\sigma:\sqrt{f(b)}\mapsto -\sqrt{f(b)}\).

Suppose for contradiction that \(c_P\) and \(c_Q\) are not \(\mathbb{Z}/p\)-linearly independent. Since both vectors are assumed non-zero, this is equivalent to the statement that there is some \(n\in (\mathbb{Z}/p)^\times\) such that \(nc_P + c_Q = 0\). Bilinearity of symbols implies that for both places \(w\mid v\) of \(L\), we have
\begin{align*}
    n\curly{A_w, P_w}_{L_w/L_w} + \curly{A_w, Q_w}_{L_w/L_w} = \curly{A_w, nP_w + Q_w}_{L_w/L_w} \equiv 0
\end{align*}
modulo \(p\). Writing the above symbols in \(L'_{\varpi}\), we note that since \(A\) and \(P\) are both defined over \(L'\) we have that \(A_{w_1} = A_{\varpi} = A_{w_2}\) and \(P_{w_1} = P_{\varpi} = P_{w_2}\), whereas \(Q_{w_1}\) and \(Q_{w_2}\) are related by \(\sigma\). Since \(Q = (b, \sqrt{(f(b))})\) we see that \(Q_{w_2} = -Q_{w_1}\) in \(L'_{\varpi}\). Thus, setting \(w = w_1\) we see that \(n\) satisfies
\begin{align*}
    \curly{A_w, nP_w + Q_w}_{L_w/L_w} \text{ and } \curly{A_w, nP_w - Q_w}_{L_w/L_w}
\end{align*}
both being trivial modulo \(p\). Using \autoref{lem:nontriviality_via_valuations}, we see that both \(\widehat{P_w}\) and \(\widehat{Q_w}\) lie in \(\mathcal{D}^{p-1}_{L_w} \setminus \mathcal{D}^{p}_{L_w}\), and that the above conditions on \(n\) imply that both \(n\widehat{P_w} + \widehat{Q_w}\) and \(n\widehat{P_w} - \widehat{Q_w} \) lie in \(\mathcal{D}^{p}_{L_w}\). But since \(\mathcal{D}^{p-1}_{L_w} / \mathcal{D}^{p}_{L_w} \cong \mathbb{F}_w\) has odd characteristic, taking the difference of the above two elements gives a contradiction \(2\widehat{Q_w} \in \mathcal{D}^{p}_{L_w}\).
\end{pf}

Summarizing, we have shown the following:

\begin{theorem}\label{thm:main_theorem_detailed}
    Let \(K\) a quadratic imaginary field, and let \(E/K\) an elliptic curve with complex multiplication by \(\mathcal{O}_K\). Let \(v\mid p\) a place of \(K\) such that \(p \mid |\bar{E_v}(\mathbb{F}_v)|\). Then whenever \(E(K)\) contains a point \(P\) satisfying the hypotheses of \autoref{thm:no_inertia_nontriviality}, there exist infinitely many quadratic extensions \(L/K(E[\pi])\) for which the complex
    \begin{align}
        \limi_n F^2(X)/p^n \overset{\Delta}{\rightarrow} \limi_n F^2_{\mathbb{A}}(X)/p^n \overset{\varepsilon}{\rightarrow} \Hom(\Br(X), \mathbb{Q}/\mathbb{Z})
    \end{align}
    is exact for \(X = (E\times E)_L\), with generators for \(\Im(\Delta)\) explicitly given as above.    
\end{theorem}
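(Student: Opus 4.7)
The plan is to combine the structural reduction of Corollary \ref{cor:adelic_F2_structure} with the local non-triviality criteria of Section \ref{sec:non-trivial_symbols} and the linear independence statement of Proposition \ref{prop:linear_independence_of_symbols}. For any quadratic $F/K$ in which $v$ splits completely, setting $L = F(E[\pi])$ gives an extension $L/K(E[\pi])$ that is automatically quadratic: since $K(E[\pi])/K$ is totally ramified at $v$ (by Lemma \ref{lem:places_of_L_above_p_structure}) whereas $v$ splits in $F/K$, we have $F \not\subseteq K(E[\pi])$. As $[F:K] = 2 < p-1$ and $p \mid |\bar E_v(\mathbb{F}_v)|$, Corollary \ref{cor:adelic_F2_structure} applies: the final term of the complex vanishes, and $\limi_n F^2_{\mathbb{A}}(X)/p^n \cong (\mathbb{Z}/p)^2$. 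Exactness thus reduces to exhibiting two global symbols in $K_2(L;E)$ whose $\Delta$-images are $\mathbb{F}_p$-linearly independent in $(\mathbb{Z}/p)^2$.

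The natural candidates are $z_1 = \{A, P\}_{L/L}$, using the point $P \in E(K)$ supplied by hypothesis (so Theorem \ref{thm:no_inertia_nontriviality} guarantees local non-triviality of $z_1$), and $z_2 = \{A, Q\}_{L/L}$, where $Q = (b, \sqrt{f(b)}) \in E(F)$ for a carefully chosen $b \in \mathcal{O}_K$. By Lemma \ref{lem:when_v_splits_in_naive_quadratic}(2), the splitting condition on $v$ in $F = K(\sqrt{f(b)})/K$ amounts to requiring $b \bmod v$ to coincide with the $x$-coordinate of some nonzero $\bar\pi$-torsion point of $\bar E_v$; there are several such residues, each valid. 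Fixing one such residue $b_0$ and writing $b = b_0 + b_1 p + \cdots$, the Taylor criterion (\ref{eqn:Taylor_expansion_criterion_for_nontriviality}) recorded in Corollary \ref{cor:naive_quadratics_nontriviality}(3) becomes a single open (nonvanishing) condition on $b_1 \bmod p$ ensuring that $z_2$ is locally non-trivial at each place of $L$ above $v$. For any such $b$, Proposition \ref{prop:linear_independence_of_symbols} then delivers the desired $\mathbb{F}_p$-linear independence of $\Delta(z_1)$ and $\Delta(z_2)$, proving exactness of the complex for that single $L$.

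To obtain \emph{infinitely many} non-isomorphic extensions $L/K(E[\pi])$, I would vary the higher-order $v$-adic coefficients of $b$: fix the residues $b_0, b_1$ above and replace $b$ by $b + p^2 c$ for $c \in \mathcal{O}_K$. All local data at $v$ is preserved, so the construction continues to produce $\mathbb{F}_p$-linearly independent symbols; meanwhile, $f(b + p^2 c)$ is a nonconstant cubic polynomial in $c$, and by a Siegel-type argument on the elliptic curve $y^2 = f(b + p^2 c)/\alpha$ it cannot represent only finitely many classes in $K^\times/(K^\times)^2$ as $c$ varies. This yields infinitely many distinct quadratic extensions $F/K$; since only finitely many quadratic $F'/K$ can share a fixed compositum with $K(E[\pi])$, we obtain infinitely many non-isomorphic $L/K(E[\pi])$, each with explicit generators for $\Im(\Delta)$ given by the corresponding $z_1$ and $z_2$. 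The main (mild) obstacle is this last distinctness argument; every other step is a direct invocation of results already proved in the paper.
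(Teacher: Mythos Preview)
Your proposal is correct and follows essentially the same route as the paper: reduce via Corollary~\ref{cor:adelic_F2_structure}, produce the two symbols $\{A,P\}_{L/L}$ and $\{A,Q\}_{L/L}$ using Theorem~\ref{thm:no_inertia_nontriviality} and Corollary~\ref{cor:naive_quadratics_nontriviality}, and invoke Proposition~\ref{prop:linear_independence_of_symbols} for linear independence.

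The one place where you and the paper diverge is the ``infinitely many'' step. The paper's argument is terser: it simply observes that the $b\in\mathcal{O}_K$ with $f(b)\in (K^\times)^2$ are precisely the $\mathcal{O}_K$-integral points of $E$, hence finite by Siegel, so co-finitely many $b$ in the (infinite) $v$-adically open set of admissible parameters give $F\neq K$. Strictly speaking this only yields infinitely many valid $b$, not infinitely many distinct $L$. Your version---applying Siegel to each twist $\alpha y^2 = f(b+p^2c)$ to conclude that $f(b+p^2c)$ hits infinitely many square classes, then noting that only finitely many quadratic $F'/K$ can share a given compositum with $K(E[\pi])$---actually proves the stronger statement the theorem asserts. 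So your ``mild obstacle'' is handled correctly, and in fact your treatment of this point is more complete than the paper's own proof.
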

\begin{proof}
    The last thing to be checked is that there are infinitely many \(b\in K\) satisfying the criteria of \autoref{lem:when_v_splits_in_naive_quadratic} and \autoref{cor:naive_quadratics_nontriviality} for which \(F \neq K\); that is, for which \(f(b)\) is not a square in \(K\). This follows since the \(b\in \mathcal{O}_K\) for which \(f(b) \in K^2\) are exactly the \(\mathcal{O}_K\)-integral points of our minimal Weierstrass model of \(E\), and by \cite[Section II.3]{Siegel_1929} (translated in \cite{Fuchs_2014}) this set is finite.
\end{proof}
\newpage
\begin{example}\label{exm:CM_by_D_equals_43}
    The following example is worked out in detail in the accompanying \href{https://github.com/mwills758/locally_non-trivial_cycles/}{Sage notebook}.
    \samepage

    Let \(E\) be the elliptic curve defined over \(\mathbb{Q}\) by \(y^2=f(x)\) with \(f(x) = x^3-3440x+77658\). \(E\) has rank 1 over \(\mathbb{Q}\), with generator \(P = (129/4, 129/8)\). Note that \(E\) has complex multiplication by \(\mathcal{O}_K\) for \(K = \mathbb{Q}(\sqrt{-43})\) after base changing to \(E_K\).
    
    Let \(p = 11\), and let \(\pi\) an irreducible factor of \(p\) in \(\mathcal{O}_K\). Choose any \(b\in\mathcal{O}_K\) such that \(b \equiv 2 \pmod{\pi^2}\), let \(F = K(\sqrt{f(b)})\), and let \(Q = (b,\sqrt{f(b)}) \in E(F)\). Then for \(L = F(E[\pi])\) and \(X = (E\times E)_L\), the complex
    \begin{align*}
        \limi_n F^2(X)/p^n \overset{\Delta}{\rightarrow} \limi_n F^2_{\mathbb{A}}(X)/p^n \overset{\varepsilon}{\rightarrow} \Hom(\Br(X), \mathbb{Q}/\mathbb{Z})
    \end{align*}
    is exact, with \(\Im(\varepsilon) = \{0\}\) and
    \begin{align*}
        \limi_n F^2_{\mathbb{A}}(X)/p^n \cong (\mathbb{Z}/p)^2
    \end{align*}
    generated by
    \begin{align*}
        \Delta(\{A,P\}_{L/L}) \quad \text{and} \quad \Delta(\{A,Q\}_{L/L}),
    \end{align*}
    where \(A\) is a non-zero element of \(E[\pi]\).
\end{example}

\printbibliography

\end{document}